\providecommand{\algorithmname}{Algorithm}
\numberwithin{equation}{section}
\numberwithin{figure}{section}
\theoremstyle{plain}
\newtheorem{thm}{\protect\theoremname}[section]
\theoremstyle{definition}
\newtheorem{example}[thm]{\protect\examplename}
\theoremstyle{definition}
\newtheorem{defn}[thm]{\protect\definitionname}
\theoremstyle{plain}
\newtheorem{lem}[thm]{\protect\lemmaname}
\theoremstyle{plain}
\newtheorem{lyxalgorithm}[thm]{\protect\algorithmname}
\theoremstyle{remark}
\newtheorem{rem}[thm]{\protect\remarkname}
\theoremstyle{plain}
\newtheorem{prop}[thm]{\protect\propositionname}
\theoremstyle{plain}
\newtheorem{assumption}[thm]{\protect\assumptionname}
\theoremstyle{plain}
\newtheorem{cor}[thm]{\protect\corollaryname}
\newcommand{\dom}{\mbox{\rm dom}}
\newcommand{\epi}{\mbox{\rm epi}\,}
\newcommand{\spn}{\mbox{\rm span}}
\newcommand{\cl}{\mbox{\rm cl}}
\providecommand{\algorithmname}{Algorithm}
\providecommand{\assumptionname}{Assumption}
\providecommand{\corollaryname}{Corollary}
\providecommand{\definitionname}{Definition}
\providecommand{\examplename}{Example}
\providecommand{\lemmaname}{Lemma}
\providecommand{\propositionname}{Proposition}
\providecommand{\remarkname}{Remark}
\providecommand{\theoremname}{Theorem}
\begin{document}
\title[Distributed Dykstra algorithm convergence rates]{Linear and sublinear convergence rates for a subdifferentiable distributed deterministic asynchronous Dykstra's algorithm} 

\subjclass[2010]{68W15, 90C25, 90C30, 65K05}
\begin{abstract}
In \cite{Pang_Dist_Dyk,Pang_sub_Dyk}, we designed a distributed deterministic
asynchronous algorithm for minimizing the sum of subdifferentiable
and proximable functions and a regularizing quadratic on time-varying
graphs based on Dykstra's algorithm, or block coordinate dual ascent.
Each node in the distributed optimization problem is the sum of a
known regularizing quadratic and a function to be minimized. In this
paper, we prove sublinear convergence rates for the general algorithm,
and a linear rate of convergence if the function on each node is smooth
with Lipschitz gradient. Our numerical experiments also verify these
rates.
\end{abstract}

\author{C.H. Jeffrey Pang}

\thanks{C.H.J. Pang acknowledges grant R-146-000-214-112 from the Faculty
of Science, National University of Singapore. }

\curraddr{Department of Mathematics\\ 
National University of Singapore\\ 
Block S17 08-11\\ 
10 Lower Kent Ridge Road\\ 
Singapore 119076 }

\email{matpchj@nus.edu.sg}

\date{\today{}}

\keywords{Distributed optimization, subdifferentiable functions, Dykstra's
algorithm, time-varying graphs}

\maketitle
\tableofcontents{}

\section{Introduction}

Let $V$ and $\bar{E}$ be finite sets. Define the set $\mathbf{X}:=X_{1}\times\cdots\times X_{|V|}$,
where each $X_{i}$ is a finite dimensional Hilbert space. For each
$i\in V$, let $f_{i}:X_{i}\to\mathbb{R}\cup\{\infty\}$ be a closed
convex function, and let $\mathbf{f}_{i}:\mathbf{X}\to\mathbb{R}\cup\{\infty\}$
be defined by $\mathbf{f}_{i}(\mathbf{x})=f_{i}([\mathbf{x}]_{i})$.
Let $\delta_{C}(\cdot)$ be the indicator function for a closed convex
set $C$. For each $\alpha\in\bar{E}$, let $H_{\alpha}\subset\mathbf{X}$
be a linear subspace, and define $\mathbf{f}_{\alpha}:\mathbf{X}\to\mathbb{R}$
by $\mathbf{f}_{\alpha}(\mathbf{x})=\delta_{H_{\alpha}}(\mathbf{x})$.
The problem we consider is 
\begin{equation}
\begin{array}{c}
\underset{\mathbf{x}\in\mathbf{X}}{\min}\frac{1}{2}\|\mathbf{x}-\bar{\mathbf{x}}\|^{2}+\underset{i\in V}{\sum}\mathbf{f}_{i}(\mathbf{x})+\underset{\alpha\in\bar{E}}{\overset{\phantom{\alpha\in\bar{E}}}{\sum}}\delta_{H_{\alpha}}(\mathbf{x}).\end{array}\label{eq:general-framework}
\end{equation}
Note that the last two sums in \eqref{eq:general-framework} can be
written as $\sum_{\alpha\in V\cup\bar{E}}\mathbf{f}_{\alpha}(\mathbf{x})$.
Typically, the hyperplanes $\{H_{\alpha}\}_{\alpha\in\bar{E}}$ are
overdetermined (see Definition \ref{def:connects} later). Partition
the set $V$ as the disjoint union \emph{$V=V_{1}\cup V_{2}\cup V_{3}\cup V_{4}$
}so that 
\begin{itemize}
\item $f_{i}(\cdot)$ are proximable functions for all $i\in V_{1}$.
\item $f_{i}(\cdot)$ are indicator functions of closed convex sets for
all $i\in V_{2}$. 
\item $f_{i}(\cdot)$ are proximable functions such that $\dom(f_{i})=X_{i}$
for all $i\in V_{3}$.
\item \emph{$f_{i}(\cdot)$ }are subdifferentiable functions (i.e., a subgradient
is easy to obtain) such that $\dom(f_{i})=X_{i}$ for all $i\in V_{4}$.
(In Sections \ref{sec:Lin-conv} and \ref{sec:O-1-k-conv-of-smooth-proximable-case},
we shall assume $f_{i}(\cdot)$ are smooth with $\nabla f_{i}(\cdot)$
is Lipschitz with modulus $\frac{1}{\sigma}$ for all $i\in V_{4}$.) 
\end{itemize}
The (Fenchel) dual of \eqref{eq:general-framework} can be found to
be 
\begin{equation}
\max_{\mathbf{z}_{\alpha}\in\mathbf{X}:\alpha\in V\cup\bar{E}}F(\{\mathbf{z}_{\alpha}\}_{\alpha\in\bar{E}\cup V}),\label{eq:general-dual}
\end{equation}
where
\begin{equation}
\begin{array}{c}
F(\{\mathbf{z}_{\alpha}\}_{\alpha\in\bar{E}\cup V}):=-\frac{1}{2}\bigg\|\bar{\mathbf{x}}-\underset{\alpha\in\bar{E}\cup V}{\sum}\mathbf{z}_{\alpha}\bigg\|^{2}+\frac{1}{2}\|\bar{\mathbf{x}}\|^{2}-\underset{\alpha\in\bar{E}\cup V}{\sum}\mathbf{f}_{\alpha}^{*}(\mathbf{z}_{\alpha}).\end{array}\label{eq:h-def}
\end{equation}
We now explain that the problem \eqref{eq:general-framework} includes
the general case of the distributed Dykstra's algorithm in \cite{Pang_Dist_Dyk,Pang_sub_Dyk}.
\begin{example}
\label{exa:distrib-dyk} \cite{Pang_Dist_Dyk,Pang_sub_Dyk}(Distributed
Dykstra's algorithm is a special case of \eqref{eq:general-framework})
Let $G=(V,E)$ be an undirected connected graph. Suppose each $X_{i}=\mathbb{R}^{m}$
for all $i\in V$, and let $\bar{E}:=E\times\{1,\dots,m\}$. For each
$\mathbf{x}\in\mathbf{X}=[\mathbb{R}^{m}]^{|V|}$, we let $[\mathbf{x}]_{i}\in\mathbb{R}^{m}$
be the $i$-th component, and we let $[[\mathbf{x}]_{i}]_{k}$ be
the $k$-th component of $[\mathbf{x}]_{i}$. For each $((i,j),k)\in\bar{E}$,
let the linear subspace $H_{((i,j),k)}\subset\mathbf{X}$ of codimension
1 be defined to be 
\begin{equation}
H_{((i,j),k)}:=\{\mathbf{x}\in\mathbf{X}:[[\mathbf{x}]_{i}]_{k}=[[\mathbf{x}]_{j}]_{k}\}.\label{eq:H-alpha-subspaces}
\end{equation}
Then the problem \eqref{eq:general-framework} is equivalent to 
\begin{equation}
\begin{array}{c}
\underset{x\in\mathbb{R}^{m}}{\min}\underset{i\in V}{\overset{\phantom{i\in V}}{\sum}}[\frac{1}{2}\|x-[\bar{\mathbf{x}}]_{i}\|^{2}+f_{i}(x)].\end{array}\label{eq:distrib-dyk-primal-pblm}
\end{equation}
\end{example}

We elaborate on distributed optimization algorithms and the features
of the distributed Dykstra's algorithm in Example \ref{exa:distrib-dyk}
in the following subsections.

\subsection{Distributed optimization }

Since this paper builds on \cite{Pang_Dist_Dyk,Pang_sub_Dyk}, we
shall give a brief introduction. Our algorithm is for the case when
the edges are undirected. But we remark on the directed case. A notable
paper based on the directed case using the subgradient algorithm is
\cite{EXTRA_Shi_Ling_Wu_Yin}, and surveys are \cite{Nedich_survey}
and \cite{Nedich_talk_2017}. The papers \cite{Nedich_Olshevsky}
and \cite{Nedich_Olshevsky_Shi} further touch on the case of time-varying
graphs. The algorithms in \cite{Bof_Carli_Schenato_2017,Vaidya_Hadji_Domin_2011}
address the averaged consensus problem for the case of directed graphs
with unreliable and reliable communications respectively. Based on
\cite{Bof_Carli_Schenato_2017}, \cite{Notarstefano_gang_Newton_2017}
uses a Newton-Raphson method to design a distributed algorithm for
directed graphs. Naturally, the communication requirements for directed
graphs need to be more stringent that the requirements for undirected
graphs. 

From here on, we discuss only algorithms for undirected graphs. A
product space formulation on the ADMM leads to a distributed algorithm
\cite[Chapter 7]{Boyd_Eckstein_ADMM_review}. Such an algorithm is
decentralized and distributed, but is not asynchronous and so can
get slowed down by slow vertices. An approach based on \cite{Eckstein_Combettes_MAPR}
allows for asynchronous operation, but is not decentralized. 

Moving beyond deterministic algorithms, distributed decentralized
asynchronous algorithms were proposed, but many of them involve some
sort of randomization. For example, the work \cite{Iutzeler_Bianchi_Ciblat_Hachem_1st_paper_dist,Bianchi_Hachem_Iutzeler_2nd_paper_dist,Wei_Ozdaglar_2013}
and the generalization \cite{AROCK_Peng_Xu_Yan_Yin} are based on
monotone operator theory (see for example the textbook \cite{BauschkeCombettes11}),
and require the computations in the nodes to follow specific probability
distributions. 

We now look at asynchronous distributed algorithm with deterministic
convergence (rather than probabilistic convergence). We mention that
incremental aggregated gradient algorithm like \cite{Gurbuzbalaban_Ozdaglar_Parrilo_SIOPT_2017,Aytekin_F_Johansson_2016}
is an algorithm for strongly convex problems that is primal in nature,
so it can't have more than one proximal term, and hence can't handle
more than one constraint set. (We consider such algorithms distinct
from what we do in this paper as they do not need the function on
each node to be strongly convex, but the algorithms need a central
node.) The method in \cite{Aybat_Hamedani_2016} is distributed and
deterministic and the averaging operation can be performed asynchronously,
but some parts of the algorithm are still required to be synchronous.

\subsection{\label{subsec:Dist-Dyk}Distributed Dykstra's algorithm }

We now recall some history of Dykstra's algorithm \cite{Dykstra83}.
Dykstra's algorithm originally solves 
\begin{equation}
\begin{array}{c}
\underset{x\in X}{\min}\frac{1}{2}\|x-\bar{x}\|^{2}+\underset{i\in V}{\overset{\phantom{i\in V}}{\sum}}\delta_{C_{i}}(x)\end{array}\label{eq:orig-Dykstra}
\end{equation}
 for closed convex sets $C_{i}$. It was also recognized to be block
coordinate minimization of the dual problem (similar to \eqref{eq:general-dual})
in \cite{Han88}, where convergence is proved under a constraint qualification
ensuring the existence of a dual minimizer. The convergence of Dykstra's
algorithm to the primal minimizer even when a dual minimizer does
not exist is sometimes known as the Boyle-Dykstra theorem \cite{BD86}.
This proof is written through duality in \cite{Gaffke_Mathar}. Other
interesting properties of Dykstra's algorithm related to this paper
are that Dykstra's algorithm converges even when the functions are
sampled in a non-cyclic order \cite{Hundal-Deutsch-97}.

Based on these previous works, we made a few extensions of Dykstra's
algorithm in \cite{Pang_Dist_Dyk} and further extended it in \cite{Pang_sub_Dyk}.
We call our algorithm the distributed Dykstra's algorithm. Its favorable
properties are:
\begin{enumerate}
\item distributed (with communications occurring only between adjacent agents
$i$ and $j$ connected by an edge). 
\item decentralized (i.e., there is no central node coordinating calculations). 
\item asynchronous (contrast this to synchronous algorithms, where the faster
agents would wait for slower agents before their next calculations). 
\item deterministic (i.e., not using any probabilistic methods, like stochastic
gradient methods).
\item able to incorporate more than one proximable function naturally. (This
largely rules out primal-only methods since they usually allow just
one proximal term.) Hence, the algorithm would be able to allow for
constrained optimization, where the feasible region is the intersection
of several sets.
\item able to allow for time-varying graphs in the sense of \cite{Nedich_Olshevsky,Nedich_Olshevsky_Shi}
(to be robust against failures of communication between two agents).
\item able to use simpler subproblems for subdifferentiable functions. 
\item able to use simpler subproblems for smooth functions.
\item able to allow for partial communication of data.
\end{enumerate}
Since Dykstra's algorithm is also dual block coordinate ascent, the
following property is obtained:
\begin{enumerate}
\item [(10)]choosing a large number of dual variables to be maximized over
gives a greedier increase of the dual objective value. 
\end{enumerate}
We are not aware of other algorithms that satisfy properties 1-5 at
the same time. We note that the approach in \cite{Aytekin_F_Johansson_2016}
is essentially a primal algorithm that allows for one proximal term
(and hence one constrained set). Due to technical difficulties (see
Remark \ref{rem:BBL99}), a dual or primal-dual method seems necessary
to handle the case of more than one constrained set. Algorithms derived
from the primal dual algorithm \cite{Chambolle_Pock_2011_PD,Chambolle_Pock_MAPR},
like \cite{Aybat_Hamedani_2016}, are very much different from what
we study in this paper. The most notable difference is that they study
ergodic convergence rates, which is not directly comparable with our
results. 

\subsubsection{Convergence rates}

Since the subproblems in our case are strongly convex, standard techniques
for block coordinate minimization, like \cite{Beck_Tetruashvili_2013,Beck_alt_min_SIOPT_2015},
can be used to prove the $O(1/k)$ convergence rate when a dual solution
exists and all functions are treated as proximable functions. 

We also showed in \cite{Pang_sub_Dyk} that if $|V|=1$, $|\bar{E}|=0$
and the function $f_{1}(\cdot)$ is subdifferentiable, then a $O(1/k)$
rate of convergence of the dual objective value can be proved with
a method somewhat similar to the bundle method, and it improves to
linear convergence if $f_{1}(\cdot)$ is smooth. (See Lemma \ref{lem:subdiff-decrease}
for more details.) The following question remains: 
\begin{equation}
\begin{array}{l}
\mbox{What is the convergence rate for the subdifferentiable distributed }\\
\mbox{Dykstra's algorithm when }|V|>1\mbox{ and }|\bar{E}|>0\mbox{?}
\end{array}\label{eq:Question}
\end{equation}

A particular case of the original Dykstra's algorithm having a linear
convergence rate is when the sets $C_{i}$ in \eqref{eq:orig-Dykstra}
are linear subspaces. It is well known that in such a case, Dykstra's
algorithm reduces to the method of alternating projections. The case
of alternating projections for the case of linear subspaces is rather
old, so we refer to the references \cite{BB96_survey,Deustch01,Deutsch01_survey,EsRa11}
for example.

\subsection{\label{subsec:Contributions}Contributions of this paper}

Given that the distributed Dykstra's algorithm has some desirable
properties as listed in Subsection \ref{subsec:Dist-Dyk}, we wish
to find out how its convergence rates compare to other well-known
distributed optimization algorithms. 

In Section \ref{sec:Lin-conv}, we prove the linear convergence (of
the dual objective function \eqref{eq:h-def}) of this method when
$V_{1}=V_{2}=V_{3}=\emptyset$ and the functions $f_{i}(\cdot)$ are
smooth for all $i\in V_{4}$ (instead of just being subdifferentiable). 

In Section \ref{sec:O-1-k-conv-of-smooth-proximable-case}, we prove
that in the case when $f_{i}(\cdot)$ is smooth for all $i\in V_{4}$
(instead of just being subdifferentiable), a dual minimizer exists,
the dual iterates are bounded, and $V_{1}$, $V_{2}$ and $V_{3}$
are not necessarily empty, the convergence rate is $O(1/k)$. This
convergence rate is the best we can expect with the distributed Dykstra's
algorithm because block coordinate minimization has a convergence
rate of at best $O(1/k)$ \cite{Beck_Tetruashvili_2013,Beck_alt_min_SIOPT_2015}. 

In Section \ref{sec:Sublin-conv}, we establish a $O(1/k^{1/3})$
convergence rate for the distributed Dykstra's algorithm in the general
case when a dual minimizer exists and the dual iterates are bounded,
addressing the question in \eqref{eq:Question}. When there are no
subdifferentiable functions, the common rate of $O(1/k)$ is obtained.
While the $O(1/k^{1/3})$ rate is slower than the subgradient algorithm,
our experimental results suggest a $O(1/k)$ rate for our set of problems.
And as mentioned, we are not aware of any other distributed optimization
algorithms with properties (1) to (5). Our algorithm is also not easily
comparable to the subgradient algorithm because the subgradient algorithm
does not include problems whose domain is the intersection of more
than one convex set (see the issues for such problems in Remark \ref{rem:BBL99}).
We hope our work can lead to subsequent research for distributed problems.

\subsection{Notation}

The functions $\mathbf{f}_{\alpha}(\cdot)$ are indexed by $\alpha\in V\cup\bar{E}$,
and sometimes $\beta\in V\cup\bar{E}$. We use $\mathbf{f}_{i}(\cdot)$
when we want to index with $i\in V$, and we sometimes use $j\in V$.
We usually reserve bold variables like $\mathbf{x}$, $\mathbf{z}_{\alpha}$
and $\mathbf{s}_{\alpha}$ to be variables in $\mathbf{X}$, but we
also use $\mathbf{z}$, $\mathbf{s}$ for vectors in $\mathbf{X}^{|V\cup\bar{E}|}$.
We use $[\mathbf{x}]_{i}\in X_{i}$ in order to index the $i$-th
component of $\mathbf{x}\in\mathbf{X}$. We usually use $x$, $s$
to represent vectors in $X_{i}$. 

\section{Algorithm statement and preliminaries }

In this section, we list down the preliminaries and description of
the distributed Dykstra's algorithm studied in \cite{Pang_Dist_Dyk,Pang_sub_Dyk}.
We do not claim originality in this section, and we recall some results
useful for subsequent proofs.

For all $n\geq1$ and $w\in\{1,\dots,\bar{w}\}$, define $\mathbf{f}_{\alpha,n,w}:\mathbf{X}\to\mathbb{R}$
by \begin{subequations}\label{eq_m:h-a-n-w} 
\begin{eqnarray}
\mathbf{f}_{\alpha,n,w}(\cdot) & = & \mathbf{f}_{\alpha}(\cdot)\mbox{ for all }\alpha\in[\bar{E}\cup V]\backslash V_{4}\label{eq:h-a-n-w-eq-h-a}\\
\mbox{ and }\mathbf{f}_{\alpha,n,w}(\cdot) & \leq & \mathbf{f}_{\alpha}(\cdot)\mbox{ for all }\alpha\in V_{4}.\label{eq:h-a-n-w-lesser}
\end{eqnarray}
\end{subequations}Define the function $F^{n,w}:\mathbf{X}^{|V\cup\bar{E}|}\to\mathbb{R}\cup\{\infty\}$
to be 
\begin{equation}
\begin{array}{c}
F^{n,w}(\{\mathbf{z}_{\alpha}\}_{\alpha\in\bar{E}\cup V}):=-\frac{1}{2}\bigg\|\bar{\mathbf{x}}-\underset{\alpha\in\bar{E}\cup V}{\sum}\mathbf{z}_{\alpha}\bigg\|^{2}+\frac{1}{2}\|\bar{\mathbf{x}}\|^{2}-\underset{\alpha\in\bar{E}\cup V}{\sum}\mathbf{f}_{\alpha,n,w}^{*}(\mathbf{z}_{\alpha}).\end{array}\label{eq:h-def-1}
\end{equation}
Based on our original motivation in Example \ref{exa:distrib-dyk}
from \cite{Pang_Dist_Dyk,Pang_sub_Dyk}, we make the following definition. 
\begin{defn}
\label{def:connects}Let $D:=\cap_{\alpha\in\bar{E}}H_{\alpha}$.
We say that a subset $E'\subset\bar{E}$ \emph{connects} $V$ if 
\begin{equation}
\cap_{\alpha\in E'}H_{\alpha}=D.\label{eq:intersect-H}
\end{equation}
\end{defn}

Since $H_{\alpha}$ were assumed to be linear subspaces, it is clear
that condition \eqref{eq:intersect-H} on $E'$ is equivalent to 
\begin{equation}
\begin{array}{c}
\underset{\alpha\in E'}{\overset{\phantom{\alpha\in E'}}{\sum}}H_{\alpha}^{\perp}=D^{\perp}.\end{array}\label{eq:D-perp}
\end{equation}
The following simple result does not play much of a role here. But
it plays a role in \cite{Pang_Dist_Dyk,Pang_sub_Dyk} to show the
convergence for time-varying graphs when a dual minimizer may not
exist. This explains line 5 in Algorithm \ref{alg:Ext-Dyk}.
\begin{lem}
\label{lem:express-v}There is a constant $C_{reg}>0$ such that for
any $\mathbf{v}\in D^{\perp}$ and any $E'\subset\bar{E}$ such that
$E'$ connects $V$, we can write $\mathbf{v}=\sum_{\alpha\in E'}\mathbf{v}_{\alpha}$
so that\textbf{ $\mathbf{v}_{\alpha}\in H_{\alpha}^{\perp}$ }and
$\|\mathbf{v}_{\alpha}\|\leq C_{reg}\|\mathbf{v}\|$ for all $\alpha\in E'$. 
\end{lem}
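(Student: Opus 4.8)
The plan is to prove this as a compactness/finiteness statement: the key point is that $C_{reg}$ can be chosen uniformly over the finitely many subsets $E' \subset \bar{E}$ that connect $V$, and for each such $E'$ the existence of a decomposition with a linear bound follows from elementary linear algebra on the (finite-dimensional) space $D^\perp$. Since $\bar{E}$ is a finite set, there are only finitely many subsets $E' \subset \bar{E}$ that connect $V$. If I can produce, for each \emph{fixed} connecting $E'$, a constant $C_{E'}$ that works, then taking $C_{reg} := \max_{E'} C_{E'}$ over this finite collection finishes the proof.

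\emph{First} I would fix a single $E'$ that connects $V$ and establish the bound for it. By the equivalence noted in the excerpt, $E'$ connecting $V$ means $\sum_{\alpha \in E'} H_\alpha^\perp = D^\perp$, i.e. the subspaces $H_\alpha^\perp$ span $D^\perp$. Consider the linear map $T_{E'} : \prod_{\alpha \in E'} H_\alpha^\perp \to D^\perp$ defined by $T_{E'}(\{\mathbf{v}_\alpha\}) = \sum_{\alpha \in E'} \mathbf{v}_\alpha$. The spanning condition says precisely that $T_{E'}$ is surjective. A surjective linear map between finite-dimensional inner product spaces admits a bounded right inverse — concretely, one may take the Moore--Penrose pseudoinverse $T_{E'}^\dagger$, or restrict $T_{E'}$ to the orthogonal complement of its kernel where it is a linear isomorphism and invert there. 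Setting $\{\mathbf{v}_\alpha\} := T_{E'}^\dagger \mathbf{v}$ gives a decomposition $\mathbf{v} = \sum_{\alpha \in E'} \mathbf{v}_\alpha$ with $\mathbf{v}_\alpha \in H_\alpha^\perp$, and the operator norm bound yields $\|\{\mathbf{v}_\alpha\}\| \le \|T_{E'}^\dagger\|\,\|\mathbf{v}\|$, whence each individual $\|\mathbf{v}_\alpha\| \le \|T_{E'}^\dagger\|\,\|\mathbf{v}\|$. So $C_{E'} := \|T_{E'}^\dagger\|$ works for this $E'$.

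\emph{Then} I would take the maximum over all connecting subsets: define
\begin{equation}
C_{reg} := \max\bigl\{\,\|T_{E'}^\dagger\| : E' \subset \bar{E},\ E' \text{ connects } V\,\bigr\}.
\end{equation}
This maximum is over a nonempty finite set (nonempty because $\bar{E}$ itself connects $V$ by definition of $D$), hence is a well-defined finite positive constant, and it simultaneously serves every $E'$. This gives a single $C_{reg}$ independent of both $\mathbf{v}$ and the choice of connecting $E'$, as required.

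I do not anticipate a serious obstacle here; the statement is essentially a packaging of the fact that surjective linear maps on finite-dimensional spaces have bounded right inverses, combined with finiteness of $\bar{E}$. The only point needing a little care is ensuring the constant is genuinely uniform in $E'$ — this is exactly what the finite maximum handles, and it is why the finite-dimensionality (and finiteness of $\bar{E}$) assumptions stated at the very start of the paper are doing real work. One could alternatively avoid naming the pseudoinverse and argue abstractly: each $T_{E'}$ is an isomorphism from $(\ker T_{E'})^\perp$ onto $D^\perp$, so its inverse is a bounded linear operator, and $C_{E'}$ is its norm; the bound on the individual components follows since $\|\mathbf{v}_\alpha\| \le \|\{\mathbf{v}_\beta\}_{\beta \in E'}\|$.
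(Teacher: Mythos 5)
Your proposal is correct and takes essentially the same route as the paper's proof: for each fixed connecting $E'$ one obtains a constant $C_{E'}$ from elementary finite-dimensional linear algebra (the paper extracts a linearly independent spanning subset from bases of the $H_{\alpha}^{\perp}$ and bounds the coordinates, while you package the same fact as the boundedness of a right inverse of the surjective sum map $T_{E'}$), and then the uniform $C_{reg}$ comes from maximizing over the finitely many connecting subsets. There is no gap.
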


\begin{proof}
The proof of a slightly weaker result was in \cite{Pang_Dist_Dyk}
to accommodate the case when $\mathbf{X}$ is not necessarily $[\mathbb{R}^{m}]^{|V|}$,
but we include this proof for completeness. Since we have assumed
finite dimensionality of all $X_{i}$'s, for all $\alpha\in\bar{E}$,
we can write $H_{\alpha}^{\perp}\subset\mathbf{X}$ as $H_{\alpha}^{\perp}:=\spn\{\mathbf{v}_{\alpha,1},\dots,\mathbf{v}_{\alpha,d_{\alpha}}\}$,
where $\{\mathbf{v}_{\alpha,1},\dots,\mathbf{v}_{\alpha,d_{\alpha}}\}$
is a set of $d_{\alpha}$ linearly independent vectors and $d_{\alpha}$
is the dimension of $H_{\alpha}^{\perp}$. For all $E'\subset\bar{E}$,
we can find a subset of the $S':=\cup_{\alpha\in E'}\cup_{i=1}^{d_{\alpha}}\{\mathbf{v}_{\alpha,i}\}$
so that $S'$ is a set of linearly independent vectors that span $D^{\perp}$.
From basic linear algebra, we can find a constant $C_{E'}$ such that
for any $\mathbf{v}\in\bar{H}^{\perp}$, we can write $\mathbf{v}$
as a linear combination of the vectors in $S'$ so that each component
has norm less than $C_{E'}\|\mathbf{v}\|$. The result follows readily. 
\end{proof}
To simplify calculations, we let the vectors $\mathbf{v}_{A}$, $\mathbf{v}_{H}$
and $\mathbf{x}$ in $\mathbf{X}$ be denoted by\begin{subequations}\label{eq_m:all_acronyms}
\begin{eqnarray}
\mathbf{v}_{H} & = & \begin{array}{c}
\underset{\alpha\in\bar{E}}{\overset{\phantom{i\in V}}{\sum}}\mathbf{z}_{\alpha}\end{array}\label{eq:v-H-def}\\
\mathbf{v}_{A} & = & \begin{array}{c}
\mathbf{v}_{H}+\underset{i\in V}{\overset{\phantom{i\in V}}{\sum}}\mathbf{z}_{i}\end{array}\label{eq:from-10}\\
\mathbf{x} & = & \begin{array}{c}
\bar{\mathbf{x}}-\mathbf{v}_{A}.\end{array}\label{eq:x-from-v-A}
\end{eqnarray}
\end{subequations}

We now state Algorithm \vref{alg:Ext-Dyk}. Algorithm \ref{alg:Ext-Dyk}
calls on Algorithm \vref{alg:subdiff-subalg} as a subalgorithm. 

\begin{algorithm}[!h]
\begin{lyxalgorithm}
\label{alg:Ext-Dyk}(Distributed Dykstra's algorithm) Consider the
problem \eqref{eq:general-framework} along with the associated dual
problem \eqref{eq:general-dual}.

Let $\bar{w}$ be a positive integer. Let $C_{reg}>0$ satisfy Lemma
\ref{lem:express-v}. For each $\alpha\in[\bar{E}\cup V]\backslash V_{4}$,
$n\geq1$ and $w\in\{1,\dots,\bar{w}\}$, let $\mathbf{f}_{\alpha,n,w}:\mathbf{X}\to\mathbb{R}$
be as defined in \eqref{eq_m:h-a-n-w}. Our distributed Dykstra's
algorithm is as follows:

01$\quad$Let 

\begin{itemize}
\item $\mathbf{z}_{i}^{1,0}\in\mathbf{X}$ be a starting dual vector for
$\mathbf{f}_{i}(\cdot)$ for each $i\in V$ so that $[\mathbf{z}_{i}^{1,0}]_{j}=0\in X_{j}$
for all $j\in V\backslash\{i\}$. 
\item $\mathbf{v}_{H}^{1,0}\in D^{\perp}$ be a starting dual vector.

\begin{itemize}
\item Note: $\{\mathbf{z}_{\alpha}^{n,0}\}_{\alpha\in\bar{E}}$ is defined
through $\mathbf{v}_{H}^{n,0}$ in \eqref{eq_m:resetted-z-i-j}.
\end{itemize}
\item Let $\mathbf{x}^{1,0}$ be $\mathbf{x}^{1,0}=\bar{\mathbf{x}}-\mathbf{v}_{H}^{1,0}-\sum_{i\in V}\mathbf{z}_{i}^{1,0}$.
\end{itemize}
02$\quad$For each $i\in V_{4}$, let $\mathbf{f}_{i,1,0}:\mathbf{X}\to\mathbb{R}$
be a function such that $\mathbf{f}_{i,1,0}(\cdot)\leq\mathbf{f}_{i}(\cdot)$

03$\quad$For $n=1,2,\dots$

04$\quad$$\quad$\textup{Let $\bar{E}_{n}\subset\bar{E}$ be such
that $\bar{E}_{n}$ connects $V$ in the sense of Definition \ref{def:connects}.}

05$\quad$$\quad$\textup{Define $\{\mathbf{z}_{\alpha}^{n,0}\}_{\alpha\in\bar{E}}$
so that:\begin{subequations}\label{eq_m:resetted-z-i-j} 
\begin{eqnarray}
\mathbf{z}_{\alpha}^{n,0} & = & 0\mbox{ for all }\alpha\notin\bar{E}_{n}\label{eq:reset-z-i-j-1}\\
\mathbf{z}_{\alpha}^{n,0} & \in & H_{\alpha}^{\perp}\mbox{ for all }\alpha\in\bar{E}\label{eq:reset-z-i-j-2}\\
\|\mathbf{z}_{\alpha}^{n,0}\| & \leq & C_{reg}\|\mathbf{v}_{H}^{n,0}\|\mbox{ for all }\alpha\in\bar{E}\label{eq:reset-z-i-j-3}\\
\mbox{ and }\sum_{\alpha\in\bar{E}}\mathbf{z}_{\alpha}^{n,0} & = & \mathbf{v}_{H}^{n,0}.\label{eq:reset-z-i-j-4}
\end{eqnarray}
\end{subequations}}

$\quad$$\quad$\textup{(This is possible by Lemma \ref{lem:express-v}.) }

06$\quad$$\quad$For $w=1,2,\dots,\bar{w}$

07$\quad$$\quad$$\quad$Choose a set $S_{n,w}\subset\bar{E}_{n}\cup V$
such that $S_{n,w}\neq\emptyset$. 

08$\quad$$\quad$$\quad$If $S_{n,w}\subset V_{4}$, then 

09$\quad$$\quad$$\quad$$\quad$Apply Algorithm \ref{alg:subdiff-subalg}.

10$\quad$$\quad$$\quad$else

11$\quad$$\quad$$\quad$$\quad$Set $\mathbf{f}_{i,n,w}(\cdot):=\mathbf{f}_{i,n,w-1}(\cdot)$
for all $i\in V_{4}$.

12$\quad$$\quad$$\quad$$\quad$Define $\{\mathbf{z}_{\alpha}^{n,w}\}_{\alpha\in S_{n,w}}$
by 
\begin{equation}
\begin{array}{c}
\{\mathbf{z}_{\alpha}^{n,w}\}_{\alpha\in S_{n,w}}=\underset{\mathbf{z}_{\alpha},\alpha\in S_{n,w}}{\arg\min}\frac{1}{2}\Big\|\bar{\mathbf{x}}-\underset{\alpha\notin S_{n,w}}{\overset{\phantom{\alpha\notin S_{n,w}}}{\sum}}\mathbf{z}_{\alpha}^{n,w-1}-\underset{\alpha\in S_{n,w}}{\sum}\mathbf{z}_{\alpha}\Big\|^{2}+\underset{\alpha\in S_{n,w}}{\overset{\phantom{\alpha\in S_{n,w}}}{\sum}}\mathbf{f}_{\alpha,n,w}^{*}(\mathbf{z}_{\alpha}).\end{array}\label{eq:Dykstra-min-subpblm}
\end{equation}

13$\quad$$\quad$$\quad$end if 

14$\quad$$\quad$$\quad$Set $\mathbf{z}_{\alpha}^{n,w}:=\mathbf{z}_{\alpha}^{n,w-1}$
for all $\alpha\notin S_{n,w}$.

15$\quad$$\quad$End For 

16$\quad$$\quad$Let $\mathbf{z}_{i}^{n+1,0}=\mathbf{z}_{i}^{n,\bar{w}}$
for all $i\in V$ and $\mathbf{v}_{H}^{n+1,0}=\mathbf{v}_{H}^{n,\bar{w}}=\sum_{\alpha\in\bar{E}}\mathbf{z}_{\alpha}^{n,\bar{w}}$.

17$\quad$$\quad$Let $\mathbf{f}_{i,n+1,0}(\cdot)=\mathbf{f}_{i,n,\bar{w}}(\cdot)$
for all $i\in V_{4}$.

18$\quad$End For 
\end{lyxalgorithm}

\end{algorithm}

\begin{algorithm}[!h]
\begin{lyxalgorithm}
\label{alg:subdiff-subalg}(Subalgorithm for subdifferentiable functions)
This algorithm is run when line 9 of Algorithm \ref{alg:Ext-Dyk}
is reached. Note that to get to this subalgorithm, $S_{n,w}\subset V_{4}$.
Suppose Assumption \ref{assu:to-start-subalg} holds.

01 For each $i\in S_{n,w}$ 

02 $\quad$For $\tilde{f}_{i,n,w-1}:X_{i}\to\mathbb{R}$ defined by
\begin{equation}
\tilde{f}_{i,n,w-1}(x):=f_{i}([\bar{\mathbf{x}}-\mathbf{v}_{H}^{n,w-1}-\mathbf{z}_{i}^{n,w-1}]_{i})+\langle s,x-[\bar{\mathbf{x}}-\mathbf{v}_{H}^{n,w-1}-\mathbf{z}_{i}^{n,w-1}]_{i}\rangle,\label{eq:linearize-f-i-n-w}
\end{equation}

where $s\in\partial f_{i}([\bar{\mathbf{x}}-\mathbf{v}_{H}^{n,w-1}-\mathbf{z}_{i}^{n,w-1}]_{i})$,
consider 
\begin{equation}
\begin{array}{c}
\underset{x\in X_{i}}{\min}\left[\frac{1}{2}\|[\bar{\mathbf{x}}-\mathbf{v}_{H}^{n,w-1}]_{i}-x\|^{2}+\max\{f_{i,n,w-1},\tilde{f}_{i,n,w-1}\}(x)\right],\end{array}\label{eq:alg-primal-subpblm}
\end{equation}

03 $\quad$Let the primal and dual solutions of \eqref{eq:alg-primal-subpblm}
be $x_{i}^{+}$ and $z_{i}^{+}$ 

04 $\quad$Define $f_{i,n,w}:X_{i}\to\mathbb{R}$ to be the affine
function 
\begin{equation}
f_{i,n,w}(x):=f_{i,n,w-1}(x_{i}^{+})+\langle x-x_{i}^{+},[\bar{\mathbf{x}}-\mathbf{v}_{H}^{n,w-1}]_{i}-x_{i}^{+}\rangle.\label{eq:def-f-i-n-w}
\end{equation}

05 $\quad$In other words, $f_{i,n,w}(\cdot)$ is chosen such that
the 

$\qquad\qquad$primal and dual optimizers to \eqref{eq:alg-primal-subpblm}
coincide with that of 
\begin{equation}
\begin{array}{c}
\underset{x\in X_{i}}{\min}\left[\frac{1}{2}\|[\bar{\mathbf{x}}-\mathbf{v}_{H}^{n,w-1}]_{i}-x\|^{2}+f_{i,n,w}(x)\right].\end{array}\label{eq:finw-design}
\end{equation}

06 $\quad$Define the function $\mathbf{f}_{i,n,w}:\mathbf{X}\to\mathbb{R}$
and the dual vector $\mathbf{z}_{i}^{n,w}\in\mathbf{X}$ to be 
\begin{equation}
\mathbf{f}_{i,n,w}(\mathbf{x}):=f_{i,n,w}([\mathbf{x}]_{i})\mbox{ and }[\mathbf{z}_{i}^{n,w}]_{j}:=\begin{cases}
z_{i}^{+} & \mbox{ if }j=i\\
0 & \mbox{ if }j\neq i.
\end{cases}\label{eq:def-z-i}
\end{equation}

07 End for 

08 For all $i\in V_{4}\backslash S_{n,w}$, $\mathbf{f}_{i,n,w}(\cdot)=\mathbf{f}_{i,n,w-1}(\cdot)$.
\end{lyxalgorithm}

\end{algorithm}

\begin{rem}
(Intuition behind Algorithms \ref{alg:Ext-Dyk} and \ref{alg:subdiff-subalg})
(Intuition behind Algorithms \ref{alg:Ext-Dyk} and \ref{alg:subdiff-subalg})
We summarize the intuition behind Algorithms \ref{alg:Ext-Dyk} and
\ref{alg:subdiff-subalg}. Dykstra's algorithm is block coordinate
ascent on the dual \eqref{eq:general-dual}, and this is reflected
in lines 7-14 of Algorithm \ref{alg:Ext-Dyk}. That is, find $\mathbf{z}\in\mathbf{X}^{|\bar{E}\cup V|}$
that tries to improve the objective value of \eqref{eq:h-def}. As
explained in \cite{Pang_Dist_Dyk}, one only needs to keep track of
$\mathbf{x}_{i}$ and $[\mathbf{z}_{i}]_{i}$ for all $i\in V$, and
not all the variables. Line 5 corrects $\{\mathbf{z}_{\alpha}\}_{\alpha\in\bar{E}}$
so that the dual objective value remains the same, and this consideration
is needed when we try to prove that the algorithm works for time-varying
graphs. Lastly, to explain Algorithm \ref{alg:subdiff-subalg}, note
that if $\mathbf{f}_{i}(\cdot)$ is subdifferentiable, then we can
form\textbf{ }affine minorants $\tilde{\mathbf{f}}_{i}(\cdot)$ of
$\mathbf{f}_{i}(\cdot)$ so that $\tilde{\mathbf{f}}_{i}(\cdot)\leq\mathbf{f}_{i}(\cdot)$
and $\tilde{\mathbf{f}}_{i}^{*}(\cdot)\geq\mathbf{f}_{i}^{*}(\cdot)$.
This process is similar to the bundle method. By iteratively updating
$\tilde{\mathbf{f}}_{i}^{*}(\cdot)$ and maximizing 
\[
\begin{array}{c}
-\frac{1}{2}\Big\|\bar{\mathbf{x}}-\underset{\alpha\in\bar{E}_{n}\cup V}{\sum}\mathbf{z}_{\alpha}\Big\|^{2}+\frac{1}{2}\|\bar{\mathbf{x}}\|^{2}-\underset{\alpha\in\bar{E}_{n}}{\sum}\delta_{H_{\alpha}}^{*}(\mathbf{z}_{\alpha})-\underset{i\in V}{\sum}\tilde{\mathbf{f}}_{i}^{*}(\mathbf{z}_{i}),\end{array}
\]
we can converge to the optimal dual objective value. 
\end{rem}

The following result is essential for showing that the distributed
Dykstra's algorithm is asynchronous, and will be useful in some proofs
in this paper.
\begin{prop}
\label{prop:sparsity}(Sparsity of $z_{\alpha}$) We have $[\mathbf{z}_{i}^{n,w}]_{j}=0$
for all $j\in V\backslash\{i\}$, $n\geq1$ and $w\in\{0,1,\dots,\bar{w}\}$.
Furthermore, in the case of Example \ref{exa:distrib-dyk} where $X_{i}=\mathbb{R}^{m}$
for all $i\in V$ and $H_{(e,k)}$ are defined as in \eqref{eq:H-alpha-subspaces}
for all $(e,k)\in E\times\{1,\dots,m\}$, the vector $\mathbf{z}_{(e,k)}^{n,w}\in[\mathbb{R}^{m}]^{|V|}$
satisfies $[[\mathbf{z}_{(e,k)}^{n,w}]_{i'}]_{k'}=0$ unless $k=k'$
and $i$ is an endpoint of $e$.
\end{prop}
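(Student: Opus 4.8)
The plan is to prove both assertions by induction on the iteration index $(n,w)$ taken in lexicographic order, tracking the lines of Algorithms \ref{alg:Ext-Dyk} and \ref{alg:subdiff-subalg} at which each dual vector is defined or redefined. The engine of both arguments is one structural fact about conjugates. If a function $g:\mathbf{X}\to\mathbb{R}\cup\{\infty\}$ has the separable form $g(\mathbf{x})=\tilde g([\mathbf{x}]_i)$ depending only on the $i$-th block, then $g^{*}(\mathbf{z})=\sup_{\mathbf{x}}\big(\sum_{j}\langle[\mathbf{z}]_j,[\mathbf{x}]_j\rangle-\tilde g([\mathbf{x}]_i)\big)$, and whenever $[\mathbf{z}]_j\neq 0$ for some $j\neq i$ we may send $[\mathbf{x}]_j$ to infinity along $[\mathbf{z}]_j$ to force the supremum to $+\infty$, since $\tilde g$ does not see $[\mathbf{x}]_j$. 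Hence $\dom g^{*}\subseteq\{\mathbf{z}:[\mathbf{z}]_j=0\text{ for all }j\neq i\}$. Every vertex function appearing in the subproblems has this form: $\mathbf{f}_i(\mathbf{x})=f_i([\mathbf{x}]_i)$ for $i\notin V_4$, while for $i\in V_4$ line 06 of Algorithm \ref{alg:subdiff-subalg} gives $\mathbf{f}_{i,n,w}(\mathbf{x})=f_{i,n,w}([\mathbf{x}]_i)$.

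For the first assertion, the base case $[\mathbf{z}_i^{1,0}]_j=0$ for $j\neq i$ is line 01. In the inductive step, $\mathbf{z}_i$ can change in only three ways. If $i\notin S_{n,w}$ then line 14 sets $\mathbf{z}_i^{n,w}=\mathbf{z}_i^{n,w-1}$, and line 16 likewise carries the pattern from $\mathbf{z}_i^{n,\bar w}$ to $\mathbf{z}_i^{n+1,0}$, so the invariant is inherited. If $i\in S_{n,w}$ and line 12 runs, then $\mathbf{z}_i^{n,w}$ minimizes an objective containing $\mathbf{f}_{i,n,w}^{*}(\mathbf{z}_i)$, which equals $+\infty$ off the set $\{[\mathbf{z}_i]_j=0,\ j\neq i\}$ by the structural fact; since the remaining quadratic term is finite everywhere, the (unique, by strong convexity) minimizer must lie in that set. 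If instead $i\in V_4$ and Algorithm \ref{alg:subdiff-subalg} is invoked at line 09 (which forces $S_{n,w}\subseteq V_4$), then line 06 of that subalgorithm sets $[\mathbf{z}_i^{n,w}]_j=0$ for $j\neq i$ outright. These cases are exhaustive.

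For the second assertion, specialize to Example \ref{exa:distrib-dyk}. An edge index $\alpha=(e,k)$ with $e=(i,j)$ carries $\mathbf{f}_\alpha=\delta_{H_\alpha}$, and since $\alpha\notin V_4$ we have $\mathbf{f}_{\alpha,n,w}=\mathbf{f}_\alpha$ by \eqref{eq:h-a-n-w-eq-h-a}, so the relevant conjugate is $\mathbf{f}_{\alpha,n,w}^{*}=\delta_{H_\alpha}^{*}=\delta_{H_\alpha^{\perp}}$ for the subspace $H_\alpha$. By \eqref{eq:H-alpha-subspaces}, $H_{(e,k)}$ is cut out by the single equation $[[\mathbf{x}]_i]_k=[[\mathbf{x}]_j]_k$, so $H_{(e,k)}^{\perp}$ is the one-dimensional span of the vector that is $+1$ in coordinate $(i,k)$, $-1$ in coordinate $(j,k)$, and zero elsewhere. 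Consequently the membership $\mathbf{z}_{(e,k)}^{n,w}\in H_{(e,k)}^{\perp}$ is precisely the claimed pattern: $[[\mathbf{z}_{(e,k)}^{n,w}]_{i'}]_{k'}=0$ unless $k'=k$ and $i'\in\{i,j\}$. It thus remains to check this membership at every $(n,w)$, which again follows line by line: \eqref{eq:reset-z-i-j-2} of line 05 puts $\mathbf{z}_\alpha^{n,0}\in H_\alpha^{\perp}$; line 12 places the update in $\dom\mathbf{f}_{\alpha,n,w}^{*}=H_\alpha^{\perp}$ for the same finiteness reason as before; line 14 preserves it; and Algorithm \ref{alg:subdiff-subalg} never alters edge variables since it runs only when $S_{n,w}\subseteq V_4$.

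I expect no conceptual obstacle here. The single point requiring genuine care is the line-12 case, where one must observe that the quadratic coupling term is finite throughout $\mathbf{X}^{|V\cup\bar E|}$, so that the effective domain of the minimization is determined entirely by the conjugate terms, forcing the minimizer into $\bigcap_{\alpha\in S_{n,w}}\dom\mathbf{f}_{\alpha,n,w}^{*}$. The rest is bookkeeping: verifying that lines 11, 14, 16 and line 08 of Algorithm \ref{alg:subdiff-subalg} each carry the two invariants forward unchanged, which I anticipate to be the most tedious but least delicate part.
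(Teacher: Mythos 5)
Your proposal is correct and follows the same route the paper sketches: the vertex claim comes from $\mathbf{f}_{i,n,w}$ depending only on the $i$-th block (so $\dom\mathbf{f}_{i,n,w}^{*}$ is supported there), the edge claim from $\delta_{H_{(e,k)}}^{*}=\delta_{H_{(e,k)}^{\perp}}$ with $H_{(e,k)}^{\perp}$ one-dimensional and supported on coordinates $(i,k)$ and $(j,k)$, all propagated by induction through lines 05, 12, 14, 16 and Algorithm \ref{alg:subdiff-subalg}; the only cosmetic slip is the claim that the minimizer in line 12 is unique by strong convexity (it need not be when $|S_{n,w}|>1$), but your argument only needs that any selected minimizer has finite objective value and hence lies in the effective domains, which holds.
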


\begin{proof}
[Sketch of proof]The proof of this result is similar to the corresponding
result in \cite{Pang_Dist_Dyk}. The claim for $\mathbf{z}_{i}^{n,w}$
relies on the fact that $\mathbf{f}_{i,n,w}(\cdot)$ depends only
on the $i$-th component, and the claim for $\mathbf{z}_{(e,k)}^{n,w}$
relies on the fact that $\mathbf{f}_{(e,k)}(\cdot)=\delta_{H_{(e,k)}^{\perp}}(\cdot)$,
with $H_{(e,k)}^{\perp}$ containing vectors that are zero in all
but 2 coordinates.
\end{proof}
We will come back to the setting of Example \ref{exa:distrib-dyk}
to prove specific results. Another fact we will use later is that
under the setting in Example \ref{exa:distrib-dyk}, the set $D$
defined through \eqref{eq:intersect-H}  has the simplifications\begin{subequations}
\begin{eqnarray}
 &  & \begin{array}{c}
D^{\phantom{\perp}}=\{\mathbf{x}\in[\mathbb{R}^{m}]^{|V|}:\mathbf{x}=(x,x,\dots,x)\mbox{ for some }x\in\mathbb{R}^{m}\}\end{array}\label{eq:D-formula-a}\\
 & \mbox{ and } & \begin{array}{c}
D^{\perp}=\Big\{\mathbf{x}\in[\mathbb{R}^{m}]^{|V|}:\underset{i\in V}{\overset{\phantom{i\in V}}{\sum}}[\mathbf{x}]_{i}=0\Big\}.\end{array}\label{eq:D-formula-b}
\end{eqnarray}
\end{subequations}We state some notation necessary for further discussions.
For any $\alpha\in\bar{E}\cup V$ and $n\in\{1,2,\dots\}$, let $p(n,\alpha)$
be 
\begin{equation}
p(n,\alpha):=\max\{w':w'\leq\bar{w},\alpha\in S_{n,w'}\}.\label{eq:p-n-alpha}
\end{equation}
In other words, $p(n,\alpha)$ is the index $w'$ such that $\alpha\in S_{n,w'}$
but $\alpha\notin S_{n,k}$ for all $k\in\{w'+1,\dots,\bar{w}\}$.
We make three assumptions listed below.
\begin{assumption}
\label{assu:to-start-subalg}(Start of Algorithm \ref{alg:subdiff-subalg})
Recall that at the start of Algorithm \ref{alg:subdiff-subalg}, $S_{n,w}\subset V_{4}$.
We make three assumptions.

\begin{enumerate}
\item Whenever $(n,w)$ is such that $w>1$ and $S_{n,w}\subset V_{4}$
so that Algorithm \ref{alg:subdiff-subalg} is invoked, each $\mathbf{z}_{i}^{n,w-1}\in\mathbf{X}$,
where $i\in V_{4}$, is such that $[\mathbf{z}_{i}^{n,w-1}]_{i}\in X_{i}$
is the optimizer to the problem 
\begin{equation}
\begin{array}{c}
\underset{z\in X_{i}}{\min}\frac{1}{2}\|[\bar{\mathbf{x}}-\mathbf{v}_{H}^{n,w-1}]_{i}-z\|^{2}+f_{i,n,w-1}^{*}(z).\end{array}\label{eq:multi-node-start}
\end{equation}
In other words, suppose $w_{i}\geq1$ is the largest $w'$ such that
$i\in S_{n,w'}$ and $i\notin S_{n,\tilde{w}}$ for all $\tilde{w}\in\{w'+1,w'+2,\dots,w-1\}$.
Then for all $\tilde{w}\in\{w_{i}+1,\dots,w-1\}$, and $\alpha\in S_{n,\tilde{w}}$,
the condition $\mathbf{v}\in H_{\alpha}$ implies $[\mathbf{v}]_{i}=0$.
\item Suppose that for all $i\in V_{4}$, $\tilde{w}\in\{p(n,i)+1,\dots,\bar{w}\}$
and $\alpha\in S_{n,\tilde{w}}$, the condition $\mathbf{v}\in H_{\alpha}$
implies $[\mathbf{v}]_{i}=0$. (This implies $\mathbf{x}_{i}^{n,p(n,i)}=\mathbf{x}_{i}^{n,\bar{w}}$.)
\item Suppose that $S_{n,1}=V_{4}$ for all $n>1$.
\end{enumerate}
\end{assumption}

With these assumptions, we are able to prove the following. Even though
the proof in \cite{Pang_sub_Dyk} for the analogue of Theorem \ref{thm:convergence}
below was for the case of Example \ref{exa:distrib-dyk}, the proofs
can be carried over in a straightforward manner. 
\begin{thm}
\label{thm:convergence} \cite{Pang_sub_Dyk}(Convergence to primal
minimizer) Consider Algorithm \ref{alg:Ext-Dyk}. Assume that the
problem \eqref{eq:general-framework} is feasible, and for all $n\geq1$,
$\bar{E}_{n}=[\cup_{w=1}^{\bar{w}}S_{n,w}]\cap\bar{E}$, and $[\cup_{w=1}^{\bar{w}}S_{n,w}]\supset V$.
Suppose also that Assumption \ref{assu:to-start-subalg} holds.

For the sequence $\{\mathbf{z}_{\alpha}^{n,w}\}_{{1\leq n<\infty\atop 0\leq w\leq\bar{w}}}\subset\mathbf{X}$
for each $\alpha\in\bar{E}\cup V$ generated by Algorithm \ref{alg:Ext-Dyk}
and the sequences $\{v_{H}^{n,w}\}_{{1\leq n<\infty\atop 0\leq w\leq\bar{w}}}\subset\mathbf{X}$
and $\{v_{A}^{n,w}\}_{{1\leq n<\infty\atop 0\leq w\leq\bar{w}}}\subset\mathbf{X}$
thus derived, we have:

\begin{enumerate}
\item [(i)]For all $n\geq1$ and $w_{1},w_{2}\in\{1,\dots,\bar{w}\}$ such
that $w_{1}\leq w_{2}$, 
\[
\begin{array}{c}
F^{n,w_{2}}(\mathbf{z}^{n,w_{2}})\geq F^{n,w_{1}}(\mathbf{z}^{n,w_{1}})+\frac{1}{2}\underset{w'=w_{1}+1}{\overset{w_{2}}{\sum}}\|\mathbf{v}_{A}^{n,w'}-\mathbf{v}_{A}^{n,w'-1}\|^{2}.\end{array}
\]
Hence the sum $\sum_{n=1}^{\infty}\sum_{w=1}^{\bar{w}}\|\mathbf{v}_{A}^{n,w}-\mathbf{v}_{A}^{n,w-1}\|^{2}$
is finite and $\{F^{n,\bar{w}}(\{\mathbf{z}_{\alpha}^{n,\bar{w}}\}_{\alpha\in\bar{E}\cup V})\}_{n=1}^{\infty}$
is nondecreasing.
\item [(ii)]There is a constant $C$ such that $\|\mathbf{v}_{A}^{n,w}\|^{2}\leq C$
for all $n\in\mathbb{N}$ and $w\in\{1,\dots,\bar{w}\}$. 
\item [(iii)]For all $i\in V_{3}\cup V_{4}$, $n\geq1$ and $w\in\{1,\dots,\bar{w}\}$,
the vectors $\mathbf{z}_{i}^{n,w}$ are bounded. 
\end{enumerate}
\end{thm}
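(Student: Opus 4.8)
The plan is to obtain (i) as a block-coordinate ascent estimate for the strongly concave dual $F^{n,w}$, and then to read off (ii) and (iii) from (i) together with weak duality and the sparsity in Proposition \ref{prop:sparsity}. First recall from \eqref{eq:h-def-1} and \eqref{eq:from-10}--\eqref{eq:x-from-v-A} that $F^{n,w}(\mathbf{z}) = -\frac12\|\mathbf{x}\|^2 + \frac12\|\bar{\mathbf{x}}\|^2 - \sum_{\alpha}\mathbf{f}^*_{\alpha,n,w}(\mathbf{z}_\alpha)$ with $\mathbf{x} = \bar{\mathbf{x}} - \mathbf{v}_A$ and $\mathbf{v}_A = \sum_\alpha \mathbf{z}_\alpha$. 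I would then split into the two branches of line 8. In the ``else'' branch, line 11 freezes the minorants, so $F^{n,w} \equiv F^{n,w-1}$ as functions, and \eqref{eq:Dykstra-min-subpblm} is exactly the minimization of $-F^{n,w}$ over the block $\{\mathbf{z}_\alpha\}_{\alpha\in S_{n,w}}$. Since, along the aggregate direction $\mathbf{u}:=\sum_{\alpha\in S_{n,w}}\mathbf{z}_\alpha$, the objective $-F^{n,w}$ is the sum of the $1$-strongly convex quadratic $\frac12\|c-\mathbf{u}\|^2$ and convex conjugate terms, the standard strong-convexity estimate at the minimizer gives $F^{n,w}(\mathbf{z}^{n,w}) \ge F^{n,w}(\mathbf{z}^{n,w-1}) + \frac12\|\mathbf{v}_A^{n,w}-\mathbf{v}_A^{n,w-1}\|^2$, because a change of $\mathbf{u}$ is precisely a change of $\mathbf{v}_A$ (the other blocks being fixed). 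In the subalgorithm branch $S_{n,w}\subset V_4$, the same inequality, now relating $F^{n,w}$ to $F^{n,w-1}$ across the simultaneous minorant update \eqref{eq:def-f-i-n-w}--\eqref{eq:def-z-i}, is exactly the content of Lemma \ref{lem:subdiff-decrease}. Telescoping over $w=w_1+1,\dots,w_2$ then yields the displayed inequality.

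To finish (i), I would pass between outer iterations using that the reset in line 5 preserves the dual value: by \eqref{eq:reset-z-i-j-2} and \eqref{eq:reset-z-i-j-4} it keeps $\mathbf{v}_H$ (hence $\mathbf{v}_A$ and $\mathbf{x}$) fixed while forcing each $\mathbf{z}_\alpha^{n,0}\in H_\alpha^\perp$, on which $\mathbf{f}_\alpha^*$ vanishes, so $F^{n+1,0}(\mathbf{z}^{n+1,0}) = F^{n,\bar w}(\mathbf{z}^{n,\bar w})$; this is where Lemma \ref{lem:express-v} enters. Hence $\{F^{n,\bar w}\}_n$ is nondecreasing. It is bounded above because $\mathbf{f}_{\alpha,n,w}\le \mathbf{f}_\alpha$ from \eqref{eq:h-a-n-w-lesser} gives $F^{n,w}\le F$, while weak duality and feasibility of \eqref{eq:general-framework} bound $F$ by the finite primal optimal value. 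A monotone bounded sequence converges, and telescoping the inequality of (i) bounds $\sum_{n,w}\frac12\|\mathbf{v}_A^{n,w}-\mathbf{v}_A^{n,w-1}\|^2$ by this limit minus the initial value, giving finiteness.

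Parts (ii) and (iii) are then largely mechanical. For (ii), fix a feasible $\mathbf{y}\in\cap_\alpha\dom\mathbf{f}_\alpha$ (it exists by feasibility); then $\mathbf{f}^*_{\alpha,n,w}(\mathbf{z}_\alpha)\ge\mathbf{f}^*_\alpha(\mathbf{z}_\alpha)\ge\langle\mathbf{z}_\alpha,\mathbf{y}\rangle-\mathbf{f}_\alpha(\mathbf{y})$, so summing gives $\sum_\alpha\mathbf{f}^*_{\alpha,n,w}(\mathbf{z}_\alpha)\ge\langle\mathbf{v}_A,\mathbf{y}\rangle-C_{\mathbf{y}}$. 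Substituting this and $F^{n,w}\ge F^{1,0}$ from (i) into the formula for $F^{n,w}$ produces $\frac12\|\bar{\mathbf{x}}-\mathbf{v}_A^{n,w}\|^2\le\frac12\|\bar{\mathbf{x}}\|^2+\|\mathbf{y}\|\,\|\mathbf{v}_A^{n,w}\|+C_{\mathbf{y}}-F^{1,0}$, and the quadratic-versus-linear growth forces $\|\mathbf{v}_A^{n,w}\|\le C$. For (iii), Proposition \ref{prop:sparsity} gives $[\mathbf{z}_j^{n,w-1}]_i=0$ for $j\ne i$, so the linearization point $[\bar{\mathbf{x}}-\mathbf{v}_H^{n,w-1}-\mathbf{z}_i^{n,w-1}]_i$ in \eqref{eq:linearize-f-i-n-w} equals $[\bar{\mathbf{x}}-\mathbf{v}_A^{n,w-1}]_i=[\mathbf{x}^{n,w-1}]_i$, which is bounded by (ii). For $i\in V_3$ the dual solution $z_i^+$ is a subgradient of $f_i$ at the bounded primal point $[\mathbf{x}^{n,w}]_i$, and since $\dom f_i=X_i$ the subdifferential is bounded on bounded sets, so $z_i^+$ is bounded. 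For $i\in V_4$ the dual solution $z_i^+$ is the slope of the affine minorant \eqref{eq:def-f-i-n-w}, which by the optimality conditions for \eqref{eq:alg-primal-subpblm} lies in the convex hull of the previous slope $\mathbf{z}_i^{n,w-1}$ and a subgradient of $f_i$ at the bounded point above; an induction then keeps every slope inside the convex hull of the initial slope and a fixed ball, giving boundedness.

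The \emph{main obstacle} is (i), and specifically the subalgorithm branch: one must verify that updating the minorant $\mathbf{f}_{i,n,w}$ simultaneously with the dual block neither erodes the $\frac12\|\mathbf{v}_A^{n,w}-\mathbf{v}_A^{n,w-1}\|^2$ gain nor lets the new $F^{n,w}$ fall below the old $F^{n,w-1}$ at the previous iterate. This is the bundle-type bookkeeping encapsulated in Lemma \ref{lem:subdiff-decrease}, and the delicate step is checking that its single-node statement transfers to the multi-node setting, using Assumption \ref{assu:to-start-subalg} and Proposition \ref{prop:sparsity} to decouple the components; this is precisely the part the paper describes as carrying over in a straightforward manner from \cite{Pang_sub_Dyk}.
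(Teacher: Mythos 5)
The paper does not actually prove Theorem \ref{thm:convergence}: it is imported from \cite{Pang_sub_Dyk} with the remark that the proof carries over from the setting of Example \ref{exa:distrib-dyk}, so there is no in-paper argument to compare against. Your outline is the standard one that the source follows and is correct in structure: $1$-strong convexity of the quadratic along the aggregate direction $\sum_{\alpha\in S_{n,w}}\mathbf{z}_\alpha$ gives the per-step gain $\frac12\|\mathbf{v}_A^{n,w}-\mathbf{v}_A^{n,w-1}\|^2$ in the else-branch, the reset in line 5 preserves the dual value because $\delta_{H_\alpha}^*=\delta_{H_\alpha^\perp}$ vanishes on $H_\alpha^\perp$ while $\mathbf{v}_H$ is unchanged, weak duality against a feasible point bounds the monotone sequence from above, and (ii)/(iii) follow from the quadratic-versus-linear growth argument and from Proposition \ref{prop:sparsity} plus local boundedness of subdifferentials of finite-valued convex functions exactly as you describe. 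One imprecision worth noting: the ascent inequality in the subalgorithm branch is not literally ``the content of Lemma \ref{lem:subdiff-decrease}'' --- that lemma estimates the decrease of the dual gap $\alpha_i$ and is invoked later for the convergence rates; the inequality you need in (i) instead follows from the same strong-convexity estimate applied to the relaxed subproblem \eqref{eq:alg-primal-subpblm}, using $\max\{f_{i,n,w-1},\tilde f_{i,n,w-1}\}^*\leq f_{i,n,w-1}^*$ at the old iterate together with the requirement that the affine replacement in \eqref{eq:def-f-i-n-w} reproduce the optimal value of that subproblem at $z_i^+$, which is precisely the bundle bookkeeping you correctly single out as the delicate step.
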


We now list down a result that will be useful for showing the decrease
of the dual objective value in terms of $f_{i}([\mathbf{x}^{n,\bar{w}}]_{i})-f_{i,n,\bar{w}}([\mathbf{x}^{n,\bar{w}}]_{i})$. 
\begin{lem}
\label{lem:subdiff-decrease}\cite{Pang_sub_Dyk} Suppose $f:X\to\mathbb{R}$
is a closed convex subdifferentiable function such that $\dom(f)=X$.
Consider the problem 
\begin{equation}
\begin{array}{c}
\underset{x}{\overset{\phantom{x}}{\min}}\,\,f(x)+\frac{1}{2}\|x-\bar{x}\|^{2},\end{array}\label{eq:lemma-primal}
\end{equation}
which has (Fenchel) dual 
\begin{equation}
\begin{array}{c}
\underset{x}{\overset{\phantom{x}}{\max}}\,\,-f^{*}(z)+\frac{1}{2}\|\bar{x}\|-\frac{1}{2}\|z-\bar{x}\|^{2}.\end{array}\label{eq:lemma-dual}
\end{equation}
Strong duality is satisfied for this primal dual pair. Let the common
objective value be $v^{*}$. Let $f_{1}:X\to\mathbb{R}$ be an affine
function $f_{1}(x):=a_{1}^{T}x+b_{1}$ such that $f_{1}(\cdot)\leq f(\cdot)$.
We have $f_{1}^{*}(\cdot)\geq f^{*}(\cdot)$. Let $z_{1}$ be the
maximizer of $\max_{z}-f_{1}^{*}(z)+\frac{1}{2}\|\bar{x}\|^{2}-\frac{1}{2}\|z-\bar{x}\|^{2}$,
and let the corresponding solution to the primal problem $\min_{x}f_{1}(x)+\frac{1}{2}\|x-\bar{x}\|^{2}$
be $x_{1}$. Define $\tilde{f}_{1}:X\to\mathbb{R}$ to be an affine
minorant of $f(\cdot)$ at $x_{1}$, i.e., $\tilde{f}_{1}(x)=f(x)+s_{1}^{T}(x-x_{1})$
for some $s_{1}\in\partial f(x_{1})$. Let $x_{2}$ be the minimizer
to the problem 
\begin{equation}
\begin{array}{c}
\underset{x}{\overset{\phantom{x}}{\min}}\,\,[\max\{f_{1}(x),\tilde{f}_{1}(x)\}+\frac{1}{2}\|x-\bar{x}\|^{2}],\end{array}\label{eq:min-max-pblm}
\end{equation}
and let $z_{2}$ be the dual solution. Let $f_{2}:X\to\mathbb{R}$
be the affine function such that the problem \textup{
\[
\begin{array}{c}
\underset{x}{\overset{\phantom{x}}{\min}}f_{2}(x)+\frac{1}{2}\|x-\bar{x}\|^{2}\end{array}
\]
}has the same primal and dual solutions $x_{2}$ and $z_{2}$. Let
\[
\begin{array}{c}
\alpha_{i}=v^{*}-[-f_{i}^{*}(z_{i})+\frac{1}{2}\|\bar{x}\|^{2}-\frac{1}{2}\|z_{i}-\bar{x}\|^{2}]\mbox{ for }i=1,2.\end{array}
\]
One can see that $\alpha_{i}\geq0$, and $\alpha_{i}$ is the measure
of the gap between the estimate of the dual objective value \eqref{eq:lemma-dual}
and its true value $v^{*}$. We have the following:

\begin{enumerate}
\item Let $L$ be the Lipschitz constant of $f(\cdot)$. Then
\begin{equation}
\begin{array}{c}
\begin{array}{c}
\alpha_{2}\leq\alpha_{1}-\frac{1}{2}t^{2},\mbox{ where }\frac{1}{2(L+1)^{2}}t^{2}+t\geq[f(x_{1})-f_{1}(x_{1})].\end{array}\end{array}\label{eq:target-quad}
\end{equation}
\item If $f(\cdot)$ is smooth and $\nabla f(\cdot)$ is Lipschitz with
constant $L'$, then 
\[
\begin{array}{c}
\frac{1}{4(L'+1)}\left(\frac{\alpha_{2}}{\alpha_{1}}\right)^{2}+\frac{\alpha_{2}}{\alpha_{1}}\leq1.\end{array}
\]
\end{enumerate}
\end{lem}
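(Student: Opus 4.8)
The plan is to pass to the primal picture via strong duality and then analyze, geometrically, how much a single cut lowers the proximal subproblem. Writing $\phi(x)=\frac{1}{2}\|x-\bar{x}\|^{2}$, strong duality for each affine model identifies the bracketed dual quantity in the definition of $\alpha_{i}$ with $v_{i}^{*}:=\min_{x}[f_{i}(x)+\phi(x)]$; since $f_{2}$ is built so that $\min_{x}[f_{2}(x)+\phi(x)]$ shares its primal and dual optimizers with the min--max problem \eqref{eq:min-max-pblm}, $v_{2}^{*}$ is exactly the optimal value of \eqref{eq:min-max-pblm}. Hence $\alpha_{1}-\alpha_{2}=v_{2}^{*}-v_{1}^{*}\ge 0$ is precisely the decrease in the model's optimal value produced by adding the cut $\tilde{f}_{1}$. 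Because $f_{1}$ is affine, $A(x):=f_{1}(x)+\phi(x)=v_{1}^{*}+\frac{1}{2}\|x-x_{1}\|^{2}$ is an exact unit-curvature quadratic centred at $x_{1}=\bar{x}-z_{1}$, and likewise $B(x):=\tilde{f}_{1}(x)+\phi(x)=\tilde{v}_{1}+\frac{1}{2}\|x-\tilde{x}_{1}\|^{2}$ is centred at $\tilde{x}_{1}=\bar{x}-s_{1}$. Since $\tilde{f}_{1}$ is tangent to $f$ at $x_{1}$, the gap $f(x_{1})-f_{1}(x_{1})$ equals $B(x_{1})-A(x_{1})$, and $v_{2}^{*}=\min_{x}\max\{A(x),B(x)\}$.

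First I would record the easy half: if $x_{2}$ minimizes $\max\{A,B\}$, then $v_{2}^{*}=\max\{A,B\}(x_{2})\ge A(x_{2})=v_{1}^{*}+\frac{1}{2}\|x_{2}-x_{1}\|^{2}$, so with $\rho:=\|x_{2}-x_{1}\|$ the decrease satisfies $\alpha_{1}-\alpha_{2}\ge\frac{1}{2}\rho^{2}$; this is where the claimed $\frac{1}{2}t^{2}$ originates. Next I would evaluate $v_{2}^{*}$ itself: the two paraboloids have equal curvature, so in any direction orthogonal to the segment $[x_{1},\tilde{x}_{1}]$ both $A$ and $B$ increase, and the minimization reduces to a one-dimensional search along that segment. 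This yields an explicit formula for the decrease in terms of the gap $f(x_{1})-f_{1}(x_{1})$ and of $D:=\|x_{1}-\tilde{x}_{1}\|=\|z_{1}-s_{1}\|$, with two regimes according to whether the new cut is still active at the minimizer or the minimizer sits at $\tilde{x}_{1}$. To convert this into the stated relation between $t$ and $f(x_{1})-f_{1}(x_{1})$ I would bound the relevant slope $D$ through the Lipschitz data of $f$ (the subgradient $s_{1}$ and the aggregate slope $z_{1}$ are controlled by $L$, while the quadratic $\phi$ supplies the unit curvature responsible for the $+1$) and substitute back into the one-dimensional decrease formula.

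For the smooth case I would replace the crude Lipschitz bound by the descent lemma: with $\nabla f$ Lipschitz of modulus $L'$ one has $f(x)\le\tilde{f}_{1}(x)+\frac{L'}{2}\|x-x_{1}\|^{2}\le\max\{f_{1},\tilde{f}_{1}\}(x)+\frac{L'}{2}\|x-x_{1}\|^{2}$, so evaluating the true objective $f+\phi$ at $x_{2}$ gives $v^{*}\le v_{2}^{*}+\frac{L'}{2}\rho^{2}$, i.e. $\alpha_{2}\le\frac{L'}{2}\rho^{2}$. Combining this upper bound on $\alpha_{2}$ with the lower bound $\alpha_{1}-\alpha_{2}\ge\frac{1}{2}\rho^{2}$ and eliminating $\rho$ yields a quadratic inequality in $\alpha_{2}/\alpha_{1}$ of the stated shape, which drives the linear rate. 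The main obstacle I anticipate is the nonsmooth estimate: without a quadratic majorant of $f$ one has only a subgradient at $x_{1}$, so the passage from the exact one-dimensional decrease to the clean relation $\frac{1}{2(L+1)^{2}}t^{2}+t\ge f(x_{1})-f_{1}(x_{1})$ requires carefully combining the two regimes of the line search with a sharp slope bound of the form $D\lesssim L+1$; the smooth part, by contrast, is essentially the two-line estimate above.
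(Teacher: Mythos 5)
A preliminary remark on the comparison itself: this paper does not prove Lemma \ref{lem:subdiff-decrease} at all --- it is imported from \cite{Pang_sub_Dyk} and used as a black box --- so there is no in-paper argument to measure your proposal against, and I can only assess it on its own terms. Your setup is correct and efficient: the identification $\alpha_i=v^*-v_i^*$ via strong duality, the two unit-curvature quadratics $A=f_1+\phi$ and $B=\tilde f_1+\phi$ centred at $x_1=\bar x-a_1$ and $\tilde x_1=\bar x-s_1$, and the lower bound $\alpha_1-\alpha_2=v_2^*-v_1^*\ge\frac12\|x_2-x_1\|^2$ are all right. Part (2) as you sketch it is complete and correct; combining $\alpha_2\le\frac{L'}{2}\rho^2$ (descent lemma) with $\alpha_1-\alpha_2\ge\frac12\rho^2$ gives $\alpha_2/\alpha_1\le L'/(L'+1)$, which is in fact slightly stronger than the stated quadratic inequality.

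The genuine gap is in part (1), precisely where you flag ``the main obstacle,'' and it is not a matter of bookkeeping the two regimes more carefully: the route you describe cannot deliver the constant $\frac{1}{2(L+1)^2}$. Carry out your one-dimensional reduction along $[x_1,\tilde x_1]$ with $D:=\|x_1-\tilde x_1\|=\|s_1-a_1\|$ and $g:=f(x_1)-f_1(x_1)$. When both cuts are active at $x_2$ (the case $g\le D^2$) one gets exactly $v_2^*-v_1^*=g^2/(2D^2)$ and $\rho=\|x_2-x_1\|=g/D$, so taking $t=\rho$ the requirement $\frac{1}{2(L+1)^2}t^2+t\ge g$ reduces to $\frac{g}{2(L+1)^2D^2}+\frac1D\ge1$, which essentially forces $D\le1$; but the only bound the Lipschitz data supplies is $D\le\|s_1\|+\|a_1\|\le 2L$, and $D$ of order $L$ with $g$ small is attainable. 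Concretely, for $X=\mathbb{R}$, $\bar x=0$, $f(x)=|x|$ (so $L=1$) and $f_1(x)=-x$, one computes $x_1=1$, $g=2$, $x_2=0$, $\alpha_1=\tfrac12$, $\alpha_2=0$, so any admissible $t$ in \eqref{eq:target-quad} must satisfy $t\le1$, while $\tfrac18t^2+t\ge2$ forces $t\ge4(\sqrt2-1)>1$: no $t$ works, so the relation as printed cannot be established by any choice of $t$ here. What your line-search analysis actually proves, after minimizing over $D\in[0,2L]$, is a relation of the shape $t^2+2Lt\ge g$ for $t=\|x_2-x_1\|$, which is weaker than \eqref{eq:target-quad} but still of the form ``$\Delta f_i$ is bounded by a fixed quadratic in $\theta_i$'' and therefore still feeds the recurrence used in Section \ref{sec:Sublin-conv} with only a change of constants. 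So either you must find a genuinely different choice of $t$ (I do not see one, and the example suggests none exists for the statement as printed), or you should prove and use the corrected inequality; as written, your plan for part (1) does not close.
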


Note that $f(x_{1})-f_{1}(x_{1})$ in \eqref{eq:target-quad} plays
the role of $f_{i}([\mathbf{x}^{n,\bar{w}}]_{i})-f_{i,n,\bar{w}}([\mathbf{x}^{n,\bar{w}}]_{i})$
in the proofs in Section \ref{sec:Sublin-conv}.

Recall the definition of $p(\cdot,\cdot)$ in \eqref{eq:p-n-alpha}.
It follows from line 14 in Algorithm \ref{alg:Ext-Dyk} that 
\begin{equation}
\mathbf{z}_{\alpha}^{n,p(n,\alpha)}=\mathbf{z}_{\alpha}^{n,p(n,\alpha)+1}=\cdots=\mathbf{z}_{\alpha}^{n,\bar{w}}\mbox{ for all }\alpha\in\bar{E}\cup V.\label{eq:stagnant-indices}
\end{equation}
Moreover, $\alpha\notin\bar{E}_{n}$ implies $\alpha\notin S_{n,w}$
for all $w\in\{1,\dots,\bar{w}\}$, so 
\begin{equation}
0\overset{\scriptsize\eqref{eq:reset-z-i-j-1}}{=}\mathbf{z}_{\alpha}^{n,0}=\mathbf{z}_{\alpha}^{n,1}=\cdots=\mathbf{z}_{\alpha}^{n,\bar{w}}\mbox{ for all }\alpha\in\bar{E}\backslash\bar{E}_{n}.\label{eq:zero-indices}
\end{equation}

\begin{rem}
(On the condition $S_{n,1}=V_{4}$) Throughout this paper, we assumed
$S_{n,1}=V_{4}$ in Assumption \ref{assu:to-start-subalg}. Algorithm
\ref{alg:Ext-Dyk} with this condition would not be truly asynchronous,
but it is relatively easy to enforce this condition. One way to enforce
this condition is to use a global clock. Another way to enforce this
condition is to use the sparsity of $\mathbf{z}_{\alpha}$ in Proposition
\ref{prop:sparsity}. We limit ourselves to the special case in Example
\ref{exa:distrib-dyk}. Suppose that $\{S_{n,w}\}_{w=1}^{\bar{w}}$
is such that for all $i\in V_{4}$, $S_{n,w_{i}}=\{i\}$ for some
$w_{i}\in\{1,\dots,\bar{w}\}$. Suppose also that for all $i,j\in V_{4}$
such that $w_{i}<w_{j}$:

\begin{itemize}
\item [($\star$)]There are no $(e,k)\in\bar{E}$ such that $i$ and $j$
are the two endpoints of $e$ and $(e,k)\in S_{n,w'}$ for some $w'$
such that $w_{i}<w'<w_{j}$. 
\end{itemize}
If condition ($\star$) holds for some $i,j\in V_{4}$, then the sparsity
of $\mathbf{z}_{\alpha}^{n,w}$ implies that if we changed from $S_{n,w_{i}}=\{i\}$
and $S_{n,w_{j}}=\{j\}$ to $S_{n,w_{i}}=\{i,j\}$ and $S_{n,w_{j}}=\emptyset$,
then the iterates $\{\mathbf{x}^{n,w}\}_{w}$ obtained will remain
equivalent. It is possible to ensure ($\star$) for all $i,j\in V_{4}$
using a signal from a fixed node in $V$ propagated as computations
in the algorithm are carried out. 

\begin{rem}
(The dual objective value) As we have discussed in \cite{Pang_Dist_Dyk,Pang_sub_Dyk},
strong duality holds for the problem \eqref{eq:general-framework}.
In particular, for any primal feasible $\mathbf{x}$ and dual feasible
$\mathbf{z}$, the duality gap satisfies 
\begin{eqnarray}
 &  & \begin{array}{c}
\frac{1}{2}\|\bar{\mathbf{x}}-\mathbf{x}\|^{2}+\underset{\alpha\in\bar{E}\cup V}{\sum}\mathbf{f}_{\alpha}(\mathbf{x})-F(\{\mathbf{z}_{\alpha}\}_{\alpha\in\bar{E}\cup V})\end{array}\label{eq:From-8}\\
 & \overset{\eqref{eq:h-def}}{=} & \begin{array}{c}
\frac{1}{2}\|\bar{\mathbf{x}}-\mathbf{x}\|^{2}+\underset{\alpha\in\bar{E}\cup V}{\sum}[\mathbf{f}_{\alpha}(\mathbf{x})+\mathbf{f}_{\alpha}^{*}(\mathbf{z}_{\alpha})]-\left\langle \bar{\mathbf{x}},\underset{\alpha\in\bar{E}\cup V}{\sum}\mathbf{z}_{\alpha}\right\rangle +\frac{1}{2}\left\Vert \underset{\alpha\in\bar{E}\cup V}{\sum}\mathbf{z}_{\alpha}\right\Vert ^{2}\end{array}\nonumber \\
 & \overset{\scriptsize\mbox{Fenchel duality}}{\geq} & \begin{array}{c}
\frac{1}{2}\|\bar{\mathbf{x}}-\mathbf{x}\|^{2}+\left\langle \mathbf{x},\underset{\alpha\in\bar{E}\cup V}{\sum}\mathbf{z}_{\alpha}\right\rangle -\left\langle \bar{\mathbf{x}},\underset{\alpha\in\bar{E}\cup V}{\sum}\mathbf{z}_{\alpha}\right\rangle +\frac{1}{2}\left\Vert \underset{\alpha\in\bar{E}\cup V}{\sum}\mathbf{z}_{\alpha}\right\Vert ^{2}\end{array}\nonumber \\
 & = & \begin{array}{c}
\frac{1}{2}\left\Vert \bar{\mathbf{x}}-\mathbf{x}-\underset{\alpha\in\bar{E}\cup V}{\sum}\mathbf{z}_{\alpha}\right\Vert ^{2}\geq0.\end{array}\nonumber 
\end{eqnarray}
Since the estimate of the primal solution $\mathbf{x}^{n,w}$ is $\bar{\mathbf{x}}-\sum_{\alpha\in V\cup\bar{E}}\mathbf{z}_{\alpha}^{n,w}$
by \eqref{eq_m:all_acronyms}, substituting $\mathbf{x}$ in \eqref{eq:From-8}
to be the primal optimal solution \textbf{$\mathbf{x}^{*}$ }shows
that the difference of the dual objective value and its optimal value
bounds the distance $\frac{1}{2}\|\mathbf{x}^{n,w}-\mathbf{x}^{*}\|^{2}$.
For the rest of this paper, we shall be looking at the rate of convergence
of the dual objective value \eqref{eq:h-def-1} to its optimum value.
This remark justifies the usefulness of our results.
\end{rem}

\end{rem}

\section{\label{sec:Lin-conv}Linear convergence when all functions are smooth}

Throughout this section, we make the following assumption. 
\begin{assumption}
\label{assu:lin-conv-assu}For the problem \eqref{eq:general-framework},
we make the following assumptions: 

\begin{enumerate}
\item $V=V_{4}$. In other words, $V_{1}=V_{2}=V_{3}=\emptyset$. 
\item The sets $X_{i}=\mathbb{R}^{m}$, $\bar{E}=E\times\{1,\dots,m\}$
and $\{H_{\alpha}\}_{\alpha\in\bar{E}}$ are as described in Example
\ref{exa:distrib-dyk}.
\item For all $i\in V_{4}$, $f_{i}^{*}(\cdot)$ is strongly convex with
modulus $\sigma>0$, which is equivalent to $\nabla f_{i}(\cdot)$
being Lipschitz continuous with constant $\frac{1}{\sigma}$. {[}Note
that in general, $f_{i}(\cdot)$ are subdifferentiable for all $i\in V_{4}$,
but we now limit to only smooth $f_{i}(\cdot)$.{]} 
\end{enumerate}
\end{assumption}

For the problem \eqref{eq:distrib-dyk-primal-pblm} in Example \ref{exa:distrib-dyk}
satisfying Assumption \ref{assu:lin-conv-assu}, a primal method,
for example \cite{Gurbuzbalaban_Ozdaglar_Parrilo_SIOPT_2017,Aytekin_F_Johansson_2016},
can achieve linear convergence. Since Algorithm \ref{alg:Ext-Dyk}
has the features explained in Subsection \ref{subsec:Dist-Dyk}, it
is of interest to find out whether Algorithm \ref{alg:Ext-Dyk} also
has linear convergence under Assumption \ref{assu:lin-conv-assu}.
We shall prove such a result in this section.

We write down the function $F_{S}:\mathbf{X}^{|V\cup\bar{E}|}\to\mathbb{R}\cup\{\infty\}$
to be minimized. 
\begin{equation}
\begin{array}{c}
\underset{\mathbf{z}_{\alpha}\in\mathbf{X}:\alpha\in V\cup\bar{E}}{\min}F_{S}(\{\mathbf{z}_{\alpha}\}_{\alpha\in V\cup\bar{E}}):=\underset{i\in V}{\sum}\mathbf{f}_{i}^{*}(\mathbf{z}_{i})+\underset{\alpha\in\bar{E}}{\sum}\delta_{H_{\alpha}}^{*}(\mathbf{z}_{\alpha})+\frac{1}{2}\Big\|\bar{\mathbf{x}}-\underset{\alpha\in V\cup\bar{E}}{\sum}\mathbf{z}_{\alpha}\Big\|^{2}.\end{array}\label{eq:F-S-problem}
\end{equation}
Note that $F_{S}(\cdot)$ is related to the dual function $F(\cdot)$
in \eqref{eq:h-def} by a sign change and a constant. 

For a set of dual variables $\mathbf{z}^{0}\in\mathbf{X}^{|V\cup\bar{E}|}$,
let $\mathbf{x}^{0}$, \textbf{$\mathbf{v}_{H}^{0}$ }and $\mathbf{v}_{A}^{0}$
be related via \eqref{eq_m:all_acronyms}. Note that $\mathbf{v}_{H}^{0}=\sum_{\alpha\in\bar{E}}\mathbf{z}_{\alpha}^{0}\in\sum_{\alpha\in\bar{E}}H_{\alpha}^{\perp}\subset D^{\perp}$.
We write $\hat{z}^{0}\in\mathbb{R}^{m}$ as 
\begin{eqnarray}
\hat{z}^{0} & := & \begin{array}{c}
-\frac{1}{|V|}\underset{i\in V}{\overset{\phantom{i\in V}}{\sum}[}\mathbf{x}^{0}]_{i}\overset{\eqref{eq_m:all_acronyms}}{=}-\frac{1}{|V|}\underset{i\in V}{\overset{\phantom{i\in V}}{\sum}}\Big[\bar{\mathbf{x}}-\mathbf{v}_{H}^{0}-\underset{j\in V}{\sum}\mathbf{z}_{j}^{0}\Big]_{i}\end{array}\nonumber \\
 & \overset{\scriptsize{\mbox{Prop }\ref{prop:sparsity}}}{=} & \begin{array}{c}
-\frac{1}{|V|}\underset{i\in V}{\overset{\phantom{i\in V}}{\sum}}\Big[\bar{\mathbf{x}}-\mathbf{v}_{H}^{0}-\mathbf{z}_{i}^{0}\Big]_{i}\overset{\eqref{eq:D-formula-b}}{=}\frac{1}{|V|}\underset{i\in V}{\overset{\phantom{i\in V}}{\sum}}[\mathbf{z}_{i}^{0}-\bar{\mathbf{x}}]_{i}.\end{array}\label{eq:z-hat-zero}
\end{eqnarray}
In other words, the vector $\hat{\mathbf{z}}^{0}:=(\hat{z}^{0},\dots,\hat{z}^{0})$
is the projection of $-\mathbf{x}^{0}$ onto $D$ as defined in \eqref{eq:D-formula-a}.
Define $\mathbf{e}\in[\mathbb{R}^{m}]^{|V|}$ by $[\mathbf{e}]_{i}:=[\bar{\mathbf{x}}-\mathbf{v}_{H}^{0}-\mathbf{z}_{i}^{0}]_{i}+[\hat{\mathbf{z}}^{0}]_{i}$
so that $[\bar{\mathbf{x}}-\mathbf{v}_{H}^{0}-\mathbf{z}_{i}^{0}]_{i}=[-\hat{\mathbf{z}}^{0}+\mathbf{e}]_{i}$.
The value $F_{S}(\{\mathbf{z}_{\alpha}^{0}\}_{\alpha\in V\cup\bar{E}})$
can be written as 
\[
\begin{array}{c}
F_{S}(\{\mathbf{z}_{\alpha}^{0}\}_{\alpha\in V\cup\bar{E}})=\underset{i\in V}{\overset{\phantom{i\in V}}{\sum}}\left[f_{i}^{*}([\mathbf{z}_{i}^{0}]_{i})+\frac{1}{2}\|-\hat{z}^{0}+[\mathbf{e}]_{i}\|^{2}\right].\end{array}
\]
Let $\mathbf{z}^{*}$ be a minimizer of $F_{S}(\cdot)$. The strong
convexity of $f_{i}^{*}(\cdot)$ from Assumption \ref{assu:lin-conv-assu}(3)
ensures that \textbf{$\mathbf{z}_{i}^{*}$ }is unique if $i\in V$.
(Though $\mathbf{z}_{\alpha}^{*}$ need not be unique if $\alpha\in\bar{E}$.)
The unique solution has the value 
\[
\begin{array}{c}
F_{S}^{*}:=F_{S}(\mathbf{z}^{*})=\underset{i\in V}{\overset{\phantom{i\in V}}{\sum}}\left[f_{i}^{*}([\mathbf{z}_{i}^{*}]_{i})+\frac{1}{2}\|-\hat{z}^{*}\|^{2}\right],\end{array}
\]
where $\hat{z}^{*}$ is defined to be $\hat{z}^{*}=-\frac{1}{|V|}\sum_{i\in V}[\bar{\mathbf{x}}-\mathbf{v}_{H}^{*}-\mathbf{z}_{i}^{*}]_{i}=\frac{1}{|V|}\sum_{i\in V}[\mathbf{z}_{i}^{*}-\bar{\mathbf{x}}]_{i}$.
Note that $(-\hat{z}^{*},\dots,-\hat{z}^{*})$ is the projection of
$\mathbf{x}^{*}=\bar{\mathbf{x}}-\mathbf{v}_{H}^{*}-\sum_{i\in V}\mathbf{z}_{i}^{*}$
onto $D$.  For $\mathbf{v}_{H}^{*}$ to be optimal, we need all
the components of $\bar{\mathbf{x}}-\mathbf{v}_{H}^{*}-\sum_{i\in V}\mathbf{z}_{i}^{*}$
to have the same components so that $\frac{1}{2}\|\bar{\mathbf{x}}-\mathbf{v}_{H}^{*}-\sum_{i\in V}\mathbf{z}_{i}^{*}\|^{2}$,
which appears in the dual objective function \eqref{eq:h-def-1} through
\eqref{eq:v-H-def}, has minimum norm. This leads to $\mathbf{x}^{*}$
having all components being $-\hat{z}^{*}$.  
\begin{lem}
\label{lem:gamma-ineq}Suppose Assumption \ref{assu:lin-conv-assu}(2)
and (3) hold. Suppose $\{\mathbf{z}_{\alpha}^{0}\}_{\alpha\in V\cup\bar{E}}$
is a dual variable, and let the derived variables $\mathbf{x}^{0}$,
\textbf{$\mathbf{v}_{H}^{0}$, }$\mathbf{v}_{A}^{0}$ and $\mathbf{e}$
be as defined in the above commentary. Let $\mathbf{z}_{i}^{+}\in[\mathbb{R}^{m}]^{|V|}$
be defined so that $[\mathbf{z}_{i}^{+}]_{j}=0$ when $i\neq j$ and
\begin{equation}
\begin{array}{c}
[\mathbf{z}_{i}^{+}]_{i}=\underset{z\in\mathbb{R}^{m}}{\overset{\phantom{z\in\mathbb{R}^{m}}}{\arg\min}}f_{i}^{*}(z)+\frac{1}{2}\|-\hat{z}^{0}+[\mathbf{z}_{i}^{0}+\mathbf{e}_{i}]-z\|^{2}\mbox{ for all }i\in V.\end{array}\label{eq:z-plus-minimizer}
\end{equation}
For all $i\in V$, let $\tilde{f}_{i}:\mathbb{R}^{m}\to\mathbb{R}$
and $\tilde{\mathbf{f}}_{i}:[\mathbb{R}^{m}]^{|V|}\to\mathbb{R}$
be related through $\tilde{\mathbf{f}}_{i}(\mathbf{x})=\tilde{f}_{i}([\mathbf{x}]_{i})$.
Assume also that $\tilde{f}_{i}(\cdot)\leq f_{i}(\cdot)$, which is
equivalent to $\tilde{f}_{i}^{*}(\cdot)\geq f_{i}^{*}(\cdot)$. Let
$\tilde{F}_{S}:[[\mathbb{R}^{m}]^{|V|}]^{|V\cup\bar{E}|}\to\mathbb{R}\cup\{\infty\}$
be defined in a manner similar to \eqref{eq:F-S-problem} as 
\begin{equation}
\begin{array}{c}
\tilde{F}_{S}(\{\mathbf{z}_{\alpha}\}_{\alpha\in V\cup\bar{E}}):=\underset{i\in V}{\sum}\tilde{\mathbf{f}}_{i}^{*}(\mathbf{z}_{i})+\underset{\alpha\in\bar{E}}{\sum}\delta_{H_{\alpha}}^{*}(\mathbf{z}_{\alpha})+\frac{1}{2}\Big\|\bar{\mathbf{x}}-\underset{\alpha\in V\cup\bar{E}}{\sum}\mathbf{z}_{\alpha}\Big\|^{2}.\end{array}\label{eq:F-S-tilde}
\end{equation}
Then one can check that 
\begin{eqnarray*}
 &  & \begin{array}{c}
F_{S}(\{\mathbf{z}_{i}\}_{i\in V},\{\mathbf{z}_{\alpha}^{0}\}_{\alpha\in\bar{E}})=\underset{i\in V}{\sum}\left[f_{i}^{*}([\mathbf{z}_{i}]_{i})+\frac{1}{2}\|-\hat{z}^{0}+[\mathbf{z}_{i}^{0}+\mathbf{e}-\mathbf{z}_{i}]_{i}\|^{2}\right]\end{array}\\
 & \mbox{ and} & \begin{array}{c}
\tilde{F}_{S}(\{\mathbf{z}_{i}\}_{i\in V},\{\mathbf{z}_{\alpha}^{0}\}_{\alpha\in\bar{E}})=\underset{i\in V}{\overset{\phantom{i\in V}}{\sum}}\left[\tilde{f}_{i}^{*}([\mathbf{z}_{i}]_{i})+\frac{1}{2}\|-\hat{z}^{0}+[\mathbf{z}_{i}^{0}+\mathbf{e}-\mathbf{z}_{i}]_{i}\|^{2}\right].\end{array}
\end{eqnarray*}
Let $\mathbf{z}^{*}$ be a minimizer of $F_{S}(\cdot)$, and let $\mathbf{z}_{e}^{+}=\mathbf{z}_{e}^{0}$
for all $e\in\bar{E}$. Then there exists constants $\gamma\in(0,1)$
and $M>0$ such that if $\mathbf{z}^{0}$ and $\mathbf{z}^{+}$ are
related as described, then 
\begin{eqnarray}
\begin{array}{c}
F_{S}(\mathbf{z}^{+})-F_{S}(\mathbf{z}^{*})\end{array} & \leq & \begin{array}{c}
\gamma[F_{S}(\mathbf{z}^{0})-F_{S}(\mathbf{z}^{*})]+M\underset{i\in V}{\overset{\phantom{i\in V}}{\sum}}\|[\mathbf{e}]_{i}\|^{2}\end{array}\label{eq:lemma-lin-conv-last}\\
 & \leq & \begin{array}{c}
\gamma[\tilde{F}_{S}(\mathbf{z}^{0})-F_{S}(\mathbf{z}^{*})]+M\underset{i\in V}{\overset{\phantom{i\in V}}{\sum}}\|[\mathbf{e}]_{i}\|^{2}.\end{array}\nonumber 
\end{eqnarray}
\end{lem}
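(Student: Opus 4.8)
The plan is to collapse the whole inequality into a single strongly convex, fully decoupled minimization over the node variables, and then run the standard interpolation argument for linear convergence, paying for the frozen edge variables with the consensus error $\mathbf{e}$. Write $\mathbf{t}^{0}:=([\mathbf{z}_{i}^{0}]_{i})_{i\in V}$, $\mathbf{t}^{*}:=([\mathbf{z}_{i}^{*}]_{i})_{i\in V}$, $\mathbf{t}^{+}:=([\mathbf{z}_{i}^{+}]_{i})_{i\in V}$, and set
$$G(\mathbf{t}):=\sum_{i\in V}\Big[f_{i}^{*}(t_{i})+\tfrac{1}{2}\big\|[\bar{\mathbf{x}}-\mathbf{v}_{H}^{0}]_{i}-t_{i}\big\|^{2}\Big].$$
By the first displayed identity in the statement, $F_{S}(\{\mathbf{z}_{i}\}_{i\in V},\{\mathbf{z}_{\alpha}^{0}\}_{\alpha\in\bar{E}})=G(\mathbf{t})$, so $\mathbf{z}^{+}$ (whose edge part is $\mathbf{z}^{0}$) satisfies $F_{S}(\mathbf{z}^{+})=G(\mathbf{t}^{+})=\min_{\mathbf{t}}G(\mathbf{t})$ by \eqref{eq:z-plus-minimizer}. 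Each summand of $G$ is strongly convex with modulus $\sigma+1$ (modulus $\sigma$ from $f_{i}^{*}$ by Assumption \ref{assu:lin-conv-assu}(3), plus $1$ from the quadratic), hence $G$ is $(\sigma+1)$-strongly convex. The second inequality in \eqref{eq:lemma-lin-conv-last} I would dispose of first: since $\tilde{f}_{i}^{*}\geq f_{i}^{*}$ and the quadratic terms coincide, $F_{S}(\mathbf{z}^{0})\leq\tilde{F}_{S}(\mathbf{z}^{0})$, and multiplying by $\gamma>0$ gives it.

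Two elementary identities drive the rest. First, evaluating $G$ at $\mathbf{t}^{*}$ and writing $\mathbf{y}:=\bar{\mathbf{x}}-\mathbf{v}_{H}^{0}-\sum_{i}\mathbf{z}_{i}^{*}=\mathbf{x}^{*}+(\mathbf{v}_{H}^{*}-\mathbf{v}_{H}^{0})$, the orthogonal splitting $\mathbf{x}^{*}\in D$, $\mathbf{v}_{H}^{*}-\mathbf{v}_{H}^{0}\in D^{\perp}$, together with $\mathbf{x}^{*}$ having all components $-\hat{z}^{*}$, gives
$$G(\mathbf{t}^{*})=\sum_{i}f_{i}^{*}([\mathbf{z}_{i}^{*}]_{i})+\tfrac{1}{2}\|\mathbf{y}\|^{2}=F_{S}^{*}+\tfrac{1}{2}\|\mathbf{v}_{H}^{0}-\mathbf{v}_{H}^{*}\|^{2}.$$
Second, from $\mathbf{x}^{*}\in D$ we get $\mathbf{v}_{H}^{*}=P_{D^{\perp}}(\bar{\mathbf{x}}-\sum_{i}\mathbf{z}_{i}^{*})$, while $\mathbf{e}=P_{D^{\perp}}\mathbf{x}^{0}=P_{D^{\perp}}(\bar{\mathbf{x}}-\sum_{i}\mathbf{z}_{i}^{0})-\mathbf{v}_{H}^{0}$; subtracting and using Proposition \ref{prop:sparsity} (so that $\sum_{i}(\mathbf{z}_{i}^{*}-\mathbf{z}_{i}^{0})$ has $i$-th component $t_{i}^{*}-t_{i}^{0}$) yields
$$\|\mathbf{v}_{H}^{0}-\mathbf{v}_{H}^{*}\|\leq\|\mathbf{e}\|+\Big\|\sum_{i}(\mathbf{z}_{i}^{*}-\mathbf{z}_{i}^{0})\Big\|=\|\mathbf{e}\|+\|\mathbf{t}^{0}-\mathbf{t}^{*}\|.$$

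Now I would run the interpolation step. For $\theta\in(0,1)$, strong convexity of $G$ at $(1-\theta)\mathbf{t}^{0}+\theta\mathbf{t}^{*}$ gives
$$F_{S}(\mathbf{z}^{+})=\min_{\mathbf{t}}G\leq(1-\theta)G(\mathbf{t}^{0})+\theta G(\mathbf{t}^{*})-\tfrac{\sigma+1}{2}\theta(1-\theta)\|\mathbf{t}^{0}-\mathbf{t}^{*}\|^{2}.$$
Substituting $G(\mathbf{t}^{0})=F_{S}(\mathbf{z}^{0})$ and the two identities, subtracting $F_{S}^{*}$, and bounding $\tfrac{1}{2}\|\mathbf{v}_{H}^{0}-\mathbf{v}_{H}^{*}\|^{2}\leq\tfrac{1+\varepsilon}{2}\|\mathbf{t}^{0}-\mathbf{t}^{*}\|^{2}+\tfrac{1+1/\varepsilon}{2}\|\mathbf{e}\|^{2}$ by Young's inequality, the coefficient of $\|\mathbf{t}^{0}-\mathbf{t}^{*}\|^{2}$ becomes $\tfrac{\theta}{2}[(1+\varepsilon)-(\sigma+1)(1-\theta)]$. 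Choosing $\varepsilon\in(0,\sigma)$ and $\theta=1-\tfrac{1+\varepsilon}{\sigma+1}\in(0,1)$ makes this coefficient $\leq0$, so the term drops and
$$F_{S}(\mathbf{z}^{+})-F_{S}^{*}\leq(1-\theta)[F_{S}(\mathbf{z}^{0})-F_{S}^{*}]+\theta\,\tfrac{1+1/\varepsilon}{2}\,\|\mathbf{e}\|^{2},$$
which is the first inequality with $\gamma:=1-\theta=\tfrac{1+\varepsilon}{\sigma+1}\in(0,1)$ and $M:=\tfrac{\theta(1+1/\varepsilon)}{2}$, using $\|\mathbf{e}\|^{2}=\sum_{i}\|[\mathbf{e}]_{i}\|^{2}$.

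The main obstacle is the constant bookkeeping in this last step: the positive $\|\mathbf{t}^{0}-\mathbf{t}^{*}\|^{2}$ contribution arising when $\|\mathbf{v}_{H}^{0}-\mathbf{v}_{H}^{*}\|^{2}$ is split must be absorbed by the negative strong-convexity term, which is possible precisely because the modulus $\sigma+1$ is strictly larger than $1$. This is exactly where Assumption \ref{assu:lin-conv-assu}(3), i.e.\ $\sigma>0$, is indispensable: with only the quadratic's unit modulus the cancellation fails and no admissible $\gamma<1$ survives.
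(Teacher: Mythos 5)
Your proof is correct, and it takes a genuinely different route from the paper's. The paper argues at the level of first-order optimality conditions: it writes the strong-convexity subgradient inequality for $f_{i}^{*}$ at $[\mathbf{z}_{i}^{+}]_{i}$ against $[\mathbf{z}_{i}^{*}]_{i}$ (its \eqref{eq:f-i-to-opt}), expands all the resulting cross terms, disposes of the edge-block mismatch by observing that a certain aggregate of those terms equals $\tfrac{|V|}{2}\|\hat{z}^{0}-\hat{z}^{*}\|^{2}\geq0$ (its \eqref{eq:block-1}), and finally combines the outcome with the sufficient-decrease inequality \eqref{eq:f-i-original} weighted by $\tfrac{4+\sigma}{\sigma(1+\sigma)}$, which yields $\gamma=\big(\tfrac{4+\sigma}{\sigma(1+\sigma)}\big)/\big(\tfrac{4+\sigma}{\sigma(1+\sigma)}+1\big)$. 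You instead exploit that $\mathbf{z}^{+}$ is the \emph{exact} minimizer of the decoupled $(\sigma+1)$-strongly convex surrogate $G$, evaluate $G$ along the segment joining $\mathbf{t}^{0}$ and $\mathbf{t}^{*}$, and account for the frozen edge block through the exact Pythagorean identity $G(\mathbf{t}^{*})=F_{S}^{*}+\tfrac{1}{2}\|\mathbf{v}_{H}^{0}-\mathbf{v}_{H}^{*}\|^{2}$ (valid because $\mathbf{x}^{*}\in D$ and $\mathbf{v}_{H}^{0},\mathbf{v}_{H}^{*}\in D^{\perp}$) together with $\|\mathbf{v}_{H}^{0}-\mathbf{v}_{H}^{*}\|\leq\|\mathbf{e}\|+\|\mathbf{t}^{0}-\mathbf{t}^{*}\|$, absorbing the $\|\mathbf{t}^{0}-\mathbf{t}^{*}\|^{2}$ contribution into the strong-convexity margin. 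Both arguments rest on the same structural facts (sparsity of the node duals, $\mathbf{v}_{H}^{0}\in D^{\perp}$, the optimal $\mathbf{v}_{H}^{*}$ being a projection onto $D^{\perp}$) and both degenerate as $\sigma\to0$, but yours is shorter, makes the indispensability of $\sigma>0$ transparent at a single absorption step, and produces the cleaner constant $\gamma=\tfrac{1+\varepsilon}{\sigma+1}$ with an explicit trade-off against $M=\tfrac{\theta(1+1/\varepsilon)}{2}$, whereas the paper's bookkeeping hides the analogous trade-off inside the choice $\epsilon=\sigma/2$.
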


\begin{proof}
The second inequality of \eqref{eq:lemma-lin-conv-last} is obvious
from $\tilde{f}_{i}^{*}(\cdot)\geq f_{i}^{*}(\cdot)$. We prove the
first inequality. By \eqref{eq:z-plus-minimizer} and Assumption \ref{assu:lin-conv-assu}(3),
we have, for all $i\in V$, 
\begin{eqnarray}
 &  & \begin{array}{c}
f_{i}^{*}([\mathbf{z}_{i}^{0}]_{i})+\frac{1}{2}\|-\hat{z}^{0}+[\mathbf{e}]_{i}\|^{2}\end{array}\label{eq:f-i-original}\\
 & \overset{\eqref{eq:z-plus-minimizer}}{\geq} & \begin{array}{c}
f_{i}^{*}([\mathbf{z}_{i}^{+}]_{i})+\frac{1}{2}\|-\hat{z}^{0}+[\mathbf{z}_{i}^{0}-\mathbf{z}_{i}^{+}+\mathbf{e}]_{i}\|^{2}+\frac{1+\sigma}{2}\|[\mathbf{z}_{i}^{0}-\mathbf{z}_{i}^{+}]\|^{2}.\end{array}\nonumber 
\end{eqnarray}
Also, the optimality condition of \eqref{eq:z-plus-minimizer} implies
that $-\hat{z}^{0}+[\mathbf{z}_{i}^{0}-\mathbf{z}_{i}^{+}+\mathbf{e}]_{i}\in\partial f_{i}([\mathbf{z}_{i}^{+}]_{i})$,
so together with Assumption \ref{assu:lin-conv-assu}(3), we have
\begin{eqnarray}
 &  & \begin{array}{c}
f_{i}^{*}([\mathbf{z}_{i}^{*}]_{i})+\frac{1}{2}\|-\hat{z}^{*}\|^{2}\end{array}\nonumber \\
 & \geq & \begin{array}{c}
f_{i}^{*}([\mathbf{z}_{i}^{+}]_{i})+\frac{1}{2}\|-\hat{z}^{*}\|^{2}\end{array}\nonumber \\
 &  & \begin{array}{c}
+\langle-\hat{z}^{0}+[\mathbf{z}_{i}^{0}-\mathbf{z}_{i}^{+}+\mathbf{e}]_{i},[\mathbf{z}_{i}^{*}-\mathbf{z}_{i}^{+}]_{i}\rangle+\frac{\sigma}{2}\|[\mathbf{z}_{i}^{*}-\mathbf{z}_{i}^{+}]_{i}\|^{2}\end{array}\nonumber \\
 & = & \begin{array}{c}
f_{i}^{*}([\mathbf{z}_{i}^{+}]_{i})+\frac{1}{2}\|-\hat{z}^{0}+[\mathbf{z}_{i}^{0}-\mathbf{z}_{i}^{+}+\mathbf{e}]_{i}\|^{2}+\frac{\sigma}{2}\|[\mathbf{z}_{i}^{*}-\mathbf{z}_{i}^{+}]_{i}\|^{2}\end{array}\label{eq:f-i-to-opt}\\
 &  & \begin{array}{c}
+\frac{1}{2}\|-\hat{z}^{*}\|^{2}+\langle-\hat{z}^{0}+[\mathbf{z}_{i}^{0}-\mathbf{z}_{i}^{+}+\mathbf{e}]_{i},[\mathbf{z}_{i}^{*}-\mathbf{z}_{i}^{+}]_{i}\rangle\end{array}\nonumber \\
 &  & \begin{array}{c}
-(\frac{1}{2}\|\hat{z}^{0}\|^{2}+\frac{1}{2}\|[\mathbf{z}_{i}^{0}-\mathbf{z}_{i}^{+}]_{i}\|^{2}+\frac{1}{2}\|[\mathbf{e}]_{i}\|^{2}\end{array}\nonumber \\
 &  & \begin{array}{c}
\qquad+\langle-\hat{z}^{0},[\mathbf{z}_{i}^{0}-\mathbf{z}_{i}^{+}]_{i}\rangle+\langle[\mathbf{e}]_{i},-\hat{z}^{0}\rangle+\langle[\mathbf{e}]_{i},[\mathbf{z}_{i}^{0}-\mathbf{z}_{i}^{+}]_{i}\rangle).\end{array}\nonumber 
\end{eqnarray}
For the terms not involving $[\mathbf{e}]_{i}$ in the last formula
of \eqref{eq:f-i-to-opt}, we have 
\begin{eqnarray}
\begin{array}{c}
\underset{i\in V}{\overset{\phantom{i\in V}}{\sum}}\langle[\mathbf{z}_{i}^{0}-\mathbf{z}_{i}^{+}]_{i},[\mathbf{z}_{i}^{*}-\mathbf{z}_{i}^{+}]_{i}\rangle\geq\underset{i\in V}{\overset{\phantom{i\in V}}{\sum}}\left[-\frac{1}{2\epsilon}\|[\mathbf{z}_{i}^{0}-\mathbf{z}_{i}^{+}]_{i}\|^{2}-\frac{\epsilon}{2}\|[\mathbf{z}_{i}^{*}-\mathbf{z}_{i}^{+}]_{i}\|^{2}\right],\end{array}\label{eq:term-1}
\end{eqnarray}
and 
\begin{eqnarray}
 &  & \begin{array}{c}
\underset{i\in V}{\sum}[\langle-\hat{z}^{0},[\mathbf{z}_{i}^{*}-\mathbf{z}_{i}^{+}]_{i}\rangle+\langle\hat{z}^{0},[\mathbf{z}_{i}^{0}-\mathbf{z}_{i}^{+}]_{i}\rangle+\frac{1}{2}\|-\hat{z}^{*}\|^{2}-\frac{1}{2}\|\hat{z}^{0}\|^{2}]\end{array}\nonumber \\
 & = & \begin{array}{c}
\underset{i\in V}{\overset{\phantom{i\in V}}{\sum}}[\langle\hat{z}^{0},[\mathbf{z}_{i}^{0}-\mathbf{z}_{i}^{*}]_{i}\rangle+\frac{1}{2}\|-\hat{z}^{*}\|^{2}-\frac{1}{2}\|\hat{z}^{0}\|^{2}]\end{array}\label{eq:block-1}\\
 & = & \begin{array}{c}
|V|\langle\hat{z}^{0},\hat{z}^{0}-\hat{z}^{*}\rangle+\frac{|V|}{2}\|\hat{z}^{*}\|^{2}-\frac{|V|}{2}\|\hat{z}^{0}\|^{2}=\frac{|V|}{2}\|\hat{z}^{0}-\hat{z}^{*}\|^{2}\geq0.\end{array}\nonumber 
\end{eqnarray}
For the terms involving $[\mathbf{e}]_{i}$ in the last formula in
\eqref{eq:f-i-to-opt}, we have
\begin{eqnarray}
\begin{array}{c}
\underset{i\in V}{\overset{\phantom{i\in V}}{\sum}}\langle[\mathbf{e}]_{i},[\mathbf{z}_{i}^{*}-\mathbf{z}_{i}^{+}]_{i}\rangle\end{array} & \geq & \begin{array}{c}
\underset{i\in V}{\overset{\phantom{i\in V}}{\sum}}\left[-\frac{1}{2\epsilon}\|[\mathbf{e}]_{i}\|^{2}-\frac{\epsilon}{2}\|[\mathbf{z}_{i}^{*}-\mathbf{z}_{i}^{+}]_{i}\|^{2}\right],\end{array}\nonumber \\
\begin{array}{c}
\underset{i\in V}{\overset{\phantom{i\in V}}{\sum}}\langle[\mathbf{e}]_{i},-\hat{z}^{0}\rangle\end{array} & = & \begin{array}{c}
\left\langle \underset{i\in V}{\overset{\phantom{i\in V}}{\sum}}[\mathbf{e}]_{i},-\hat{z}^{0}\right\rangle =0,\end{array}\label{eq:block-2}\\
\begin{array}{c}
\underset{i\in V}{\overset{\phantom{i\in V}}{\sum}}\langle[\mathbf{e}]_{i},[\mathbf{z}_{i}^{0}-\mathbf{z}_{i}^{+}]_{i}\rangle\end{array} & \geq & \begin{array}{c}
\underset{i\in V}{\overset{\phantom{i\in V}}{\sum}}\left[-\frac{\epsilon}{2}\|[\mathbf{e}]_{i}\|^{2}-\frac{1}{2\epsilon}\|[\mathbf{z}_{i}^{0}-\mathbf{z}_{i}^{+}]_{i}\|^{2}\right].\end{array}\nonumber 
\end{eqnarray}
Summing up the right hand sides of \eqref{eq:term-1}, \eqref{eq:block-1}
and \eqref{eq:block-2} and $\sum_{i\in V}[\frac{\sigma}{2}\|[\mathbf{z}_{i}^{*}-\mathbf{z}_{i}^{+}]_{i}\|^{2}-\frac{1}{2}\|[\mathbf{e}]_{i}\|^{2}-\frac{1}{2}\|[\mathbf{z}_{i}^{0}-\mathbf{z}_{i}^{+}]_{i}\|^{2}]$
and setting $\epsilon=\sigma/2$ gives 
\begin{eqnarray}
 &  & \begin{array}{c}
\underset{i\in V}{\sum}\big[(\frac{\sigma}{2}-\epsilon)\|[\mathbf{z}_{i}^{*}-\mathbf{z}_{i}^{+}]_{i}\|^{2}-(\frac{1}{\epsilon}+\frac{1}{2})\|[\mathbf{z}_{i}^{0}-\mathbf{z}_{i}^{+}]_{i}\|^{2}-[\frac{1}{2\epsilon}+\frac{1}{2}+\frac{\epsilon}{2}]\|[\mathbf{e}]_{i}\|^{2}\big]\end{array}\nonumber \\
 & = & \begin{array}{c}
\underset{i\in V}{\overset{\phantom{i\in V}}{\sum}}\big[-(\frac{2}{\sigma}+\frac{1}{2})\|[\mathbf{z}_{i}^{0}-\mathbf{z}_{i}^{+}]_{i}\|^{2}-[\frac{1}{\sigma}+\frac{1}{2}+\frac{\sigma}{4}]\|[\mathbf{e}]_{i}\|^{2}\big].\end{array}\label{eq:block-3}
\end{eqnarray}
Summing the formulas in \eqref{eq:f-i-to-opt} to \eqref{eq:block-3},
we have 
\begin{eqnarray}
 &  & \begin{array}{c}
\underset{i\in V}{\sum}\left[f_{i}^{*}([\mathbf{z}_{i}^{*}]_{i})+\frac{1}{2}\|-\hat{z}^{*}\|^{2}\right]\end{array}\nonumber \\
 & \overset{\scriptsize{\text{\eqref{eq:f-i-to-opt} to \eqref{eq:block-3}}}}{\geq} & \begin{array}{c}
\underset{i\in V}{\overset{\phantom{i\in V}}{\sum}}\left[f_{i}^{*}([\mathbf{z}_{i}^{+}]_{i})+\frac{1}{2}\|-\hat{z}^{0}+[\mathbf{z}_{i}^{0}-\mathbf{z}_{i}^{+}+\mathbf{e}]_{i}\|^{2}\right]\end{array}\label{eq:f-i-to-opt-better}\\
 &  & \begin{array}{c}
+\underset{i\in V}{\sum}\left[-(\frac{2}{\sigma}+\frac{1}{2})\|[\mathbf{z}_{i}^{0}-\mathbf{z}_{i}^{+}]_{i}\|^{2}-[\frac{1}{\sigma}+\frac{1}{2}+\frac{\sigma}{4}]\|[\mathbf{e}]_{i}\|^{2}\right].\end{array}\nonumber 
\end{eqnarray}
We can then sum up \eqref{eq:f-i-original} multiplied by $\frac{4+\sigma}{\sigma(1+\sigma)}$
and \eqref{eq:f-i-to-opt-better} to get 
\begin{eqnarray}
 &  & \begin{array}{c}
\left(\frac{4+\sigma}{\sigma(1+\sigma)}\right)\underset{i\in V}{\overset{\phantom{i\in V}}{\sum}}[f_{i}^{*}([\mathbf{z}_{i}^{0}]_{i})+\frac{1}{2}\|-\hat{z}^{0}+[\mathbf{e}]_{i}\|^{2}]+\underset{i\in V}{\sum}\left[f_{i}^{*}([\mathbf{z}_{i}^{*}]_{i})+\frac{1}{2}\|-\hat{z}^{*}\|^{2}\right]\end{array}\nonumber \\
 & \geq & \begin{array}{c}
\left(\frac{4+\sigma}{\sigma(1+\sigma)}+1\right)\underset{i\in V}{\overset{\phantom{i\in V}}{\sum}}\left[f_{i}^{*}([\mathbf{z}_{i}^{+}]_{i})+\frac{1}{2}\|-\hat{z}^{0}+[\mathbf{z}_{i}^{0}-\mathbf{z}_{i}^{+}+\mathbf{e}]_{i}\|^{2}\right]\end{array}\label{eq:F-S-lin-origin}\\
 &  & \begin{array}{c}
-[\frac{1}{\sigma}+\frac{1}{2}+\frac{\sigma}{4}]\underset{i\in V}{\sum}\|[\mathbf{e}]_{i}\|^{2}.\end{array}\nonumber 
\end{eqnarray}
Letting $\gamma=\left(\frac{4+\sigma}{\sigma(1+\sigma)}\right)/\left(\frac{4+\sigma}{\sigma(1+\sigma)}+1\right)$,
we can rearrange \eqref{eq:F-S-lin-origin} to get\textbf{ }the first
inequality in \eqref{eq:lemma-lin-conv-last} with $M=(1-\gamma)[\frac{1}{\sigma}+\frac{1}{2}+\frac{\sigma}{4}]$.
This concludes the proof. 
\end{proof}
We now proceed to prove the linear convergence result. Let $\tilde{F}_{S}^{n,w}(\cdot)$
be defined by 
\begin{equation}
\begin{array}{c}
\tilde{F}_{S}^{n,w}(\{\mathbf{z}_{\alpha}\}_{\alpha\in V\cup\bar{E}}):=\underset{i\in V}{\sum}\mathbf{f}_{i,n,w}^{*}(\mathbf{z}_{i})+\underset{\alpha\in\bar{E}}{\sum}\delta_{H_{\alpha}}^{*}(\mathbf{z}_{\alpha})+\frac{1}{2}\Big\|\bar{\mathbf{x}}-\underset{\alpha\in V\cup\bar{E}}{\overset{\phantom{\alpha\in V\cup\bar{E}}}{\sum}}\mathbf{z}_{\alpha}\Big\|^{2}.\end{array}\label{eq:F-S-tilde-accent}
\end{equation}

\begin{thm}
(Linear convergence) Suppose Assumption \ref{assu:lin-conv-assu}
holds. Consider Algorithm \ref{alg:Ext-Dyk} being applied to solve
\eqref{eq:general-framework}. Suppose $S_{n,1}=V$ and $\mathbf{z}^{*}$
is a minimizer of $F_{S}(\cdot)$. Suppose also that $\cup_{w=1}^{\bar{w}}S_{n,w}\cap\bar{E}=\bar{E}_{n}$.
Then there is some $c\in(0,1)$ such that 
\[
\tilde{F}_{S}^{n+1,0}(\mathbf{z}^{n+1,0})-F_{S}(\mathbf{z}^{*})\leq c[\tilde{F}_{S}^{n,0}(\mathbf{z}^{n,0})-F_{S}(\mathbf{z}^{*})].
\]
\end{thm}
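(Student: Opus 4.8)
The plan is to read one outer iteration $n\mapsto n+1$ of Algorithm \ref{alg:Ext-Dyk} as a single sweep of a two-block alternating minimization of the dual cost $F_{S}$ in \eqref{eq:F-S-problem}: the \emph{node block} $\{\mathbf{z}_{i}\}_{i\in V}$ is refreshed at $w=1$ (the hypothesis $S_{n,1}=V$ forces Algorithm \ref{alg:subdiff-subalg} to update every node and, since $V=V_{4}$, to touch no edge), and the \emph{edge block} $\{\mathbf{z}_{\alpha}\}_{\alpha\in\bar{E}_{n}}$ is handled at $w\geq2$. I would track $\Delta^{n}:=\tilde{F}_{S}^{n,0}(\mathbf{z}^{n,0})-F_{S}(\mathbf{z}^{*})$, noting first that $\tilde{F}_{S}^{n,w}\geq F_{S}$ pointwise (since $\mathbf{f}_{i,n,w}\leq\mathbf{f}_{i}$ gives $\mathbf{f}_{i,n,w}^{*}\geq\mathbf{f}_{i}^{*}$), so $\Delta^{n}\geq0$, and that $\tilde{F}_{S}^{n,w}(\mathbf{z}^{n,w})$ is nonincreasing along the run by Theorem \ref{thm:convergence}(i) together with the objective-preserving reset in line 5 (recall $\tilde{F}_{S}^{n,w}=\frac{1}{2}\|\bar{\mathbf{x}}\|^{2}-F^{n,w}$). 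The goal is a one-sweep recursion $\Delta^{n+1}\leq c\,\Delta^{n}$. I would split $\Delta^{n}$, up to a model-gap remainder, into a \emph{function part} measuring $\sum_{i}f_{i}^{*}([\mathbf{z}_{i}^{n,0}]_{i})+\frac{|V|}{2}\|\hat{z}^{n,0}\|^{2}$ against its optimal value, and a \emph{consensus part} $\frac{1}{2}\mathrm{dist}(\mathbf{x}^{n,0},D)^{2}$, using $\frac{1}{2}\|\mathbf{x}\|^{2}=\frac{|V|}{2}\|\hat{z}\|^{2}+\frac{1}{2}\mathrm{dist}(\mathbf{x},D)^{2}$, the formulas \eqref{eq:D-formula-a}--\eqref{eq:D-formula-b}, and the fact that $\mathbf{x}^{*}$ is a consensus vector. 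Note that the vector $\mathbf{e}$ of the commentary before Lemma \ref{lem:gamma-ineq} is exactly the projection of $\mathbf{x}^{n,0}$ onto $D^{\perp}$, so $\sum_{i}\|[\mathbf{e}]_{i}\|^{2}=\mathrm{dist}(\mathbf{x}^{n,0},D)^{2}$.

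For the node block I would invoke Lemma \ref{lem:gamma-ineq} with $\mathbf{z}^{0}=\mathbf{z}^{n,0}$ and $\tilde{F}_{S}=\tilde{F}_{S}^{n,0}$. Its second inequality yields
\[
F_{S}(\mathbf{z}^{+})-F_{S}(\mathbf{z}^{*})\leq\gamma\,\Delta^{n}+M\sum_{i\in V}\|[\mathbf{e}]_{i}\|^{2}=\gamma\,\Delta^{n}+M\,\mathrm{dist}(\mathbf{x}^{n,0},D)^{2},
\]
where $\mathbf{z}^{+}$ is the ideal prox update with the \emph{true} $f_{i}^{*}$ at $[\bar{\mathbf{x}}-\mathbf{v}_{H}^{n,0}]_{i}$ (this identification follows from the cancellation $-\hat{z}^{0}+[\mathbf{z}_{i}^{0}+\mathbf{e}]_{i}=[\bar{\mathbf{x}}-\mathbf{v}_{H}^{0}]_{i}$). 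The algorithm does not compute $\mathbf{z}^{+}$: it minimizes the cutting-plane model $\max\{f_{i,n,0},\tilde{f}_{i,n,0}\}$ and installs the aggregate affine minorant $f_{i,n,1}$, which is precisely the bundle step analyzed in Lemma \ref{lem:subdiff-decrease}. Since $\tilde{F}_{S}^{n,1}\geq F_{S}$ pointwise, I cannot bound $\tilde{F}_{S}^{n,1}(\mathbf{z}^{n,1})$ above by $F_{S}(\mathbf{z}^{+})$ directly; instead the difference is the residual per-node model gap, which by Assumption \ref{assu:lin-conv-assu}(3) (smoothness with $\nabla f_{i}$ Lipschitz of modulus $1/\sigma$) and Lemma \ref{lem:subdiff-decrease}(2) contracts by a fixed factor $\rho\in(0,1)$ relative to the gap carried in from the previous loop. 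I would therefore carry this aggregate model gap as a third quantity alongside the function and consensus parts.

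For the edge block at $w\geq2$ I would use that $\mathbf{f}_{\alpha}=\delta_{H_{\alpha}}$ has conjugate $\delta_{H_{\alpha}^{\perp}}$, so the subproblem \eqref{eq:Dykstra-min-subpblm} restricted to edge variables is an exact (block) projection step of Dykstra/alternating-projection type onto the subspaces $H_{\alpha}$, with no affine model involved. Because $\bar{E}_{n}$ connects $V$ in the sense of Definition \ref{def:connects}, i.e.\ $\cap_{\alpha\in\bar{E}_{n}}H_{\alpha}=D$, and since $\cup_{w}S_{n,w}\cap\bar{E}=\bar{E}_{n}$ guarantees every such subspace is projected at least once per sweep, a full sweep contracts $\mathrm{dist}(\cdot,D)$ by the classical linear rate for alternating projections onto finitely many subspaces with intersection $D$; the rate may be taken uniform in $n$ because only finitely many admissible configurations $\bar{E}_{n}\subset\bar{E}$ occur. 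Complementarily, the duality-gap computation \eqref{eq:From-8} with $\mathbf{x}=\mathbf{x}^{*}$ gives $\mathrm{dist}(\mathbf{x}^{n,w},D)^{2}\leq\|\mathbf{x}^{n,w}-\mathbf{x}^{*}\|^{2}\leq2[F_{S}(\mathbf{z}^{n,w})-F_{S}(\mathbf{z}^{*})]$, which feeds the suboptimality back into a bound on the consensus part.

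Finally I would assemble a Lyapunov quantity $\Phi^{n}=A\,(\text{function part})+B\,\mathrm{dist}(\mathbf{x}^{n,0},D)^{2}+C\,(\text{model gap})$, equivalent to $\Delta^{n}$ up to fixed constants, and choose $A,B,C>0$ so that the node bound (contraction $\gamma$ of the function part, injection $M\,\mathrm{dist}^{2}$), the edge bound (linear contraction of $\mathrm{dist}^{2}$), and the model-gap contraction $\rho$ combine into $\Phi^{n+1}\leq c\,\Phi^{n}$ with $c\in(0,1)$; translating back gives the stated estimate. I expect the main obstacle to be exactly this joint balancing. On one hand the node update \emph{adds} the positive term $M\,\mathrm{dist}(\mathbf{x}^{n,0},D)^{2}$, and $M=(1-\gamma)[\frac{1}{\sigma}+\frac{1}{2}+\frac{\sigma}{4}]$ is large when $\sigma$ is small, so the crude bound $\mathrm{dist}^{2}\leq2\Delta^{n}$ is useless and one must weight $B$ large enough (relative to $A$) that the edge projections' geometric decay of $\mathrm{dist}(\cdot,D)$ absorbs the injection. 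On the other hand the per-node gap contraction of Lemma \ref{lem:subdiff-decrease}(2) is stated for a \emph{fixed} base point, whereas the effective base point $[\bar{\mathbf{x}}-\mathbf{v}_{H}^{n,0}]_{i}$ drifts with $\mathbf{v}_{H}$ between loops; hence the model-gap recursion is itself coupled to the consensus dynamics, and closing the three estimates into a single uniform factor $c$ is the crux of the argument.
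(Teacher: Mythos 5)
Your proposal correctly identifies the two ingredients the paper also uses for the node block --- Lemma \ref{lem:gamma-ineq} for the ideal update $\mathbf{z}^{+}$ and Lemma \ref{lem:subdiff-decrease}(2) for the contraction of the bundle-model gap --- and these combine, exactly as in the paper, into an estimate of the form $\tilde{F}_{S}^{n,1}(\mathbf{z}^{n,1})-F_{S}(\mathbf{z}^{*})\leq\gamma_{3}[\tilde{F}_{S}^{n,0}(\mathbf{z}^{n,0})-F_{S}(\mathbf{z}^{*})]+(1-\gamma_{2})M\sum_{i\in V}\|[\mathbf{e}]_{i}\|^{2}$ with $\gamma_{3}=1-(1-\gamma)(1-\gamma_{2})$. (Your worry about the prox center drifting between loops is already neutralized by Assumption \ref{assu:to-start-subalg}(1)--(2), which guarantee that $[\mathbf{z}_{i}^{n,0}]_{i}$ is the exact dual optimizer for the carried-over model at the sweep-$n$ prox center, so Lemma \ref{lem:subdiff-decrease}(2) applies within a single sweep.) The genuine gap is the part you yourself flag as ``the crux'': absorbing the injected term $M\sum_{i}\|[\mathbf{e}]_{i}\|^{2}=M\,d(\mathbf{x}^{n,0},D)^{2}$. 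The route you sketch for it does not work as stated. The claimed per-sweep geometric contraction of $d(\cdot,D)$ by ``classical alternating projections'' is not available under the theorem's hypotheses: only $S_{n,1}=V$ and $\cup_{w=1}^{\bar{w}}S_{n,w}\cap\bar{E}=\bar{E}_{n}$ are assumed, so node updates may be interleaved with the edge projections at $w\geq2$, and each such node update can move $\mathbf{x}^{n,w}$ away from $D$; the alternating-projection rate applies to an uninterrupted sequence of projections onto the $H_{\alpha}$, not to this interleaved sweep. And even granting such a contraction, the three-constant Lyapunov balancing is left open rather than carried out, so the proof does not close.

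The paper closes the argument without any Lyapunov function, via a dichotomy you could adopt. Either $(1-\gamma_{2})M\sum_{i}\|[\mathbf{e}]_{i}\|^{2}\leq\frac{1-\gamma_{3}}{2}[\tilde{F}_{S}^{n,0}(\mathbf{z}^{n,0})-F_{S}(\mathbf{z}^{*})]$, in which case the node-block estimate alone gives contraction by the factor $\frac{1+\gamma_{3}}{2}$; or the reverse inequality holds, in which case $d(\mathbf{x}^{n,0},D)^{2}$ is bounded below by a fixed multiple of the gap, and one shows the objective must then drop by a proportional amount \emph{during} the sweep: by linear regularity (Hoffman's lemma) there is $\alpha^{*}\in\bar{E}_{n}$ with $\kappa\,d(\mathbf{x}^{n,0},D)\leq d(\mathbf{x}^{n,0},H_{\alpha^{*}})\leq\|\mathbf{x}^{n,0}-\mathbf{x}^{n,w_{\alpha^{*}}}\|$, since $\mathbf{x}^{n,w_{\alpha^{*}}}\in H_{\alpha^{*}}$ once that edge is processed, and Theorem \ref{thm:convergence}(i) with Cauchy--Schwarz gives $\tilde{F}_{S}^{n,0}(\mathbf{z}^{n,0})-\tilde{F}_{S}^{n+1,0}(\mathbf{z}^{n+1,0})\geq\frac{1}{2\bar{w}}\|\mathbf{x}^{n,0}-\mathbf{x}^{n,w_{\alpha^{*}}}\|^{2}$. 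This mechanism needs only that each edge of $\bar{E}_{n}$ is visited once per sweep, requires no contraction of the distance itself, and sidesteps the constant-balancing entirely; without it (or an equivalent device) your argument remains incomplete.
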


\begin{proof}
Let $\mathbf{z}^{+}\in\mathbf{X}^{|V\cup\bar{E}|}$ satisfy \eqref{eq:z-plus-minimizer},
where $\mathbf{z}^{0}=\mathbf{z}^{n,0}$ and $\hat{z}^{0}\overset{\eqref{eq:z-hat-zero}}{=}\frac{1}{|V|}\sum_{i\in V}[\mathbf{z}_{i}^{0}-\bar{\mathbf{x}}]_{i}$
like in Lemma \ref{lem:gamma-ineq}. Lemma \ref{lem:gamma-ineq} shows
that there is a $\gamma\in(0,1)$ such that 
\[
\begin{array}{c}
F_{S}(\mathbf{z}^{+})-F_{S}(\mathbf{z}^{*})\overset{\eqref{eq:lemma-lin-conv-last}}{\leq}\gamma[\tilde{F}_{S}^{n,0}(\mathbf{z}^{n,0})-F_{S}(\mathbf{z}^{*})]+M\underset{i\in V}{\sum}\|\mathbf{e}_{i}\|^{2}.\end{array}
\]
Let $p_{i}=-\hat{z}^{0}+[\mathbf{z}_{i}^{0}+\mathbf{e}]_{i}$, which
is the proximal center in the formula \eqref{eq:z-plus-minimizer}.
We make use of Lemma \ref{lem:subdiff-decrease}(2) to see that there
is a constant $\gamma_{2}\in[0,1)$ such that, for all $i\in V$,
\begin{eqnarray}
 &  & \begin{array}{c}
[f_{i,n,1}^{*}([\mathbf{z}_{i}^{n,1}]_{i})+\frac{1}{2}\|p_{i}-[\mathbf{z}_{i}^{n,1}]_{i}\|^{2}]-[f_{i}^{*}([\mathbf{z}_{i}^{+}]_{i})+\frac{1}{2}\|p_{i}-[\mathbf{z}_{i}^{+}]_{i}\|^{2}]\end{array}\label{eq:lin-conv-component}\\
 & \leq & \begin{array}{c}
\gamma_{2}\big[[f_{i,n,0}^{*}([\mathbf{z}_{i}^{n,0}]_{i})+\frac{1}{2}\|p_{i}-[\mathbf{z}_{i}^{n,0}]_{i}\|^{2}]-[f_{i}^{*}([\mathbf{z}_{i}^{+}]_{i})+\frac{1}{2}\|p_{i}-[\mathbf{z}_{i}^{+}]_{i}\|^{2}]\big].\end{array}\nonumber 
\end{eqnarray}
Summing \eqref{eq:lin-conv-component} over all $i\in V$ applied
to \eqref{eq:F-S-tilde-accent} gives 
\begin{equation}
\tilde{F}_{S}^{n,1}(\mathbf{z}^{n,1})-F_{S}(\mathbf{z}^{+})\leq\gamma_{2}[\tilde{F}_{S}^{n,0}(\mathbf{z}^{n,0})-F_{S}(\mathbf{z}^{+})].\label{eq:gamma-2-ineq}
\end{equation}
Then we have 
\begin{eqnarray}
 &  & \tilde{F}_{S}^{n,1}(\mathbf{z}^{n,1})-F_{S}(\mathbf{z}^{*})\label{eq:chunk-ineq}\\
 & = & \tilde{F}_{S}^{n,1}(\mathbf{z}^{n,1})-F_{S}(\mathbf{z}^{+})+F_{S}(\mathbf{z}^{+})-F_{S}(\mathbf{z}^{*})\nonumber \\
 & \overset{\eqref{eq:gamma-2-ineq}}{\leq} & \gamma_{2}[\tilde{F}_{S}^{n,0}(\mathbf{z}^{n,0})-F_{S}(\mathbf{z}^{+})]+F_{S}(\mathbf{z}^{+})-F_{S}(\mathbf{z}^{*})\nonumber \\
 & = & \gamma_{2}[\tilde{F}_{S}^{n,0}(\mathbf{z}^{n,0})-F_{S}(\mathbf{z}^{*})]+(1-\gamma_{2})[F_{S}(\mathbf{z}^{+})-F_{S}(\mathbf{z}^{*})]\nonumber \\
 & \overset{\scriptsize{\mbox{Lem }\ref{lem:gamma-ineq}}}{\leq} & \underbrace{[1-(1-\gamma)(1-\gamma_{2})]}_{\gamma_{3}}[\tilde{F}_{S}^{n,0}(\mathbf{z}^{n,0})-F_{S}(\mathbf{z}^{*})]+(1-\gamma_{2})M\sum_{i\in V}\|\mathbf{e}_{i}\|^{2}.\nonumber 
\end{eqnarray}
Since $\gamma<1$ and $\gamma_{2}<1$, the $\gamma_{3}$ as marked
above satisfies $\gamma_{3}\in[0,1)$. We now consider 2 cases. 

\textbf{Case 1: $(1-\gamma_{2})M\sum_{i\in V}\|\mathbf{e}_{i}\|^{2}\leq\frac{1-\gamma_{3}}{2}[\tilde{F}_{S}^{n,0}(\mathbf{z}^{n,0})-F_{S}(\mathbf{z}^{*})]$. }

In this case, we make use of Theorem \ref{thm:convergence}(i) to
get 
\[
\begin{array}{c}
\tilde{F}_{S}^{n+1,0}(\mathbf{z}^{n+1,0})-F_{S}(\mathbf{z}^{*})\leq\tilde{F}_{S}^{n,1}(\mathbf{z}^{n,1})-F_{S}(\mathbf{z}^{*})\overset{\eqref{eq:chunk-ineq}}{\leq}\frac{1+\gamma_{3}}{2}[\tilde{F}_{S}^{n,0}(\mathbf{z}^{n,0})-F_{S}(\mathbf{z}^{*})].\end{array}
\]

\textbf{Case 2: $(1-\gamma_{2})M\sum_{i\in V}\|\mathbf{e}_{i}\|^{2}\geq\frac{1-\gamma_{3}}{2}[\tilde{F}_{S}^{n,0}(\mathbf{z}^{n,0})-F_{S}(\mathbf{z}^{*})]$. }

We recall that $\mathbf{x}^{n,w}\overset{\eqref{eq_m:all_acronyms}}{=}\bar{\mathbf{x}}-\sum_{\alpha\in V\cup\bar{E}}\mathbf{z}_{\alpha}^{n,w}$.
The value $\sum_{i\in V}\|[\mathbf{e}]_{i}\|^{2}$ equals $d(\mathbf{x}^{n,0},D)^{2}$,
where $D$ is the diagonal set \eqref{eq:D-formula-a}. By the Hoffman
lemma and the fact that $\bar{E}_{n}$ connects $V$, there is a constant
$\kappa$ such that $\kappa d(\mathbf{x},D)\leq\max_{\alpha\in\bar{E}_{n}}d(\mathbf{x},H_{\alpha})$.
Let $\alpha^{*}\in\bar{E}_{n}$ be such that $\kappa d(\mathbf{x}^{n,0},D)\leq d(\mathbf{x}^{n,0},H_{\alpha^{*}})$.
Let $w_{\alpha^{*}}$ be in $\{1,\dots,\bar{w}\}$ such that $\alpha^{*}\in S_{n,w_{\alpha^{*}}}$.
We have $\mathbf{x}^{n,w_{\alpha^{*}}}\in H_{\alpha^{*}}$. (To see
this, note that the dual of \eqref{eq:Dykstra-min-subpblm}, is a
primal problem involving $\delta_{H_{\alpha^{*}}}(\cdot)$.) To summarize,
\begin{eqnarray}
\begin{array}{c}
\frac{1-\gamma_{3}}{2M(1-\gamma_{2})}[\tilde{F}_{S}^{n,0}(\mathbf{z}^{n,0})-F_{S}(\mathbf{z}^{*})]\overset{\scriptsize{\mbox{Case 2}}}{\leq}\underset{i\in V}{\overset{\phantom{i\in V}}{\sum}}\|[\mathbf{e}]_{i}\|^{2}\end{array} & = & \begin{array}{c}
d(\mathbf{x}^{n,0},D)^{2}\end{array}\label{eq:lin-reg-arg}\\
\begin{array}{c}
\leq\frac{1}{\kappa^{2}}d(\mathbf{x}^{n,0},H_{\alpha^{*}})^{2}\end{array} & \leq & \begin{array}{c}
\frac{1}{\kappa^{2}}\|\mathbf{x}^{n,0}-\mathbf{x}^{n,w_{\alpha^{*}}}\|^{2}.\end{array}\nonumber 
\end{eqnarray}
 Another ingredient of our proof is to make use of Theorem \ref{thm:convergence}(i)
to get  
\begin{equation}
\begin{array}{c}
\tilde{F}_{S}^{n+1,0}(\mathbf{z}^{n+1,0})\leq\tilde{F}_{S}^{n,w_{\alpha^{*}}}(\mathbf{z}^{n,w_{\alpha^{*}}})\overset{\scriptsize{\mbox{Thm \ref{thm:convergence}(i)}}}{\leq}\tilde{F}_{S}^{n,0}(\mathbf{z}^{n,0})-\frac{1}{2}\underset{w=0}{\overset{w_{\alpha^{*}}-1}{\sum}}\|\mathbf{x}^{n,w}-\mathbf{x}^{n,w+1}\|^{2}.\end{array}\label{eq:linreg-dec}
\end{equation}
Next, 
\begin{eqnarray}
 &  & \begin{array}{c}
\frac{1}{2}\underset{w=0}{\overset{w_{\alpha^{*}}-1}{\sum}}\|\mathbf{x}^{n,w}-\mathbf{x}^{n,w+1}\|^{2}\geq\frac{1}{2w_{\alpha^{*}}}\left\Vert \underset{w=0}{\overset{w_{\alpha^{*}}-1}{\sum}}[\mathbf{x}^{n,w}-\mathbf{x}^{n,w+1}]\right\Vert ^{2}\end{array}\nonumber \\
 & \geq & \begin{array}{c}
\frac{1}{2\bar{w}}\|\mathbf{x}^{n,0}-\mathbf{x}^{n,w_{\alpha^{*}}}\|^{2}\end{array}\overset{\eqref{eq:lin-reg-arg}}{\geq}\begin{array}{c}
\frac{\kappa^{2}(1-\gamma_{3})}{4M\bar{w}(1-\gamma_{2})}[\tilde{F}_{S}^{n,0}(\mathbf{z}^{n,0})-F_{S}(\mathbf{z}^{*})].\end{array}\label{eq:linreg-squares}
\end{eqnarray}
So 
\[
\begin{array}{c}
\tilde{F}_{S}^{n+1,0}(\mathbf{z}^{n+1,0})-F_{S}(\mathbf{z}^{*})\overset{\eqref{eq:linreg-dec},\eqref{eq:linreg-squares}}{\leq}\left(1-\frac{\kappa^{2}(1-\gamma_{3})}{4M\bar{w}(1-\gamma_{2})}\right)[\tilde{F}_{S}^{n,0}(\mathbf{z}^{n,0})-F_{S}(\mathbf{z}^{*})].\end{array}
\]
This completes the proof of linear convergence. 
\end{proof}
\begin{rem}
The linear convergence rates in \eqref{eq:lin-reg-arg} can plausibly
be refined with the study of effective resistances in \cite{Aybat_Gurb_effective_resistance_2017}.
\end{rem}

\section{\label{sec:O-1-k-conv-of-smooth-proximable-case}$O(1/k)$ convergence
when all functions are either smooth or proximable}

In this section, we prove the $O(1/k)$ convergence rate of the dual
objective value when all $f_{i}(\cdot)$ are smooth for all $i\in V_{4}$.
We begin by discussing conditions ensuring the boundedness of $\{\mathbf{z}^{n,w}\}$
before our main result. 

\subsection{On the boundedness of $\{\mathbf{z}^{n,w}\}$}

In this subsection, we discuss a standard constraint qualification
that ensures the boundedness of $\{\mathbf{z}^{n,w}\}$. 

We write down a lemma on functions whose domain is not the entire
space. 
\begin{lem}
\label{lem:domain-normals}Suppose $X$ is a finite dimensional Hilbert
space, and $f:X\to\mathbb{R}$ is a closed convex function. Suppose
$\{x_{i}\}_{i=1}^{\infty}$ and $\{z_{i}\}_{i=1}^{\infty}$ are sequences
such that $z_{i}\in\partial f(x_{i})$ for all $i\geq1$. Suppose
that $\lim_{i\to\infty}x_{i}$ exists, say $x^{*}$, and $\lim_{i\to\infty}\|z_{i}\|=\infty$.
Then a cluster point of $\{\frac{z_{i}}{\|z_{i}\|}\}$ lies in $N_{\scriptsize{\cl(\dom(f))}}(x^{*})$.
\end{lem}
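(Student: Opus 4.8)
The plan is to show that any cluster point $\bar d$ of the normalized subgradients satisfies the variational characterization of the normal cone $N_{\cl(\dom(f))}(x^*)$, namely that $\langle \bar d, y-x^*\rangle\le 0$ for every $y\in\cl(\dom(f))$. First I would record the easy preliminaries: since $z_i\in\partial f(x_i)$ forces $x_i\in\dom(f)$, and $x_i\to x^*$, the limit $x^*$ lies in $\cl(\dom(f))$, so the normal cone at $x^*$ is meaningful; and since the vectors $z_i/\|z_i\|$ lie on the unit sphere of the finite-dimensional space $X$, they admit a convergent subsequence whose limit $\bar d$ has $\|\bar d\|=1$ (in particular $\bar d\neq 0$, consistent with $x^*$ being a boundary point of the domain). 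I pass to this subsequence for the remainder of the argument.

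Next I would feed a fixed test point $y\in\dom(f)$ into the subgradient inequality $f(y)\ge f(x_i)+\langle z_i, y-x_i\rangle$, rearrange to $\langle z_i, y-x_i\rangle\le f(y)-f(x_i)$, and divide through by $\|z_i\|>0$ to obtain
\[
\Big\langle \tfrac{z_i}{\|z_i\|},\,y-x_i\Big\rangle\;\le\;\frac{f(y)-f(x_i)}{\|z_i\|}.
\]
As $i\to\infty$ the left-hand side converges to $\langle \bar d, y-x^*\rangle$, since $z_i/\|z_i\|\to\bar d$ and $x_i\to x^*$. It then remains to show that the right-hand side has nonpositive $\limsup$, which yields $\langle \bar d, y-x^*\rangle\le 0$.

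The one genuine obstacle is controlling $f(x_i)$, which a priori could misbehave as $x_i$ approaches the boundary of the domain. (If $f(x_i)\to+\infty$ that only helps, so the real worry is $f(x_i)$ drifting to $-\infty$.) Here I would invoke the standard fact that a proper closed convex function on a finite-dimensional space admits a continuous affine minorant, say $f(\cdot)\ge\langle a,\cdot\rangle+b$. Since $\{x_i\}$ converges it is bounded, so $f(x_i)\ge\langle a,x_i\rangle+b$ is bounded below by a constant; combined with $\|z_i\|\to\infty$ and $f(y)$ fixed, this gives $\limsup_i\,(f(y)-f(x_i))/\|z_i\|\le 0$, and hence $\langle \bar d, y-x^*\rangle\le 0$ for every $y\in\dom(f)$. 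This minorant step is the crux of the proof; everything else is routine.

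Finally I would extend the inequality from $\dom(f)$ to its closure: given $y\in\cl(\dom(f))$, choose $y_k\in\dom(f)$ with $y_k\to y$ and pass to the limit in $\langle \bar d, y_k-x^*\rangle\le 0$ using continuity of the inner product. This establishes $\langle \bar d, y-x^*\rangle\le 0$ for all $y\in\cl(\dom(f))$, which is exactly the statement $\bar d\in N_{\cl(\dom(f))}(x^*)$, completing the argument.
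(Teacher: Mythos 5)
Your proof is correct, but it takes a genuinely different route from the paper's. The paper lifts everything to the epigraph: it uses the characterization $z\in\partial f(x)\iff(z,-1)\in N_{\epi(f)}(x,f(x))$, passes to a subsequence along which $f(x_i)$ converges in $[-\infty,+\infty]$, and then argues two cases --- if $\lim_i f(x_i)$ is finite it invokes the closedness of the normal cone correspondence of the convex set $\epi(f)$ to conclude that the horizontal vector $(z^*,0)$ is normal to the epigraph at $(x^*,f^*)$, hence $z^*\in N_{\cl(\dom(f))}(x^*)$; if $\lim_i f(x_i)=+\infty$ it tests the normal cone inequality against the points $(x',f(x_i))$ and passes to the limit, which is essentially your computation in disguise. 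You avoid the epigraph and the case split entirely by working directly with the subgradient inequality $f(y)\ge f(x_i)+\langle z_i,y-x_i\rangle$ and disposing of the potentially troublesome term $f(x_i)$ with a single uniform lower bound coming from an affine minorant of the proper closed convex $f$ (valid since $\{x_i\}$ is bounded); after dividing by $\|z_i\|\to\infty$ the right-hand side has nonpositive $\limsup$ in both of the paper's cases at once. What your approach buys is a shorter, more elementary and case-free argument that needs only the existence of an affine minorant rather than the outer semicontinuity of normal cones; what the paper's approach buys is that in the finite-limit case the conclusion drops out with no estimate at all, and the epigraph picture makes transparent why a ``blowing up'' subgradient direction must be normal to the domain. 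Your closing step --- extending $\langle\bar d,y-x^*\rangle\le0$ from $\dom(f)$ to its closure by continuity --- is the same harmless density argument the paper leaves implicit.
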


\begin{proof}
We make use of the fact that $z\in\partial f(x)$ if and only if $(z,-1)\in N_{\scriptsize{\epi(f)}}(x,f(x))$.
By choosing subsequences, we can assume that $\lim_{i\to\infty}f(x_{i})$
exists as either a finite number or $\infty$. We can assume that
$\lim_{i\to\infty}\frac{z_{i}}{\|z_{i}\|}$ exists and equals $z^{*}$. 

Suppose $\lim_{i\to\infty}f(x_{i})$ is a finite number, say $f^{*}$.
In this case, $(x^{*},f^{*})$ lies in the epigraph of $f(\cdot)$.
By the closedness of the normal cones, $\lim_{i\to\infty}\frac{(z_{i},-1)}{\|z_{i}\|}=(z^{*},0)$
is a normal vector of $\epi(f)$ at $(x^{*},f^{*})$. This implies
that $z^{*}$ lies in $N_{\scriptsize{\cl(\dom(f))}}(x^{*})$. 

Suppose $\lim_{i\to\infty}f(x_{i})$ is infinity. Take any $x'\in\dom(f)$,
Let $I$ be large enough so that $f(x_{i})>f(x')$ for all $i>I$.
Then the point $(x',f(x_{i}))$ would lie in $\epi(f)$. Then 
\[
\begin{array}{c}
\left\langle \frac{(z_{i},-1)}{\|z_{i}\|},\big[\big(x_{i},f(x_{i})\big)-\big(x',f(x_{i})\big)\big]\right\rangle \leq0\mbox{ for all }i>I,\end{array}
\]
which gives $\langle\frac{z_{i}}{\|z_{i}\|},x_{i}-x'\rangle\leq0$
for all $i>I$. Taking limits gives $\langle z^{*},x^{*}-x'\rangle\leq0$,
which implies that $z^{*}\in N_{\scriptsize{\cl(\dom(f))}}(x^{*})$. 
\end{proof}
With the above lemma, we can prove the following result on the boundedness
of the iterates $\{\mathbf{z}_{\alpha}\}_{\alpha\in V_{1}\cup V_{2}}$
under a constraint qualification. 
\begin{thm}
\label{thm:CQ-gives-bdd-iters}Suppose we are under the setting of
Example \ref{exa:distrib-dyk}, and let $x^{*}$ be the primal minimizer
to \eqref{eq:distrib-dyk-primal-pblm}. Suppose that the condition
\begin{eqnarray}
 &  & \begin{array}{c}
y_{i}\in N_{\scriptsize{\cl(\dom(f_{i}))}}(x^{*})\mbox{ for all }i\in V_{1}\cup V_{2}\mbox{ and }\underset{i\in V_{1}\cup V_{2}}{\sum}y_{i}=0\end{array}\nonumber \\
 & \mbox{ implies } & \begin{array}{c}
y_{i}=0\mbox{ for all }i\in V_{1}\cup V_{2}.\end{array}\label{eq:CQ}
\end{eqnarray}
 holds. Then the iterates $\{\mathbf{z}_{i}^{n,w}\}_{i\in V}$ of
Algorithm \ref{alg:Ext-Dyk} are bounded.
\end{thm}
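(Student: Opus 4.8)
The plan is to argue by contradiction: assume the iterates are unbounded, normalize a divergent subsequence, and extract in the limit a nontrivial solution of the homogeneous system that the constraint qualification \eqref{eq:CQ} forbids. The only coordinates at issue are those indexed by $V_1\cup V_2$, since Theorem \ref{thm:convergence}(iii) already bounds $\mathbf{z}_i^{n,w}$ for $i\in V_3\cup V_4$, and by Proposition \ref{prop:sparsity} the only possibly-nonzero component of $\mathbf{z}_i^{n,w}$ is $[\mathbf{z}_i^{n,w}]_i=:z_i$. So it suffices to bound $\{z_i\}$ for $i\in V_1\cup V_2$.

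First I would record three structural facts. (a) \emph{Boundedness of the aggregate.} Using Proposition \ref{prop:sparsity}, the membership $\mathbf{v}_H^{n,w}\in D^\perp$, and \eqref{eq:D-formula-b}, one computes $\sum_{i\in V}z_i=\sum_{i\in V}[\mathbf{v}_A^{n,w}]_i$; since $\mathbf{v}_A^{n,w}$ is bounded by Theorem \ref{thm:convergence}(ii) and the $V_3\cup V_4$ blocks are bounded, the partial sum $\sum_{i\in V_1\cup V_2}z_i$ is bounded uniformly in $(n,w)$. (b) \emph{The subgradient relation.} Writing the first-order condition of the block update \eqref{eq:Dykstra-min-subpblm} for a coordinate $i\in S_{n,w}\cap(V_1\cup V_2)$, where $\mathbf{f}_{i,n,w}=\mathbf{f}_i$, and applying conjugacy gives $z_i\in\partial f_i([\mathbf{x}^{n,w}]_i)$ at every step where $i$ is updated. (c) \emph{Primal convergence.} $\mathbf{x}^{n,w}\to\mathbf{x}^*$ with $[\mathbf{x}^*]_i=x^*$ for all $i$, which I cite from the convergence-to-primal-minimizer content of Theorem \ref{thm:convergence} (equivalently, the duality-gap bound in the final remark).

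Now suppose the claim fails. Because $\mathbf{z}_i^{n,w}$ is constant between successive updates and the initial values are fixed and finite, unboundedness forces a sequence of update indices $(n_k,w_k)$ with $M_k:=\max_{i\in V_1\cup V_2}\|z_i^{n_k,w_k}\|\to\infty$. Passing to a subsequence, the index attaining the maximum is a fixed $i_0$, and $z_i^{n_k,w_k}/M_k\to y_i$ for each $i\in V_1\cup V_2$. Then $\|y_{i_0}\|=1$, so the $y_i$ are not all zero; dividing fact (a) by $M_k$ yields $\sum_{i\in V_1\cup V_2}y_i=0$; and for each $i$ either $y_i=0$ (which lies trivially in the normal cone) or $\|z_i^{n_k,w_k}\|\to\infty$, in which case Lemma \ref{lem:domain-normals}, applied along the last-update subsequence whose base points converge to $x^*$ by (c), places the cluster direction—and hence its nonnegative multiple $y_i$—in $N_{\cl(\dom(f_i))}(x^*)$. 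This produces a nonzero solution of the homogeneous system in \eqref{eq:CQ}, a contradiction.

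The main obstacle is the bookkeeping for \emph{stale} coordinates: at a generic index $(n_k,w_k)$ a node $i$ need not have been selected, so $z_i^{n_k,w_k}$ equals its value from an earlier update step $(n_k',w_k')$. I must argue that if $\|z_i^{n_k,w_k}\|\to\infty$ then these last-update indices also tend to infinity (the alternative, a bounded last-update index, gives a fixed finite value), so that the base points $[\mathbf{x}^{n_k',w_k'}]_i$ still converge to $x^*$ and Lemma \ref{lem:domain-normals} applies with base point exactly $x^*$. A secondary point to check is that, under the product structure, $\partial\mathbf{f}_i$ restricted to the $i$-th block is genuinely $\partial f_i$, so that the relation in (b) is the subgradient membership that Lemma \ref{lem:domain-normals} requires.
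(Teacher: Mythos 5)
Your proposal is correct in outline and follows the same contradiction--normalization--Lemma \ref{lem:domain-normals}--sum-to-zero skeleton as the paper, but it diverges at one substantive point: where the normal-cone membership is anchored. You invoke primal convergence $\mathbf{x}^{n,w}\to\mathbf{x}^{*}$ (your fact (c)) so that Lemma \ref{lem:domain-normals} delivers $y_{i}\in N_{\scriptsize{\cl(\dom(f_{i}))}}(x^{*})$ directly. The paper deliberately avoids this: it only extracts a cluster point $\hat{\mathbf{x}}$ of $\{\mathbf{x}^{n_{k},\bar{w}}\}$ (available from the boundedness in Theorem \ref{thm:convergence}(ii)), shows $\hat{\mathbf{x}}\in D$ using the vanishing of successive differences from Theorem \ref{thm:convergence}(i), obtains normals at $\hat{x}$, and then \emph{transfers} them to $x^{*}$ by the orthogonality argument in \eqref{eq:zero-sum-normals}--\eqref{eq:normal-formula}: since $\sum_{i}\langle[\hat{\mathbf{z}}_{i}]_{i},\hat{x}-x^{*}\rangle=0$ while each summand is $\leq0$, each is $0$, which lets the normal-cone inequality at $\hat{x}$ be rewritten as one at $x^{*}$. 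Your route buys a shorter proof at the cost of a stronger external input; the paper's route buys self-containment relative to the three listed items of Theorem \ref{thm:convergence}. Two caveats on your version. First, your parenthetical justification of (c) via ``the duality-gap bound in the final remark'' is circular: that bound only yields $\mathbf{x}^{n,w}\to\mathbf{x}^{*}$ once one knows the dual value converges to the optimum, which is not among the stated items (i)--(iii) and is close to what the convergence-rate analysis is itself after. The primal convergence you need is the Boyle--Dykstra-type statement from the cited prior work (which the paper does assert in prose immediately after this theorem's proof), so cite that rather than the duality gap. Second, your ``stale coordinate'' worry largely dissolves under the standing hypothesis $[\cup_{w=1}^{\bar{w}}S_{n,w}]\supset V$ of Theorem \ref{thm:convergence}: every node is updated in every sweep, so the last-update index is $p(n,i)$ within sweep $n$ and the base points $\mathbf{x}^{n_{k},p(n_{k},i)}$ converge to the same limit as $\mathbf{x}^{n_{k},\bar{w}}$ by Theorem \ref{thm:convergence}(i); your fallback argument about bounded last-update indices is not needed. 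With the citation for primal convergence repaired, your proof is valid.
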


\begin{proof}
Seeking a contradiction, suppose $\{\mathbf{z}_{i}^{n,w}\}_{n,w}$
is not bounded for some $i\in V$. This means that we can find a subsequence
$\{n_{k}\}_{k}$ such that 
\begin{equation}
\begin{array}{c}
\underset{k\to\infty}{\overset{\phantom{k\to\infty}}{\lim}}[\underset{j\in V}{\max}\|\mathbf{z}_{j}^{n_{k},\bar{w}}\|]=\infty.\end{array}\label{eq:norms-goto-inf}
\end{equation}
Theorem \ref{thm:convergence}(ii), formula \eqref{eq_m:all_acronyms}
and the finite dimensionality of $\mathbf{X}$ ensures that $\{\mathbf{x}^{n_{k},\bar{w}}\}_{n_{k}=1}^{\infty}$
has a cluster point. We can choose a further subsequence if necessary
so that $\lim_{k\to\infty}\mathbf{x}^{n_{k},\bar{w}}=\hat{\mathbf{x}}$
for some $\hat{\mathbf{x}}\in\mathbf{X}$. Theorem \ref{thm:convergence}(i)
implies that $\lim_{k\to\infty}\|\mathbf{x}^{n_{k},p(n_{k},\alpha)}-\mathbf{x}^{n_{k},\bar{w}}\|=0$
for all $\alpha\in V\cup\bar{E}$, so $\lim_{k\to\infty}\mathbf{x}^{n_{k},p(n_{k},\alpha)}=\hat{\mathbf{x}}$.
Since $\mathbf{x}^{n_{k},p(n_{k},\alpha)}\in H_{\alpha}$ for all
$\alpha\in\bar{E}$, going back to the primal problem associated with
\eqref{eq:Dykstra-min-subpblm} gives us $\hat{\mathbf{x}}\in\cap_{\alpha\in\bar{E}}H_{\alpha}$.
Let $\hat{x}\in\mathbb{R}^{m}$ be any component of $\hat{\mathbf{x}}\in[\mathbb{R}^{m}]^{|V|}$.
(Note that all components in $\hat{\mathbf{x}}\in[\mathbb{R}^{m}]^{|V|}$
are equal since $\hat{\mathbf{x}}\in\cap_{\alpha\in\bar{E}}H_{\alpha}$.) 

Assume, by taking further subsequences if necessary, that for all
$i\in V$, 
\begin{equation}
\begin{array}{c}
\hat{\mathbf{z}}_{i}:=\underset{k\to\infty}{\lim}\frac{\mathbf{z}_{i}^{n_{k},\bar{w}}}{\max_{j\in V}\|\mathbf{z}_{j}^{n_{k},\bar{w}}\|}\end{array}\label{eq:z-i-hat}
\end{equation}
exists as a limit. From Lemma \ref{lem:domain-normals}, we have $[\hat{\mathbf{z}}_{i}]_{i}\in N_{\scriptsize{\cl(\dom(f_{i}))}}(\hat{x})$
for all $i\in V$. Recall that $\{\mathbf{v}_{A}^{n,\bar{w}}\}$ is
a bounded set by Theorem \ref{thm:convergence}(ii), so by \eqref{eq_m:all_acronyms},
$\{\sum_{\alpha\in V\cup\bar{E}}\mathbf{z}_{\alpha}^{n,\bar{w}}\}_{n=1}^{\infty}$
is bounded. Note that for all $\alpha\in\bar{E}$, $\mathbf{z}_{\alpha}^{n,w}\in H_{\alpha}^{\perp}\subset D^{\perp}$.
We then have 
\[
\begin{array}{c}
\underset{j\in V}{\sum}\Big[\underset{\alpha\in V\cup\bar{E}}{\sum}\mathbf{z}_{\alpha}^{n,\bar{w}}\Big]_{j}\overset{\eqref{eq:D-formula-b}}{=}\underset{i\in V}{\sum}\underset{j\in V}{\sum}[\mathbf{z}_{i}^{n,\bar{w}}]_{j}\overset{\scriptsize{\mbox{Propn }\ref{prop:sparsity}}}{=}\underset{i\in V}{\sum}[\mathbf{z}_{i}^{n,\bar{w}}]_{i},\end{array}
\]
so $\sum_{i\in V}[\mathbf{z}_{i}^{n,\bar{w}}]_{i}$ is bounded. The
formulas \eqref{eq:norms-goto-inf} and \eqref{eq:z-i-hat} imply
that $\sum_{i\in V}[\hat{\mathbf{z}}_{i}]_{i}$ equals zero.

Let $x^{*}\in\mathbb{R}^{m}$ be any component of the primal optimal
solution $\mathbf{x}^{*}$. We now show that $[\hat{\mathbf{z}}_{i}]_{i}\in N_{\scriptsize{\cl(\dom(f_{i}))}}(x^{*})$.
On the one hand, we have 
\begin{equation}
\begin{array}{c}
\left\langle \underset{i\in V}{\sum}[\hat{\mathbf{z}}_{i}]_{i},\hat{x}-x^{*}\right\rangle =\left\langle 0,\hat{x}-x^{*}\right\rangle =0.\end{array}\label{eq:zero-sum-normals}
\end{equation}
On the other hand since $[\hat{\mathbf{z}}_{i}]_{i}\in N_{\scriptsize{\cl(\dom(f_{i}))}}(\hat{x})$
and $x^{*}\in\cap_{i\in V}\cl(\dom(f_{i}))$, we have 
\begin{equation}
\langle[\hat{\mathbf{z}}_{i}]_{i},\hat{x}-x^{*}\rangle\leq0.\label{eq:normal-formula}
\end{equation}
One can easily check that \eqref{eq:zero-sum-normals} and \eqref{eq:normal-formula}
implies that $\langle[\hat{\mathbf{z}}_{i}]_{i},\hat{x}-x^{*}\rangle=0$.
Next, for any $x\in\dom(f_{i})$, we have 
\[
\langle[\hat{\mathbf{z}}_{i}]_{i},x^{*}-x\rangle=\langle[\hat{\mathbf{z}}_{i}]_{i},x^{*}-\hat{x}\rangle+\langle[\hat{\mathbf{z}}_{i}]_{i},\hat{x}-x\rangle\overset{\eqref{eq:zero-sum-normals},\eqref{eq:normal-formula}}{\leq}0.
\]
Thus $[\hat{\mathbf{z}}_{i}]_{i}\in N_{\scriptsize{\cl(\dom(f_{i}))}}(x^{*})$.
This means that the constraint qualification \eqref{eq:CQ} fails
at $x^{*}$, which is a contradiction.
\end{proof}
\begin{rem}
\label{rem:BBL99}(Constraint qualifications on intersections of sets)
The assumption in Theorem \ref{thm:CQ-gives-bdd-iters} cannot be
easily weakened because the constraint qualification \eqref{eq:CQ}
is well known to be related to sensitivity analysis issues related
to the intersection of convex sets. For the convex case, we refer
to \cite{BBL99}. Such results have been extended to the nonconvex
case in, for example, \cite{Kruger_06} and the references mentioned
within.
\end{rem}

The theory on Dykstra's algorithm says that for any $w\in\{1,\dots,\bar{w}\}$,
$\lim_{n\to\infty}\mathbf{x}^{n,w}$ exists and is $x^{*}$, the minimizer
of the primal problem \eqref{eq:distrib-dyk-primal-pblm}. Hence under
the constraint qualification \eqref{eq:CQ}, the iterates $\{[\mathbf{z}_{i}^{n,w}]_{i}\}_{(n,w)}$
are bounded for all $i\in V$. 

\subsection{The $O(1/k)$ convergence}

Throughout this section, we make an assumption similar to Assumption
\ref{assu:lin-conv-assu}.
\begin{assumption}
\label{assu:O1-k-assu}For the problem \eqref{eq:general-framework},
suppose Assumption \ref{assu:lin-conv-assu}(2) and (3) hold. 
\end{assumption}

Recall the definition of $F_{S}(\cdot)$  in \eqref{eq:F-S-problem}.
Just like in Lemma \ref{lem:gamma-ineq}, we define $\mathbf{z}_{\alpha}^{0}\in[\mathbb{R}^{m}]^{|V|}$
and $\mathbf{z}_{\alpha}^{+}\in[\mathbb{R}^{m}]^{|V|}$ for all $\alpha\in V\cup\bar{E}$
so that $\mathbf{z}_{e}^{+}=\mathbf{z}_{e}^{0}$ for all $e\in\bar{E}$,
and $\mathbf{z}_{i}^{+}$ are defined through $\{\mathbf{z}_{\alpha}^{0}\}_{\alpha\in V\cup\bar{E}}$
by \eqref{eq:z-plus-minimizer}. 

Now, define $\mathbf{z}_{\alpha}^{++}\in[\mathbb{R}^{m}]^{|V|}$ so
that $\mathbf{z}_{i}^{++}=\mathbf{z}_{i}^{+}$ for all $i\in V$,
and for all $e\in\bar{E}$, $\mathbf{z}_{e}^{++}$ are defined so
that $\mathbf{v}_{H}^{++}\overset{\eqref{eq:v-H-def}}{=}\sum_{\alpha\in\bar{E}}\mathbf{z}_{\alpha}^{++}$
also satisfies 
\begin{equation}
\begin{array}{c}
\mathbf{v}_{H}^{++}=\underset{\mathbf{v}_{H}\in\mathbf{X}}{\arg\min}\frac{1}{2}\Big\|\underset{i\in V}{\sum}\mathbf{z}_{i}^{+}+\mathbf{v}_{H}-\bar{\mathbf{x}}\Big\|^{2}.\end{array}\label{eq:v-H-plus-plus}
\end{equation}
Note that $\mathbf{z}^{+}$ and $\mathbf{z}^{++}$ are defined by
a block coordinate minimization of the function $F_{S}(\cdot)$ starting
from the dual iterate $\mathbf{z}^{0}$. Consider $F_{V}:\mathbf{X}^{2}\to\mathbb{R}$
defined by 
\begin{equation}
\begin{array}{c}
F_{V}(\mathbf{z},\mathbf{v}_{H})=\underset{i\in V}{\overset{\phantom{i\in V}}{\sum}}\mathbf{f}_{i}^{*}([\mathbf{z}]_{i})+\delta_{D}^{*}(\mathbf{v}_{H})+\frac{1}{2}\|\mathbf{z}+\mathbf{v}_{H}-\bar{\mathbf{x}}\|^{2}.\end{array}\label{eq:F-V-form}
\end{equation}
Note that 
\begin{equation}
\begin{array}{c}
F_{S}(\mathbf{z})=F_{V}\Big(\underset{i\in V}{\overset{\phantom{i\in V}}{\sum}}\mathbf{z}_{i},\underset{e\in\bar{E}}{\overset{\phantom{e\in\bar{E}}}{\sum}}\mathbf{z}_{e}\Big).\end{array}\label{eq:F-S-F-V}
\end{equation}
The iterates $\mathbf{z}^{+}$ and $\mathbf{z}^{++}$ mentioned earlier
can be checked to be related to iterates produced by alternating minimization
on $F_{V}(\cdot)$. More precisely,\begin{subequations}\label{eq_m:alt_min}
\begin{eqnarray}
\begin{array}{c}
\underset{i\in V}{\overset{\phantom{i\in V}}{\sum}}\mathbf{z}_{i}^{+}\end{array} & = & \begin{array}{c}
\underset{\mathbf{z}\in\mathbf{X}}{\arg\min}\,\,F_{V}\Big(\mathbf{z},\underset{e\in\bar{E}}{\overset{\phantom{i\in V}}{\sum}}\mathbf{z}_{e}^{0}\Big)\end{array}\label{eq:first-term-argmin}\\
\begin{array}{c}
\mathbf{v}_{H}^{++}\end{array} & = & \begin{array}{c}
\underset{\mathbf{v}_{H}\in\mathbf{X}}{\arg\min}\,\,F_{V}\Big(\underset{i\in V}{\overset{\phantom{i\in V}}{\sum}}\mathbf{z}_{i}^{+},\mathbf{v}_{H}\Big).\end{array}\label{eq:v-H-argmin}
\end{eqnarray}
 \end{subequations} We first show the following:
\begin{lem}
Suppose $\mathbf{z}^{+}$ and $\mathbf{z}^{++}$ are defined via $\mathbf{z}^{0}$
being set to be $\mathbf{z}^{n,0}$. Then there is some constant $c_{1}>0$
such that 
\begin{equation}
\tilde{F}_{S}^{n,0}(\mathbf{z}^{n,0})-\tilde{F}_{S}^{n,\bar{w}}(\mathbf{z}^{n,\bar{w}})\geq c_{1}[\tilde{F}_{S}^{n,0}(\mathbf{z}^{n,0})-F_{S}(\mathbf{z}^{++})].\label{eq:decrease-satisfied}
\end{equation}
\end{lem}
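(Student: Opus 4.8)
The plan is to split the target gap $\tilde{F}_{S}^{n,0}(\mathbf{z}^{n,0})-F_{S}(\mathbf{z}^{++})$ into a node part and an edge part and to show that the one-cycle decrease on the left of \eqref{eq:decrease-satisfied} dominates each part separately. Writing $A:=\tilde{F}_{S}^{n,0}(\mathbf{z}^{n,0})-F_{S}(\mathbf{z}^{+})$ and $B:=F_{S}(\mathbf{z}^{+})-F_{S}(\mathbf{z}^{++})$, we have $\tilde{F}_{S}^{n,0}(\mathbf{z}^{n,0})-F_{S}(\mathbf{z}^{++})=A+B$, with $A,B\geq0$ because $\mathbf{z}^{+},\mathbf{z}^{++}$ are successive block minimizers of $F_{S}$ (equivalently alternating minimizers of $F_{V}$ via \eqref{eq:first-term-argmin} and \eqref{eq:v-H-argmin}) and $\tilde{F}_{S}^{n,0}\geq F_{S}$ pointwise since $\mathbf{f}_{i,n,0}^{*}\geq\mathbf{f}_{i}^{*}$. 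A short computation using \eqref{eq:v-H-plus-plus} and $\mathbf{v}_{H}^{0}\in D^{\perp}$ identifies the edge part as $B=\tfrac{1}{2}d(\mathbf{x}^{+},D)^{2}$, where $\mathbf{x}^{+}:=\bar{\mathbf{x}}-\sum_{i\in V}\mathbf{z}_{i}^{+}-\mathbf{v}_{H}^{0}$ is the primal estimate after the ideal node minimization and $D$ is the diagonal \eqref{eq:D-formula-a}. Once the two bounds $\tilde{F}_{S}^{n,0}(\mathbf{z}^{n,0})-\tilde{F}_{S}^{n,\bar{w}}(\mathbf{z}^{n,\bar{w}})\geq c_{A}A$ and $\geq c_{B}B$ are in hand, the claim follows with $c_{1}=\tfrac{1}{2}\min\{c_{A},c_{B}\}$.

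For the node part I would reuse the argument behind \eqref{eq:gamma-2-ineq}. The first inner sweep updates the node variables; for $i\in V_{4}$ the smooth branch of Lemma~\ref{lem:subdiff-decrease}(2) supplies a constant $\gamma_{2}\in[0,1)$ with $\tilde{F}_{S}^{n,1}(\mathbf{z}^{n,1})-F_{S}(\mathbf{z}^{+})\leq\gamma_{2}[\tilde{F}_{S}^{n,0}(\mathbf{z}^{n,0})-F_{S}(\mathbf{z}^{+})]$, while for $i\in V_{1}\cup V_{2}\cup V_{3}$ the update is exact and contributes nothing to the residual gap. Rearranging gives $\tilde{F}_{S}^{n,0}(\mathbf{z}^{n,0})-\tilde{F}_{S}^{n,1}(\mathbf{z}^{n,1})\geq(1-\gamma_{2})A$, and the monotonicity of $\{\tilde{F}_{S}^{n,w}(\mathbf{z}^{n,w})\}_{w}$ coming from Theorem~\ref{thm:convergence}(i) promotes this to $\tilde{F}_{S}^{n,0}(\mathbf{z}^{n,0})-\tilde{F}_{S}^{n,\bar{w}}(\mathbf{z}^{n,\bar{w}})\geq(1-\gamma_{2})A$, so $c_{A}=1-\gamma_{2}$.

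For the edge part I would transplant the linear-regularity argument of \eqref{eq:lin-reg-arg}--\eqref{eq:linreg-squares}. Since $\bar{E}_{n}$ connects $V$, the Hoffman lemma gives $\kappa>0$ with $\kappa\,d(\mathbf{x},D)\leq\max_{\alpha\in\bar{E}_{n}}d(\mathbf{x},H_{\alpha})$; choosing the worst constraint $\alpha^{*}$ for $\mathbf{x}^{n,1}$ and the inner index $w_{\alpha^{*}}$ at which it is enforced (so $\mathbf{x}^{n,w_{\alpha^{*}}}\in H_{\alpha^{*}}$), a Cauchy--Schwarz estimate on the telescoped primal increments together with the per-sweep decrease in Theorem~\ref{thm:convergence}(i) yields $\tilde{F}_{S}^{n,1}(\mathbf{z}^{n,1})-\tilde{F}_{S}^{n,\bar{w}}(\mathbf{z}^{n,\bar{w}})\geq\tfrac{\kappa^{2}}{2\bar{w}}\,d(\mathbf{x}^{n,1},D)^{2}$. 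It then remains to replace $\mathbf{x}^{n,1}$ by the ideal $\mathbf{x}^{+}$: using $d(\mathbf{x}^{+},D)^{2}\leq2d(\mathbf{x}^{n,1},D)^{2}+2\|\mathbf{x}^{+}-\mathbf{x}^{n,1}\|^{2}$ and bounding $\|\mathbf{x}^{+}-\mathbf{x}^{n,1}\|^{2}=\|\sum_{i\in V}(\mathbf{z}_{i}^{n,1}-\mathbf{z}_{i}^{+})\|^{2}$ by the node-subproblem suboptimality, hence by a multiple of $A$ via the strong convexity of the $f_{i}^{*}$ from Assumption~\ref{assu:lin-conv-assu}(3), closes the estimate and produces $\tilde{F}_{S}^{n,0}(\mathbf{z}^{n,0})-\tilde{F}_{S}^{n,\bar{w}}(\mathbf{z}^{n,\bar{w}})\geq c_{B}B$ (the extra $A$ term being absorbed by the already-established node bound).

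The main obstacle is precisely this last bridging step. The idealized quantities $\mathbf{z}^{+},\mathbf{z}^{++}$, and hence $\mathbf{x}^{+}$, are \emph{exact} block minimizers of the true $F_{S}$, whereas the algorithm produces only the bundle-approximate node iterate $\mathbf{z}^{n,1}$ and a sequence of \emph{partial} edge updates spread over $w=1,\dots,\bar{w}$ rather than a single projection onto $D^{\perp}$. Controlling the discrepancy $\|\mathbf{x}^{+}-\mathbf{x}^{n,1}\|$ by $A$ (so that the error folds into the node bound) and correctly matching the algorithm's incremental edge steps to the single idealized projection through the linear-regularity constant $\kappa$ is where the strong convexity of the $f_{i}^{*}$ and the smooth case of Lemma~\ref{lem:subdiff-decrease} are indispensable; I expect this to be the most delicate part of the argument.
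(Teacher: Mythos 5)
Your argument is correct, and it is assembled from exactly the same ingredients as the paper's proof: the bundle contraction of Lemma \ref{lem:subdiff-decrease}(2) for the node sweep (the paper's \eqref{eq:gamma-2-ineq}), the identification $B=F_{S}(\mathbf{z}^{+})-F_{S}(\mathbf{z}^{++})=\tfrac{1}{2}d(\mathbf{x}^{+},D)^{2}$ (the paper's \eqref{eq:D-square-2}), a strong-convexity bound controlling $\|\sum_{i\in V}(\mathbf{z}_{i}^{n,1}-\mathbf{z}_{i}^{+})\|$ by $\sqrt{A}$ (the paper's \eqref{eq:D-square-1}, which needs only the $1$-strong convexity of the quadratic term in $\sum_{i}\mathbf{z}_{i}$, so the detour through Assumption \ref{assu:lin-conv-assu}(3) and the factor $|V|$ from summing the blockwise bounds is avoidable), and the Hoffman/linear-regularity estimate combined with the telescoped per-sweep decrease of Theorem \ref{thm:convergence}(i). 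The difference is purely in how these are combined. The paper runs a dichotomy on whether $A\geq c_{3}(A+B)$: in the first case the node contraction alone wins, and in the second the smallness of $A$ feeds into the \emph{reverse} triangle inequality $d(\mathbf{x}^{n,0},D)\geq d(\mathbf{x}^{+},D)-\|\sum_{\alpha}(\mathbf{z}_{\alpha}^{+}-\mathbf{z}_{\alpha}^{n,0})\|$ in \eqref{eq:dist-comp-to-sqrt}, with the constraint $c_{3}<\tfrac{1}{2}$ needed to keep $\sqrt{1-c_{3}}-\sqrt{c_{3}}$ positive. You instead prove the two bounds unconditionally and use the forward inequality $d(\mathbf{x}^{+},D)^{2}\leq 2d(\mathbf{x}^{n,1},D)^{2}+2\|\mathbf{x}^{+}-\mathbf{x}^{n,1}\|^{2}$, absorbing the error term into the already-established node bound; this eliminates the case split at the cost of slightly worse constants. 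The bridging step you flag as delicate does close exactly as you describe. The one hypothesis to make explicit (which the paper also leaves implicit) is that the first sweep updates every node block with prox center $[\bar{\mathbf{x}}-\mathbf{v}_{H}^{n,0}]_{i}$, so that by the sparsity in Proposition \ref{prop:sparsity} the sweep's decoupled subproblems match the Jacobi-type definition \eqref{eq:z-plus-minimizer} of $\mathbf{z}^{+}$; without that, neither your $c_{A}=1-\gamma_{2}$ nor the paper's $c_{2}$ is justified.
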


\begin{proof}
Let $c_{3}<\frac{1}{2}$ be any constant. We divide into two cases.

\textbf{Case 1:} $\tilde{F}_{S}^{n,0}(\mathbf{z}^{n,0})-F_{S}(\mathbf{z}^{+})\geq c_{3}[\tilde{F}_{S}^{n,0}(\mathbf{z}^{n,0})-F_{S}(\mathbf{z}^{++})]$. 

By Lemma \ref{lem:subdiff-decrease}(2) and the definition of $\mathbf{z}^{+}$,
there is a constant $c_{2}>0$ such that $\tilde{F}_{S}^{n,0}(\mathbf{z}^{n,0})-\tilde{F}_{S}^{n,1}(\mathbf{z}^{n,1})\geq c_{2}[\tilde{F}_{S}^{n,0}(\mathbf{z}^{n,0})-F_{S}(\mathbf{z}^{+})]$.
We have 
\begin{eqnarray*}
 &  & \tilde{F}_{S}^{n,0}(\mathbf{z}^{n,0})-\tilde{F}_{S}^{n,\bar{w}}(\mathbf{z}^{n,\bar{w}})\overset{\scriptsize{\mbox{Thm \ref{thm:convergence}(i)}}}{\geq}\tilde{F}_{S}^{n,0}(\mathbf{z}^{n,0})-\tilde{F}_{S}^{n,1}(\mathbf{z}^{n,1})\\
 & \overset{\scriptsize{\mbox{Lem \ref{lem:subdiff-decrease}(2)}}}{\geq} & c_{2}[\tilde{F}_{S}^{n,0}(\mathbf{z}^{n,0})-F_{S}(\mathbf{z}^{+})]\overset{\scriptsize{\mbox{Case 1}}}{\geq}c_{2}c_{3}[\tilde{F}_{S}^{n,0}(\mathbf{z}^{n,0})-F_{S}(\mathbf{z}^{++})].
\end{eqnarray*}
\textbf{Case 2:} $\tilde{F}_{S}^{n,0}(\mathbf{z}^{n,0})-F_{S}(\mathbf{z}^{+})\leq c_{3}[\tilde{F}_{S}^{n,0}(\mathbf{z}^{n,0})-F_{S}(\mathbf{z}^{++})]$. 

From the inequality in \ref{thm:convergence}(i) and the definition
of $\mathbf{z}^{+}$, we have 
\begin{eqnarray}
 &  & \begin{array}{c}
c_{3}[\tilde{F}_{S}^{n,0}(\mathbf{z}^{n,0})-F_{S}(\mathbf{z}^{++})]\overset{\scriptsize{\mbox{Case 2}}}{\geq}\tilde{F}_{S}^{n,0}(\mathbf{z}^{n,0})-F_{S}(\mathbf{z}^{+})\phantom{\Big\|}\end{array}\label{eq:D-square-1}\\
 & \geq & \begin{array}{c}
F_{S}(\mathbf{z}^{n,0})-F_{S}(\mathbf{z}^{+})\overset{\eqref{eq:F-S-F-V}}{=}F_{V}\Big(\underset{i\in V}{\overset{\phantom{i\in V}}{\sum}}\mathbf{z}_{i}^{0},\underset{e\in\bar{E}}{\overset{\phantom{e\in\bar{E}}}{\sum}}\mathbf{z}_{e}^{0}\Big)-F_{V}\Big(\underset{i\in V}{\overset{\phantom{i\in V}}{\sum}}\mathbf{z}_{i}^{+},\underset{e\in\bar{E}}{\overset{\phantom{e\in\bar{E}}}{\sum}}\mathbf{z}_{e}^{0}\Big)\end{array}\nonumber \\
 & \overset{\eqref{eq:F-V-form},\eqref{eq:first-term-argmin}}{\geq} & \begin{array}{c}
\frac{1}{2}\Big\|\underset{\alpha\in V\cup\bar{E}}{\overset{\phantom{\alpha\in V\cup E}}{\sum}}\mathbf{z}_{\alpha}^{n,0}-\underset{\alpha\in V\cup\bar{E}}{\sum}\mathbf{z}_{\alpha}^{+}\Big\|^{2}.\end{array}\nonumber 
\end{eqnarray}
Since $\mathbf{z}^{++}$ was defined so that $\bar{\mathbf{x}}-\sum_{\alpha\in V\cup\bar{E}}\mathbf{z}_{\alpha}^{++}$
is the projection of $\bar{\mathbf{x}}-\sum_{\alpha\in V\cup\bar{E}}\mathbf{z}_{\alpha}^{+}$
onto $D$ as defined in \eqref{eq:D-formula-a}, we have 
\begin{eqnarray}
\begin{array}{c}
\frac{1}{2}d\Big(\bar{\mathbf{x}}-\underset{\alpha\in V\cup E}{\sum}\mathbf{z}_{\alpha}^{+},D\Big)^{2}\end{array} & = & \begin{array}{c}
F_{S}(\mathbf{z}^{+})-F_{S}(\mathbf{z}^{++})\end{array}\label{eq:D-square-2}\\
 & = & \begin{array}{c}
\tilde{F}_{S}^{n,0}(\mathbf{z}^{n,0})-F_{S}(\mathbf{z}^{++})-[\tilde{F}_{S}^{n,0}(\mathbf{z}^{n,0})-F_{S}(\mathbf{z}^{+})]\end{array}\nonumber \\
 & \overset{\scriptsize{\mbox{Case 2}}}{\geq} & \begin{array}{c}
(1-c_{3})[\tilde{F}_{S}^{n,0}(\mathbf{z}^{n,0})-F_{S}(\mathbf{z}^{++})].\end{array}\nonumber 
\end{eqnarray}
So 
\begin{eqnarray}
 &  & \begin{array}{c}
d\Big(\bar{\mathbf{x}}-\underset{\alpha\in V\cup\bar{E}}{\overset{\phantom{\alpha\in V\cup\bar{E}}}{\sum}}\mathbf{z}_{\alpha}^{n,0},D\Big)\end{array}\label{eq:dist-comp-to-sqrt}\\
 & \geq & \begin{array}{c}
d\Big(\bar{\mathbf{x}}-\underset{\alpha\in V\cup\bar{E}}{\overset{\phantom{\alpha\in V\cup\bar{E}}}{\sum}}\mathbf{z}_{\alpha}^{+},D\Big)-\Big\|\underset{\alpha\in V\cup\bar{E}}{\sum}\mathbf{z}_{\alpha}^{+}-\underset{\alpha\in V\cup\bar{E}}{\sum}\mathbf{z}_{\alpha}^{n,0}\Big\|\end{array}\nonumber \\
 & \overset{\eqref{eq:D-square-1},\eqref{eq:D-square-2}}{\geq} & \begin{array}{c}
\left[\sqrt{1-c_{3}}-\sqrt{c_{3}}\right]\sqrt{2[\tilde{F}_{S}^{n,0}(\mathbf{z}^{n,0})-F_{S}(\mathbf{z}^{++})]}.\end{array}\nonumber 
\end{eqnarray}
Since $c_{3}<\frac{1}{2}$, $\sqrt{1-c_{3}}-\sqrt{c_{3}}>0$. By linear
regularity of $D=\cap_{\alpha\in\bar{E}}H_{\alpha}$ as in Definition
\ref{def:connects} (since $H_{\alpha}$ are all linear subspaces),
there is some $\kappa>0$ such that $d\left(\bar{\mathbf{x}}-\sum_{\alpha\in V\cup\bar{E}}\mathbf{z}_{\alpha}^{n,0},D\right)\leq\kappa\max_{\alpha\in\bar{E}_{n}}d(\bar{\mathbf{x}}-\sum_{\alpha\in V\cup\bar{E}_{n}}\mathbf{z}_{\alpha}^{n,0},H_{\alpha})$.
Let the right hand side be attained by some $\alpha^{*}\in\bar{E}_{n}$.
There is some $w^{*}\in\{1,\dots,\bar{w}\}$ such that $\alpha^{*}\in S_{n,w^{*}}$.
Now, since $\bar{\mathbf{x}}-\sum_{\alpha\in V\cup E}\mathbf{z}_{\alpha}^{n,w^{*}}\in H_{\alpha^{*}}$,
we have 
\begin{eqnarray*}
 &  & \begin{array}{c}
\tilde{F}_{S}^{n,0}(\mathbf{z}^{n,0})-\tilde{F}_{S}^{n,\bar{w}}(\mathbf{z}^{n,\bar{w}})\end{array}\\
 & \overset{\scriptsize{\mbox{Thm \ref{thm:convergence}(i)}}}{\geq} & \begin{array}{c}
\tilde{F}_{S}^{n,0}(\mathbf{z}^{n,0})-\tilde{F}_{S}^{n,w^{*}}(\mathbf{z}^{n,w^{*}})\end{array}\\
 & \overset{\scriptsize{\mbox{Thm \ref{thm:convergence}(i)}}}{\geq} & \begin{array}{c}
\underset{w=1}{\overset{w^{*}}{\sum}}\frac{1}{2}\Big\|\underset{\alpha\in V\cup\bar{E}}{\overset{\phantom{\alpha\in V\cup\bar{E}}}{\sum}}\mathbf{z}^{n,w}-\underset{\alpha\in V\cup\bar{E}}{\overset{\phantom{\alpha\in V\cup\bar{E}}}{\sum}}\mathbf{z}^{n,w-1}\Big\|^{2}\end{array}\\
 & \geq & \begin{array}{c}
\frac{1}{w^{*}}\frac{1}{2}\Big\|\underset{\alpha\in V\cup\bar{E}}{\overset{\phantom{\alpha\in V\cup\bar{E}}}{\sum}}\mathbf{z}^{n,w^{*}}-\underset{\alpha\in V\cup\bar{E}}{\overset{\phantom{\alpha\in V\cup\bar{E}}}{\sum}}\mathbf{z}^{n,0}\Big\|^{2}\end{array}\\
 & \overset{\bar{\mathbf{x}}-\sum_{\alpha\in V\cup\bar{E}}\mathbf{z}_{\alpha}^{n,w^{*}}\in H_{\alpha^{*}}}{\geq} & \begin{array}{c}
\frac{1}{2\bar{w}}d\Big(\bar{\mathbf{x}}-\underset{\alpha\in V\cup\bar{E}}{\overset{\phantom{\alpha\in V\cup\bar{E}}}{\sum}}\mathbf{z}^{n,0},D\Big)^{2}\end{array}\\
 & \overset{\scriptsize{\eqref{eq:dist-comp-to-sqrt}}}{\geq} & \begin{array}{c}
\frac{1}{\bar{w}}\left[\sqrt{1-c_{3}}-\sqrt{c_{3}}\right]^{2}[\tilde{F}_{S}^{n,0}(\mathbf{z}^{n,0})-F_{S}(\mathbf{z}^{++})].\end{array}
\end{eqnarray*}
This concludes the proof. 
\end{proof}
\begin{thm}
Let $h^{n}=\tilde{F}_{S}^{n,0}(\mathbf{z}^{n,0})-F_{S}(\mathbf{z}^{*})$.
The values $\{h^{n}\}_{n=1}^{\infty}$ converge to zero at the rate
of $O(1/n)$. 
\end{thm}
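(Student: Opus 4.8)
The plan is to convert the single--cycle estimate \eqref{eq:decrease-satisfied} into a recursion of the form $h^{n+1}\le h^{n}-\beta(h^{n})^{2}$ and then apply the elementary fact that such a recursion, together with $h^{n}\ge 0$, forces $h^{n}=O(1/n)$: from $h^{n}-h^{n+1}\ge\beta(h^{n})^{2}\ge\beta h^{n}h^{n+1}$ (using $h^{n+1}\le h^{n}$) one gets $1/h^{n+1}\ge 1/h^{n}+\beta$, and summing gives $1/h^{n}\ge 1/h^{1}+\beta(n-1)$. First I would record the per--iteration decrease. Lines 16--17 of Algorithm \ref{alg:Ext-Dyk} carry $\{\mathbf{z}_i^{n,\bar w}\}$ and $\mathbf{v}_H^{n,\bar w}$ into iteration $n+1$, while the reset in line 5 only redistributes $\{\mathbf{z}_\alpha\}_{\alpha\in\bar E}$ subject to $\sum_{\alpha\in\bar E}\mathbf{z}_\alpha=\mathbf{v}_H$ and $\mathbf{z}_\alpha\in H_\alpha^\perp$, which leaves $\tilde F_S$ unchanged; hence $\tilde F_S^{n+1,0}(\mathbf{z}^{n+1,0})=\tilde F_S^{n,\bar w}(\mathbf{z}^{n,\bar w})$. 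Subtracting $F_S(\mathbf{z}^{*})$ from \eqref{eq:decrease-satisfied} then gives
\[
h^{n+1}\le h^{n}-c_{1}\big[\tilde F_S^{n,0}(\mathbf{z}^{n,0})-F_S(\mathbf{z}^{++})\big].
\]
Because the two brackets share the term $\tilde F_S^{n,0}(\mathbf{z}^{n,0})$, everything reduces to bounding the \emph{predicted} decrease $\tilde F_S^{n,0}(\mathbf{z}^{n,0})-F_S(\mathbf{z}^{++})$ below by a constant multiple of $(h^{n})^{2}$.

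For that bound I would split
\[
\tilde F_S^{n,0}(\mathbf{z}^{n,0})-F_S(\mathbf{z}^{++})=a_n+\big[F_S(\mathbf{z}^{n,0})-F_S(\mathbf{z}^{++})\big],
\]
where $a_n:=\tilde F_S^{n,0}(\mathbf{z}^{n,0})-F_S(\mathbf{z}^{n,0})\ge 0$ is the bundle approximation gap (nonnegative since $\mathbf f_{i,n,0}^{*}\ge\mathbf f_i^{*}$). The pair $\mathbf{z}^{+},\mathbf{z}^{++}$ is exactly one sweep of two--block alternating minimization of $F_V$ by \eqref{eq:F-S-F-V} and \eqref{eq_m:alt_min}, and $F_V$ is a convex function whose only smooth part is the Lipschitz quadratic $\tfrac12\|\mathbf{z}+\mathbf{v}_H-\bar{\mathbf{x}}\|^2$. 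Invoking the standard $O(1/k)$ sufficient--decrease estimate for exact block coordinate minimization \cite{Beck_Tetruashvili_2013,Beck_alt_min_SIOPT_2015}, together with a uniform bound $R$ on $\|\cdot-\mathbf{z}^{*}\|$ over the (bounded) iterates furnished by Theorem \ref{thm:convergence}(ii),(iii) and Theorem \ref{thm:CQ-gives-bdd-iters}, yields $F_S(\mathbf{z}^{n,0})-F_S(\mathbf{z}^{++})\ge c\,g_n^{2}$ with $g_n:=F_S(\mathbf{z}^{n,0})-F_S(\mathbf{z}^{*})$ and $c=1/(2LR^{2})$ for the effective Lipschitz constant $L$. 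Since $h^{n}=a_n+g_n$ is nonincreasing and hence bounded by $h^{1}$, the elementary inequality $(a_n+g_n)^2\le 2a_n^2+2g_n^2\le(2h^{1}+2/c)(a_n+c\,g_n^2)$, valid because $a_n\le h^{1}$, gives $a_n+c\,g_n^2\ge c_2(h^{n})^{2}$ with $c_2=1/(2h^{1}+2/c)$. Hence $\tilde F_S^{n,0}(\mathbf{z}^{n,0})-F_S(\mathbf{z}^{++})\ge c_2(h^{n})^2$, and combining with the first paragraph yields $h^{n+1}\le h^{n}-c_1c_2(h^{n})^2$, from which the sequence lemma delivers $h^{n}=O(1/n)$.

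The main obstacle is the middle step: producing the \emph{uniform} quadratic sufficient--decrease constant $c$ for one alternating--minimization sweep of $F_V$, which is where the boundedness of the dual iterates is indispensable and where one must adapt the block coordinate minimization theory to the present composite function. A second, more subtle point is that the bundle approximation gap $a_n$ must be retained throughout: a naive attempt to bound the decrease below by $c\,g_n^2$ alone fails, since $g_n\le h^{n}$ points the wrong way for a lower bound in terms of $h^n$; it is precisely the combination $a_n+c\,g_n^2$, bounded below using the boundedness of $\{h^{n}\}$, that recovers the $(h^{n})^2$ term needed for the $O(1/n)$ rate. I would also verify at the outset that the hypotheses of this section (Assumption \ref{assu:O1-k-assu}, existence of $\mathbf{z}^{*}$, and the constraint qualification of Theorem \ref{thm:CQ-gives-bdd-iters}) indeed guarantee the bounded region supplying $R$.
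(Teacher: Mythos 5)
Your proposal follows essentially the same route as the paper: both rest on \eqref{eq:decrease-satisfied}, on viewing $\mathbf{z}^{+},\mathbf{z}^{++}$ as one sweep of two-block alternating minimization of $F_{V}(\cdot)$, on the subgradient $(\mathbf{v}_{H}^{++}-\mathbf{v}_{H}^{0},0)\in\partial F_{V}(\sum_{i}\mathbf{z}_{i}^{+},\mathbf{v}_{H}^{++})$ together with boundedness of the dual iterates to get the one-sweep sufficient-decrease estimate (the paper derives this by hand in \eqref{eq:F-V-chain}--\eqref{eq:recur-up} rather than citing \cite{Beck_Tetruashvili_2013,Beck_alt_min_SIOPT_2015}, but the content is identical), and on an elementary recursion for $1/h^{n}$. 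Your identity $\tilde{F}_{S}^{n+1,0}(\mathbf{z}^{n+1,0})=\tilde{F}_{S}^{n,\bar{w}}(\mathbf{z}^{n,\bar{w}})$ via the sum-preserving reset in line 5 is correct and is used implicitly by the paper.

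The one step you state too strongly is the middle inequality $F_{S}(\mathbf{z}^{n,0})-F_{S}(\mathbf{z}^{++})\ge c\,g_{n}^{2}$ with $g_{n}$ the gap \emph{before} the sweep. What the subgradient argument actually delivers (this is \eqref{eq:recur} in the paper) is
\[
d:=F_{S}(\mathbf{z}^{n,0})-F_{S}(\mathbf{z}^{++})\;\ge\;\tfrac{1}{2C^{2}}\bigl[F_{S}(\mathbf{z}^{++})-F_{S}(\mathbf{z}^{*})\bigr]^{2}=\tfrac{1}{2C^{2}}(g_{n}-d)^{2},
\]
i.e.\ the square of the gap \emph{after} the sweep, and the cited block-coordinate lemmas are likewise of the form $a_{k}-a_{k+1}\ge\gamma a_{k+1}^{2}$. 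Passing from $d\ge\tfrac{1}{2C^{2}}(g_{n}-d)^{2}$ to $d\ge c'g_{n}^{2}$ is true but requires an extra case split using the boundedness of $g_{n}$ (if $d\ge g_{n}/2$ use $d\ge g_{n}^{2}/(2\bar{g})$; otherwise $g_{n}-d\ge g_{n}/2$ and the quadratic bound gives $d\ge g_{n}^{2}/(8C^{2})$), and the resulting constant is not the $1/(2LR^{2})$ you wrote. This is exactly the dichotomy the paper performs explicitly in \eqref{eq:recip-1-line} and the line after it, where $\max\{\tfrac{c_{1}}{2C^{2}}(h^{++}/h^{n})^{2},\tfrac{c_{1}}{h^{1}}(1-h^{++}/h^{n})\}$ is bounded below by a positive constant. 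With that repair inserted, your remaining manipulations --- the split $h^{n}=a_{n}+g_{n}$ with the bundle gap $a_{n}\ge 0$, the bound $a_{n}+c\,g_{n}^{2}\ge c_{2}(h^{n})^{2}$ via $a_{n}\le h^{1}$, and the telescoping of $1/h^{n}$ --- are all valid, so the proposal is correct up to this fixable over-claim.
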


\begin{proof}
From the form of $F_{V}(\cdot)$ in \eqref{eq:F-V-form}, we have\begin{subequations}
\begin{eqnarray*}
0 & \overset{\eqref{eq:first-term-argmin}}{\in} & \begin{array}{c}
\partial\Big[\underset{i\in V}{\overset{\phantom{i\in V}}{\sum}}\mathbf{f}_{i}(\cdot)\Big]\Big(\underset{i\in V}{\overset{\phantom{i\in V}}{\sum}}\mathbf{z}_{i}^{+}\Big)+\underset{i\in V}{\overset{\phantom{i\in V}}{\sum}}\mathbf{z}_{i}^{+}+\mathbf{v}_{H}^{0}-\bar{\mathbf{x}}\end{array}\\
0 & \overset{\eqref{eq:v-H-argmin}}{\in} & \begin{array}{c}
\partial\delta_{D}^{*}(\mathbf{v}_{H}^{++})+\underset{i\in V}{\overset{\phantom{i\in V}}{\sum}}\mathbf{z}_{i}^{+}+\mathbf{v}_{H}^{++}-\bar{\mathbf{x}}.\end{array}
\end{eqnarray*}
\end{subequations}We then have $(\mathbf{v}_{H}^{++}-\mathbf{v}_{H}^{0},0)\in\partial F_{V}(\sum_{i\in V}\mathbf{z}_{i}^{+},\mathbf{v}_{H}^{++})$.
Let an optimal solution of $F_{V}(\cdot)$ be $(\sum_{i\in V}\mathbf{z}^{*},\mathbf{v}_{H}^{*})$.
Then 
\begin{eqnarray}
\begin{array}{c}
F_{S}(\mathbf{z}^{++})-F_{S}(\mathbf{z}^{*})\end{array} & \overset{\eqref{eq:F-S-F-V}}{=} & \begin{array}{c}
F_{V}\Big(\underset{i\in V}{\overset{\phantom{i\in V}}{\sum}}\mathbf{z}_{i}^{+},\mathbf{v}_{H}^{++}\Big)-F_{V}\Big(\underset{i\in V}{\overset{\phantom{i\in V}}{\sum}}\mathbf{z}^{*},\mathbf{v}_{H}^{*}\Big)\end{array}\label{eq:F-V-chain}\\
 & \leq & \begin{array}{c}
-\Big\langle(\mathbf{v}_{H}^{++}-\mathbf{v}_{H}^{0},0),\Big(\underset{i\in V}{\overset{\phantom{i\in V}}{\sum}}\mathbf{z}^{*},\mathbf{v}_{H}^{*}\Big)-\Big(\underset{i\in V}{\overset{\phantom{i\in V}}{\sum}}\mathbf{z}_{i}^{+},\mathbf{v}_{H}^{++}\Big)\Big\rangle\end{array}\nonumber \\
 & \leq & \begin{array}{c}
\|\mathbf{v}_{H}^{++}-\mathbf{v}_{H}^{0}\|\Big\|\underset{i\in V}{\overset{\phantom{i\in V}}{\sum}}\mathbf{z}^{*}-\underset{i\in V}{\overset{\phantom{i\in V}}{\sum}}\mathbf{z}_{i}^{+}\Big\|.\end{array}\nonumber 
\end{eqnarray}
Since the $\mathbf{z}_{i}^{+}$ would be bounded if the constraint
qualification \eqref{eq:CQ} is satisfied, there is some $C>0$ such
that $\left\Vert \sum_{i\in V}\mathbf{z}^{*}-\sum_{i\in V}\mathbf{z}_{i}^{+}\right\Vert \leq C$.
We also have 
\begin{eqnarray}
 &  & \begin{array}{c}
\|\mathbf{v}_{H}^{++}-\mathbf{v}_{H}^{0}\|\end{array}\overset{\eqref{eq:v-H-argmin}}{\leq}\begin{array}{c}
\sqrt{2\left[F_{V}\left(\underset{i\in V}{\overset{\phantom{i\in V}}{\sum}}\mathbf{z}_{i}^{+},\mathbf{v}_{H}^{++}\right)-F_{V}\left(\underset{i\in V}{\overset{\phantom{i\in V}}{\sum}}\mathbf{z}_{i}^{+},\mathbf{v}_{H}^{0}\right)\right]}\end{array}\nonumber \\
 & = & \begin{array}{c}
\sqrt{2[F_{S}(\mathbf{z}^{++})-F_{S}(\mathbf{z}^{+})]}\end{array}\leq\begin{array}{c}
\sqrt{2[F_{S}(\mathbf{z}^{++})-F_{S}(\mathbf{z}^{0})]}.\end{array}\label{eq:F-V-chain-2}
\end{eqnarray}
Hence we have 
\begin{equation}
F_{S}(\mathbf{z}^{++})-F_{S}(\mathbf{z}^{*})\overset{\eqref{eq:F-V-chain}}{\leq}C\|\mathbf{v}_{H}^{++}-\mathbf{v}_{H}^{0}\|\overset{\eqref{eq:F-V-chain-2}}{\leq}\sqrt{2}C\sqrt{F_{S}(\mathbf{z}^{++})-F_{S}(\mathbf{z}^{0})},\label{eq:recur-up}
\end{equation}
so 
\begin{eqnarray}
\begin{array}{c}
\tilde{F}_{S}^{n,0}(\mathbf{z}^{n,0})-F_{S}(\mathbf{z}^{*})\end{array} & \geq & \begin{array}{c}
F_{S}(\mathbf{z}^{0})-F_{S}(\mathbf{z}^{*})\end{array}\label{eq:recur}\\
 & \overset{\eqref{eq:recur-up}}{\geq} & \begin{array}{c}
[F_{S}(\mathbf{z}^{++})-F_{S}(\mathbf{z}^{*})]+\frac{1}{2C^{2}}[F_{S}(\mathbf{z}^{++})-F_{S}(\mathbf{z}^{*})]^{2}.\end{array}\nonumber 
\end{eqnarray}
Let $h^{++}:=F_{S}(\mathbf{z}^{++})-F_{S}(\mathbf{z}^{*})$. We have
\begin{equation}
\begin{array}{c}
\frac{1}{h^{n+1}}-\frac{1}{h^{n}}=\frac{h^{n}-h^{n+1}}{h^{n+1}h^{n}}\geq\frac{h^{n}-h^{n+1}}{[h^{n}]^{2}}\overset{\eqref{eq:decrease-satisfied}}{\geq}\frac{c_{1}[h^{n}-h^{++}]}{[h^{n}]^{2}}\overset{\eqref{eq:recur}}{\geq}\frac{c_{1}}{2C^{2}}\frac{[h^{++}]^{2}}{[h^{n}]^{2}}.\end{array}\label{eq:recip-1-line}
\end{equation}
Also, 
\[
\begin{array}{c}
\frac{1}{h^{n+1}}-\frac{1}{h^{n}}\overset{\eqref{eq:recip-1-line}}{\geq}\frac{c_{1}[h^{n}-h^{++}]}{[h^{n}]^{2}}\geq\frac{c_{1}}{h^{n}}\left(1-\frac{h^{++}}{h^{n}}\right)\geq\frac{c_{1}}{h^{1}}\left(1-\frac{h^{++}}{h^{n}}\right).\end{array}
\]
We can check from simple calculus that there is a constant $c_{4}>0$
such that $\max\{\frac{c_{1}}{2C^{2}}\left(\frac{h^{++}}{h^{n}}\right)^{2},\frac{c_{1}}{h^{1}}\left(1-\frac{h^{++}}{h^{n}}\right)\}>c_{4}$,
which implies that $\frac{1}{h^{n+1}}-\frac{1}{h^{n}}>c_{4}$. This
implies the $O(1/n)$ convergence of $\{h^{n}\}_{n=1}^{\infty}$ as
needed. 
\end{proof}

\section{\label{sec:Sublin-conv}$O(1/k^{1/3})$ convergence when some functions
are subdifferentiable but not smooth}

In this section, we prove the $O(1/n^{1/3})$ convergence rate in
the general case. Here, the sets $V$, $\bar{E}$ and $\mathbf{X}$
need not take the form in Example \ref{exa:distrib-dyk}. 

 For $i\in V_{4}$, let $\hat{\mathbf{z}}_{i}^{n,p(n,i)}\in\mathbf{X}$
be the minimizer of 
\begin{equation}
\min_{\hat{\mathbf{z}}_{i}\in\mathbf{X}}\frac{1}{2}\bigg\|\hat{\mathbf{z}}_{i}-\bigg[\bar{\mathbf{x}}-\sum_{\beta\neq i}\mathbf{z}_{\beta}^{n,p(n,i)}\bigg]\bigg\|^{2}+\mathbf{f}_{i}^{*}(\hat{\mathbf{z}}_{i}).\label{eq:hat-dual}
\end{equation}
Note that by \eqref{eq:Dykstra-min-subpblm}, $\mathbf{z}_{i}^{n,p(n,i)}$
is the minimizer of a similar problem as \eqref{eq:hat-dual} but
with $\mathbf{f}_{i}^{*}(\cdot)$ replaced by $\mathbf{f}_{i,n,p(n,i)}^{*}(\cdot)$.
So $\hat{\mathbf{z}}_{i}^{n,p(n,i)}$ is distinct from $\mathbf{z}_{i}^{n,p(n,i)}$.
We also define $\hat{\mathbf{z}}_{i}^{n,\bar{w}}$ to be $\hat{\mathbf{z}}_{i}^{n,\bar{w}}=\hat{\mathbf{z}}_{i}^{n,p(n,i)}$.
Since $\mathbf{f}_{i}(\cdot)$ depends only on the $i$-th coordinate
of its input $\mathbf{x}$, one can check that $\hat{\mathbf{z}}_{i}\in\partial\mathbf{f}_{i}(\mathbf{x})$
for some $\mathbf{x}\in\mathbf{X}$, which implies that $[\hat{\mathbf{z}}_{i}]_{j}=0$
for all $j\neq i$. The problem \eqref{eq:hat-dual} is equivalent
to the problem of finding $\hat{z}_{i}^{n,p(n,i)}\in X_{i}$ that
minimizes 
\begin{equation}
\min_{\hat{z}_{i}\in X_{i}}\frac{1}{2}\bigg\|\hat{z}_{i}-\underbrace{\bigg([\bar{\mathbf{x}}]_{i}-\sum_{\beta\neq i}[\mathbf{z}_{\beta}^{n,p(n,i)}]_{i}\bigg)}_{p_{i}}\bigg\|^{2}+f_{i}^{*}(\hat{z}_{i}),\label{eq:small-sub-dual}
\end{equation}
where $\hat{\mathbf{z}}_{i}^{n,p(n,i)}$ and $\hat{z}_{i}^{n,p(n,i)}$
are related by the formula $[\mathbf{\hat{z}}_{i}^{n,p(n,i)}]_{j}=0$
if $j\neq i$, and $[\hat{\mathbf{z}}_{i}^{n,p(n,i)}]_{i}=\hat{z}_{i}^{n,p(n,i)}$.
Let $p_{i}$ be the prox center as marked in \eqref{eq:small-sub-dual}.
The dual of \eqref{eq:small-sub-dual} is, up to a constant and a
change of sign, 
\[
\begin{array}{c}
\underset{\hat{x}_{i}\in X_{i}}{\min}\frac{1}{2}\|\hat{x}_{i}-p_{i}\|^{2}+f_{i}(\hat{x}_{i}).\end{array}
\]
(A more accurate primal-dual pair is \eqref{eq:lemma-primal} and
\eqref{eq:lemma-dual}, but this form is equivalent up to a sign change
and constant.) By the Moreau decomposition theorem, the\textbf{ $\hat{x}_{i}^{n,p(n,i)}$,
}$\hat{z}_{i}^{n,p(n,i)}$, $[\mathbf{x}^{n,p(n,i)}]_{i}$ and $[\mathbf{z}_{i}^{n,p(n,i)}]_{i}$
satisfy 
\begin{equation}
\hat{x}_{i}^{n,p(n,i)}+\hat{z}_{i}^{n,p(n,i)}=p_{i}\mbox{ and }[\mathbf{x}^{n,p(n,i)}]_{i}+[\mathbf{z}_{i}^{n,p(n,i)}]_{i}=p_{i}.\label{eq:Moreau-conseq}
\end{equation}
From optimality conditions of \eqref{eq:hat-dual} and \eqref{eq:Dykstra-min-subpblm},
we have 
\begin{eqnarray}
 &  & \begin{array}{c}
0\overset{\eqref{eq:hat-dual}}{\in}\underset{\beta\neq i}{\overset{\phantom{\beta\neq i}}{\sum}}\mathbf{z}_{\beta}^{n,p(n,i)}+\hat{\mathbf{z}}_{i}^{n,p(n,i)}-\bar{\mathbf{x}}+\partial\mathbf{f}_{i}^{*}(\hat{\mathbf{z}}_{i}^{n,p(n,i)})\mbox{ for all }i\in V_{4},\mbox{ and }\end{array}\label{eq:subdiff-at-i-2nd}\\
 &  & \begin{array}{c}
0\overset{\eqref{eq:Dykstra-min-subpblm}}{\in}\underset{\beta\in V\cup\bar{E}}{\overset{\phantom{\beta\in V\cup\bar{E}}}{\sum}}\mathbf{z}_{\beta}^{n,p(n,\alpha)}-\bar{\mathbf{x}}+\partial\mathbf{f}_{\alpha,n,p(n,\alpha)}^{*}(\mathbf{z}_{\alpha}^{n,p(n,\alpha)})\mbox{ for all }\alpha\in V\cup\bar{E}.\end{array}\label{eq:aubdiff-at-i}
\end{eqnarray}
If $\alpha\in V_{1}\cup V_{2}\cup V_{3}\cup\bar{E}$, then $\hat{\mathbf{z}}_{i}^{n,p(n,i)}=\mathbf{z}_{i}^{n,p(n,i)}$.
Next, define $\Delta z_{i}\in X_{i}$ by 
\begin{equation}
\Delta z_{i}:=[\hat{\mathbf{z}}_{i}^{n,p(n,i)}]_{i}-[\mathbf{z}_{i}^{n,p(n,i)}]_{i}\overset{\eqref{eq:Moreau-conseq}}{=}[\mathbf{x}^{n,p(n,i)}]_{i}-\hat{x}_{i}^{n,p(n,i)}\in X_{i}.\label{eq:def-delta-z-i}
\end{equation}
We have 
\begin{eqnarray}
 &  & \begin{array}{c}
\langle[\mathbf{x}^{n,p(n,i)}]_{i},[\mathbf{z}_{i}^{n,p(n,i)}]_{i}\rangle-\langle\hat{x}_{i}^{n,p(n,i)},[\hat{\mathbf{z}}_{i}^{n,p(n,i)}]_{i}\rangle\end{array}\label{eq:hat-and-no-hat}\\
 & \overset{\eqref{eq:def-delta-z-i}}{=} & \begin{array}{c}
\langle[\mathbf{x}^{n,p(n,i)}]_{i}-[\mathbf{z}_{i}^{n,p(n,i)}]_{i},\Delta z_{i}\rangle+\frac{1}{2}\|\Delta z_{i}\|^{2}.\end{array}\nonumber 
\end{eqnarray}
We also have that $\hat{z}_{i}^{n,p(n,i)}\in\partial f_{i}(\hat{x}_{i}^{n,p(n,i)})$.
Recall \eqref{eq:h-def}. At the point $\hat{\mathbf{z}}^{n,\bar{w}}\in\mathbf{X}^{|V\cup\bar{E}|}$,
due to the separability of the non-quadratic term, we have, for each
$\alpha\in V\cup\bar{E}$, the partial subdifferential of $(-F)$
in the $\alpha$-th coordinate is 
\begin{eqnarray}
\partial(-F)_{\alpha}(\hat{\mathbf{z}}^{n,\bar{w}}) & \overset{\eqref{eq:h-def}}{=} & \sum_{\beta\in V\cup\bar{E}}\hat{\mathbf{z}}_{\beta}^{n,\bar{w}}-\bar{\mathbf{x}}+\partial\mathbf{f}_{\alpha}^{*}(\hat{\mathbf{z}}_{\alpha}^{n,p(n,\alpha)})\label{eq:partial-subdiff}\\
 & \overset{\eqref{eq:subdiff-at-i-2nd}}{\ni} & \underbrace{\sum_{\beta\neq\alpha}\hat{\mathbf{z}}_{\beta}^{n,\bar{w}}}_{\hat{\mathbf{t}}_{\alpha}}-\underbrace{\sum_{\beta\neq\alpha}\mathbf{z}_{\beta}^{n,p(n,\alpha)}}_{\mathbf{t}_{\alpha}}.\nonumber 
\end{eqnarray}
Let $\hat{\mathbf{t}}_{\alpha}$ and $\mathbf{t}_{\alpha}$ be as
marked above. Let $\mathbf{s}\in\mathbf{X}^{|V\cup\bar{E}|}$ be defined
by $\mathbf{s}_{\alpha}:=\hat{\mathbf{t}}_{\alpha}-\mathbf{t}_{\alpha}$.
In view of \eqref{eq:partial-subdiff}, we have $\mathbf{s}\in\partial(-F)(\hat{\mathbf{z}}^{n,\bar{w}})$.
Define $\bar{\mathbf{t}}_{\alpha}\in\mathbf{X}$ to be 
\begin{equation}
\begin{array}{c}
\bar{\mathbf{t}}_{\alpha}:=\underset{\beta\neq\alpha}{\overset{\phantom{\beta\neq\alpha}}{\sum}}\mathbf{z}_{\beta}^{n,\bar{w}}.\end{array}\label{eq:big-t-bar}
\end{equation}
Let $\mathbf{z}^{*}$ be an optimizer to \eqref{eq:general-dual},
which we assume to exist. Since $\mathbf{s}\in\partial(-F)(\hat{\mathbf{z}}^{n,\bar{w}})$,
we have 
\begin{eqnarray}
 &  & \begin{array}{c}
-F(\hat{\mathbf{z}}^{n,\bar{w}})-[-F(\mathbf{z}^{*})]\end{array}\label{eq:subdiff-for-h}\\
 & \overset{\mathbf{s}\in\partial(-F)(\hat{\mathbf{z}}^{n,\bar{w}})}{\leq} & \begin{array}{c}
-\langle\mathbf{s},\mathbf{z}^{*}-\hat{\mathbf{z}}^{n,\bar{w}}\rangle=\underset{\alpha\in V\cup\bar{E}}{\overset{\phantom{\alpha\in V\cup\bar{E}}}{\sum}}-\langle\hat{\mathbf{t}}_{\alpha}-\mathbf{t}_{\alpha},\mathbf{z}_{\alpha}^{*}-\hat{\mathbf{z}}_{\alpha}^{n,\bar{w}}\rangle\end{array}\nonumber \\
 & \leq & \begin{array}{c}
\underset{\alpha\in V\cup\bar{E}}{\overset{\phantom{\alpha\in V\cup\bar{E}}}{\sum}}\big[[\|\bar{\mathbf{t}}_{\alpha}-\hat{\mathbf{t}}_{\alpha}\|+\|\mathbf{t}_{\alpha}-\bar{\mathbf{t}}_{\alpha}\|]\|\mathbf{z}_{\alpha}^{*}-\hat{\mathbf{z}}_{\alpha}^{n,\bar{w}}\|\big].\end{array}\nonumber 
\end{eqnarray}
Since $\mathbf{x}^{n,w}$ equals $\bar{\mathbf{x}}-\mathbf{v}_{A}^{n,w}$
is bounded by Theorem \ref{thm:convergence}(ii), for all $i\in V_{4}$,
the subdifferentiable function $f_{i}(\cdot)$ is Lipschitz with some
constant $L_{i}$ in the domain of interest. Define $\hat{\mathbf{v}}_{A}^{n,w}\in\mathbf{X}$
like in \eqref{eq_m:all_acronyms} to be 
\begin{equation}
\begin{array}{c}
\hat{\mathbf{v}}_{A}^{n,w}:=\underset{\alpha\in V\cup\bar{E}}{\overset{\phantom{\alpha\in V\cup\bar{E}}}{\sum}}\hat{\mathbf{z}}_{i}^{n,w}.\end{array}\label{eq:v-hat}
\end{equation}
From the fact that $\hat{\mathbf{z}}_{i}^{n,w}=\mathbf{z}_{i}^{n,w}$
for all $i\in V_{1}\cup V_{2}\cup V_{3}$, we have 
\begin{eqnarray}
 &  & -[F^{n,\bar{w}}(\mathbf{z}^{n,\bar{w}})-F(\hat{\mathbf{z}}^{n,\bar{w}})]\label{eq:subdiff-part-2}\\
 & \overset{\eqref{eq:h-def},\eqref{eq:h-def-1}}{=} & \sum_{i\in V_{4}}[\mathbf{f}_{i,n,\bar{w}}^{*}(\mathbf{z}_{i}^{n,\bar{w}})-\mathbf{f}_{i}^{*}(\hat{\mathbf{z}}_{i}^{n,\bar{w}})]+\frac{1}{2}\|\bar{\mathbf{x}}-\sum_{\beta\in V\cup\bar{E}}\mathbf{z}_{\beta}^{n,\bar{w}}\|^{2}-\frac{1}{2}\|\bar{\mathbf{x}}-\sum_{\beta\in V\cup\bar{E}}\hat{\mathbf{z}}_{\beta}^{n,\bar{w}}\|^{2}\nonumber \\
 & \overset{\eqref{eq:v-hat},\eqref{eq_m:all_acronyms}}{=} & \sum_{i\in V_{4}}[f_{i,n,\bar{w}}^{*}([\mathbf{z}_{i}^{n,\bar{w}}]_{i})-f_{i}^{*}([\hat{\mathbf{z}}_{i}^{n,\bar{w}}]_{i})]+\frac{1}{2}\|\bar{\mathbf{x}}-\mathbf{v}_{A}^{n,\bar{w}}\|^{2}-\frac{1}{2}\|\bar{\mathbf{x}}-\hat{\mathbf{v}}_{A}^{n,\bar{w}}\|^{2}\nonumber \\
 & \overset{\eqref{eq:stagnant-indices},\eqref{eq:x-from-v-A}}{=} & \sum_{i\in V_{4}}\big[\langle[\mathbf{x}^{n,p(n,i)}]_{i},[\mathbf{z}_{i}^{n,p(n,i)}]_{i}\rangle-f_{i,n,p(n,i)}([\mathbf{x}^{n,p(n,i)}]_{i})-\langle\hat{x}_{i}^{n,p(n,i)},[\hat{\mathbf{z}}_{i}^{n,p(n,i)}]_{i}\rangle+f_{i}(\hat{x}_{i}^{n,p(n,i)})\big]\nonumber \\
 &  & -\langle(\bar{\mathbf{x}}-\mathbf{v}_{A}^{n,\bar{w}}),(\mathbf{v}_{A}^{n,\bar{w}}-\hat{\mathbf{v}}_{A}^{n,\bar{w}})\rangle-\frac{1}{2}\|\mathbf{v}_{A}^{n,\bar{w}}-\hat{\mathbf{v}}_{A}^{n,\bar{w}}\|^{2}\nonumber \\
 & \overset{\eqref{eq:hat-and-no-hat}}{=} & \sum_{i\in V_{4}}\big[\langle[\mathbf{x}^{n,p(n,i)}]_{i}-[\mathbf{z}_{i}^{n,p(n,i)}]_{i},\Delta z_{i}\rangle+\frac{1}{2}\|\Delta z_{i}\|^{2}+[f_{i}(\hat{x}_{i}^{n,p(n,i)})-f_{i,n,p(n,i)}([\mathbf{x}^{n,p(n,i)}]_{i})]\big]\nonumber \\
 &  & -\langle(\bar{\mathbf{x}}-\mathbf{v}_{A}^{n,\bar{w}}),(\mathbf{v}_{A}^{n,\bar{w}}-\hat{\mathbf{v}}_{A}^{n,\bar{w}})\rangle-\frac{1}{2}\|\sum_{i\in V_{4}}[\mathbf{z}_{i}^{n,\bar{w}}-\hat{\mathbf{z}}_{i}^{n,\bar{w}}]\|^{2}\nonumber \\
 & \overset{\eqref{eq:def-delta-z-i}}{\leq} & \sum_{i\in V_{4}}\big[\langle[\mathbf{x}^{n,p(n,i)}]_{i}-[\mathbf{z}_{i}^{n,p(n,i)}]_{i},\Delta z_{i}\rangle+[f_{i}([\mathbf{x}^{n,p(n,i)}]_{i})-f_{i,n,p(n,i)}([\mathbf{x}^{n,p(n,i)}]_{i})+L_{i}\|\Delta z_{i}\|]\big]\nonumber \\
 &  & +\|\bar{\mathbf{x}}-\mathbf{v}_{A}^{n,\bar{w}}\|\|\mathbf{v}_{A}^{n,\bar{w}}-\hat{\mathbf{v}}_{A}^{n,\bar{w}}\|.\nonumber 
\end{eqnarray}
Define $\Delta f_{i}\in\mathbb{R}$ to be 
\begin{equation}
\Delta f_{i}:=f_{i}([\mathbf{x}^{n,p(n,i)}]_{i})-f_{i,n,p(n,i)}([\mathbf{x}^{n,p(n,i)}]_{i}).\label{eq:def-delta-f-i}
\end{equation}
Since $f_{i,n,w}(\cdot)\leq f_{i}(\cdot)$, we have $\Delta f_{i}\geq0$. 
\begin{lem}
\label{lem:grad-sqrt-bdd}Recall the formulas of $\Delta z_{i}$ in
\eqref{eq:def-z-i} and $\Delta f_{i}$. For all $i\in V_{4}$, we
have 
\begin{equation}
\|\Delta z_{i}\|\leq\sqrt{\Delta f_{i}}.\label{eq:gradient-sqrt-bdd}
\end{equation}
\end{lem}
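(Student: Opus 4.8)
The plan is to recognize that $\hat{x}_i^{n,p(n,i)}$ and $[\mathbf{x}^{n,p(n,i)}]_i$ are the \emph{primal} minimizers of two proximal problems that share the same proximal center $p_i$ and differ only in that the first uses the true function $f_i$ while the second uses its affine minorant $f_{i,n,p(n,i)}$. By the Moreau decomposition \eqref{eq:Moreau-conseq}, the quantity $\Delta z_i$ defined in \eqref{eq:def-delta-z-i} is precisely the gap $[\mathbf{x}^{n,p(n,i)}]_i-\hat{x}_i^{n,p(n,i)}$ between these two primal minimizers, so it suffices to bound this gap by $\sqrt{\Delta f_i}$. The mechanism I would exploit is that both objectives are $1$-strongly convex, which upgrades each minimizer property into a quadratic growth estimate.

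Concretely, I would set $\phi(y):=\frac{1}{2}\|y-p_i\|^2+f_i(y)$, minimized at $\hat{x}_i^{n,p(n,i)}$, and $\psi(y):=\frac{1}{2}\|y-p_i\|^2+f_{i,n,p(n,i)}(y)$, minimized at $[\mathbf{x}^{n,p(n,i)}]_i$. Since the quadratic term makes each of $\phi,\psi$ strongly convex with modulus $1$, the minimizer property gives the two growth bounds
\[
\phi\big([\mathbf{x}^{n,p(n,i)}]_i\big)-\phi\big(\hat{x}_i^{n,p(n,i)}\big)\ge\tfrac{1}{2}\|\Delta z_i\|^2,\qquad \psi\big(\hat{x}_i^{n,p(n,i)}\big)-\psi\big([\mathbf{x}^{n,p(n,i)}]_i\big)\ge\tfrac{1}{2}\|\Delta z_i\|^2.
\]
Adding these two inequalities, the quadratic terms of $\phi-\psi$ cancel identically and only the function difference $f_i-f_{i,n,p(n,i)}$ survives, yielding
\[
\|\Delta z_i\|^2\le\big[f_i([\mathbf{x}^{n,p(n,i)}]_i)-f_{i,n,p(n,i)}([\mathbf{x}^{n,p(n,i)}]_i)\big]-\big[f_i(\hat{x}_i^{n,p(n,i)})-f_{i,n,p(n,i)}(\hat{x}_i^{n,p(n,i)})\big].
\]
The first bracket is exactly $\Delta f_i$ by \eqref{eq:def-delta-f-i}, while the second bracket is nonnegative because $f_{i,n,p(n,i)}(\cdot)\le f_i(\cdot)$ (cf.\ \eqref{eq:h-a-n-w-lesser}). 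Discarding the nonnegative second bracket gives $\|\Delta z_i\|^2\le\Delta f_i$, which is \eqref{eq:gradient-sqrt-bdd}.

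I do not expect a genuine obstacle here: the argument is essentially a two-line strong-convexity computation. The only points that require care are confirming that the two proximal problems share the common center $p_i$ (which is exactly what the pair of identities in \eqref{eq:Moreau-conseq} records) and invoking strong convexity with the correct modulus at each minimizer. The one conceptual step worth highlighting is the cancellation of the quadratic terms upon subtracting $\psi$ from $\phi$; this is what converts the two squared-distance lower bounds into the scalar function-gap $\Delta f_i$, and discarding the extra nonnegative term $f_i(\hat{x}_i^{n,p(n,i)})-f_{i,n,p(n,i)}(\hat{x}_i^{n,p(n,i)})$ is what produces the clean inequality rather than an equality.
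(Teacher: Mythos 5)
Your proof is correct and is essentially the paper's own argument: the paper chains the two quadratic-growth (1-strong convexity) inequalities at the minimizers of $f_{i,n,p(n,i)}(\cdot)+\frac{1}{2}\|\cdot-p_{i}\|^{2}$ and $f_{i}(\cdot)+\frac{1}{2}\|\cdot-p_{i}\|^{2}$ together with $f_{i,n,p(n,i)}(\cdot)\leq f_{i}(\cdot)$ in a single string of inequalities \eqref{eq:norm-and-sqrt}, which is exactly your ``add the two growth bounds, cancel the quadratics, discard the nonnegative bracket'' computation written in a different order. No gap.
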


\begin{proof}
Since $[\mathbf{x}^{n,p(n,i)}]_{i}$ is the minimizer of $f_{i,n,p(n,i)}(\cdot)+\frac{1}{2}\|\cdot-p_{i}\|^{2}$,
where $p_{i}$ is as in \eqref{eq:small-sub-dual}, and $\hat{x}_{i}^{n,p(n,i)}$
is the minimizer of $f_{i}(\cdot)+\frac{1}{2}\|\cdot-p_{i}\|^{2}$,
we have 
\begin{eqnarray}
 &  & \begin{array}{c}
f_{i,n,p(n,i)}([\mathbf{x}^{n,p(n,i)}]_{i})+\frac{1}{2}\|[\mathbf{x}^{n,p(n,i)}]_{i}-p_{i}\|^{2}+\|[\mathbf{x}^{n,p(n,i)}]_{i}-\hat{x}_{i}^{n,p(n,i)}\|^{2}\end{array}\nonumber \\
 & \leq & \begin{array}{c}
f_{i,n,p(n,i)}(\hat{x}_{i}^{n,p(n,i)})+\frac{1}{2}\|\hat{x}_{i}^{n,p(n,i)}-p_{i}\|^{2}+\frac{1}{2}\|[\mathbf{x}^{n,p(n,i)}]_{i}-\hat{x}_{i}^{n,p(n,i)}\|^{2}\end{array}\nonumber \\
 & \leq & \begin{array}{c}
f_{i}(\hat{x}_{i}^{n,p(n,i)})+\frac{1}{2}\|\hat{x}_{i}^{n,p(n,i)}-p_{i}\|^{2}+\frac{1}{2}\|[\mathbf{x}^{n,p(n,i)}]_{i}-\hat{x}_{i}^{n,p(n,i)}\|^{2}\end{array}\nonumber \\
 & \leq & \begin{array}{c}
f_{i}([\mathbf{x}^{n,p(n,i)}]_{i})+\frac{1}{2}\|[\mathbf{x}^{n,p(n,i)}]_{i}-p_{i}\|^{2}.\end{array}\label{eq:norm-and-sqrt}
\end{eqnarray}
Rearranging inequality \eqref{eq:norm-and-sqrt} and using \eqref{eq:def-delta-z-i}
gives us what we need. 
\end{proof}
In view of Lemma \ref{lem:grad-sqrt-bdd} and the fact that $\mathbf{z}_{\alpha}^{n,p(n,\alpha)}=\hat{\mathbf{z}}_{\alpha}^{n,p(n,\alpha)}$
for all $\alpha\in V_{1}\cup V_{2}\cup V_{3}\cup\bar{E}$, we have
\begin{subequations}\label{eq_m:two-sum-bdds}
\begin{eqnarray}
 &  & \begin{array}{c}
\bar{\mathbf{t}}_{\alpha}-\hat{\mathbf{t}}_{\alpha}\overset{\eqref{eq:big-t-bar},\eqref{eq:partial-subdiff}}{=}\underset{i\in V_{4}\backslash\{\alpha\}}{\overset{\phantom{i\in V_{4}\backslash\{\alpha\}}}{\sum}}[\mathbf{z}_{i}^{n,\bar{w}}-\hat{\mathbf{z}}_{i}^{n,\bar{w}}]\end{array}\label{eq:two-sum-bdds-a}\\
 & \mbox{ and } & \begin{array}{c}
\|\mathbf{v}_{A}^{n,\bar{w}}-\hat{\mathbf{v}}_{A}^{n,\bar{w}}\|\overset{\eqref{eq_m:all_acronyms},\eqref{eq:v-hat}}{\leq}\underset{i\in V_{4}}{\overset{\phantom{i\in V_{4}}}{\sum}}\|\mathbf{z}_{i}^{n,\bar{w}}-\hat{\mathbf{z}}_{i}^{n,\bar{w}}\|.\end{array}\label{eq:two-sum-bdds-b}
\end{eqnarray}
\end{subequations}Moreover, 
\begin{equation}
\begin{array}{c}
\underset{i\in V_{4}\backslash\{\alpha\}}{\overset{\phantom{i\in V_{4}\backslash\{\alpha\}}}{\sum}}\|\mathbf{z}_{i}^{n,\bar{w}}-\hat{\mathbf{z}}_{i}^{n,\bar{w}}\|\overset{\eqref{eq:def-delta-z-i}}{\leq}\underset{i\in V_{4}}{\overset{\phantom{i\in V_{4}}}{\sum}}\|\Delta z_{i}\|\overset{\eqref{eq:gradient-sqrt-bdd}}{\leq}\underset{i\in V_{4}}{\overset{\phantom{i\in V_{4}}}{\sum}}\sqrt{\Delta f_{i}}.\end{array}\label{eq:v-hat-leashed}
\end{equation}

If $\Delta f_{i}$ were arbitrarily large, then Lemma \ref{lem:subdiff-decrease}(1)
would contradict the fact that the dual objective value is monotonically
nonincreasing. Lemma \ref{lem:grad-sqrt-bdd} then shows that $\Delta z_{i}$
is bounded. 

Next, Theorem \ref{thm:convergence}(iii) shows that $\mathbf{z}_{i}^{n,p(n,i)}$
is bounded for all $i\in V_{3}\cup V_{4}$. Along with the fact that
$\Delta z_{i}$ is bounded for all $i\in V_{4}$, we have $\hat{\mathbf{z}}_{i}^{n,p(n,i)}$
being bounded for all $i\in V_{4}$. 

Let $h^{n,w}=-F^{n,0}(\mathbf{z}^{n,0})-(-F(\mathbf{z}^{*}))$, and
let $h^{n}$ be defined by 
\begin{equation}
h^{n}:=h^{n,0}=-F^{n,0}(\mathbf{z}^{n,0})-(-F(\mathbf{z}^{*})).\label{eq:def-h-n-values}
\end{equation}
Note that $h^{n}\geq0$. From the fact that $\mathbf{z}_{\alpha}^{n,p(n,\alpha)}\overset{\eqref{eq:stagnant-indices}}{=}\mathbf{z}_{\alpha}^{n,\bar{w}}$,
we have 
\begin{eqnarray}
\|\mathbf{t}_{\alpha}-\bar{\mathbf{t}}_{\alpha}\| & \overset{\eqref{eq:partial-subdiff},\eqref{eq:big-t-bar},\eqref{eq:stagnant-indices}}{=} & \begin{array}{c}
\bigg\|\underset{\beta\in V\cup\bar{E}}{\overset{}{\sum}}[\mathbf{z}_{\beta}^{n,p(n,\alpha)}-\mathbf{z}_{\beta}^{n,\bar{w}}]\bigg\|\end{array}\label{eq:t-norm-bound}\\
 & \leq & \begin{array}{c}
\underset{w=p(n,\alpha)+1}{\overset{\bar{w}}{\sum}}\bigg\|\underset{\beta\in V\cup\bar{E}}{\overset{}{\sum}}[\mathbf{z}_{\beta}^{n,w-1}-\mathbf{z}_{\beta}^{n,w}]\bigg\|\end{array}\nonumber \\
 & \overset{\eqref{eq_m:all_acronyms}}{\leq} & \begin{array}{c}
\underset{w=1}{\overset{\bar{w}}{\sum}}\|\mathbf{v}_{A}^{n,w-1}-\mathbf{v}_{A}^{n,w}\|\leq\sqrt{\bar{w}\underset{w=1}{\overset{\bar{w}}{\sum}}\|\mathbf{v}_{A}^{n,w-1}-\mathbf{v}_{A}^{n,w}\|^{2}}\end{array}\nonumber \\
 & \overset{\scriptsize{\mbox{Thm \ref{thm:convergence}(i)}}}{\leq} & \begin{array}{c}
\sqrt{\bar{w}}\sqrt{h^{n}-h^{n+1}}.\end{array}\nonumber 
\end{eqnarray}

\begin{thm}
\label{thm:main-conv-rate}Suppose that a dual optimizer $\mathbf{z}^{*}$
exists for \eqref{eq:general-dual}. Suppose that the dual iterates
$\{\mathbf{z}_{\alpha}^{n,w}\}$ are bounded for all $\alpha\in V\cup\bar{E}$,
$w\in\{1,\dots,\bar{w}\}$ and $n\geq1$. Then we have the recurrence
\[
h^{n+2}\leq h^{n}-\gamma[h^{n+2}]^{4}.
\]
for some $\gamma>0$, which together with Lemma \ref{lem:Beck-recur}
shows that the dual function value converges at a rate of $O(1/n^{1/3})$.
Moreover, if $V_{4}=\emptyset$, we have the recurrence $h^{n+1}\leq C_{1}\sqrt{\bar{w}}\sqrt{h^{n}-h^{n+1}}$
for some $C_{1}>0$, which shows that the dual function value converges
at a rate of $O(1/n)$. 
\end{thm}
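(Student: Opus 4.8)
The plan is to bound $h^{n+1}$ from above by a combination of $\sqrt{h^n-h^{n+1}}$ and the minorant errors $\sqrt{\Delta f_i}$, then to charge the error terms against the one-step dual decrease $h^{n+1}-h^{n+2}$; it is this one-step delay between the error appearing and its being resolved that forces the two-step recurrence $h^{n+2}\le h^n-\gamma[h^{n+2}]^4$. First I would record the key identity $h^{n+1}=h^{n,\bar w}$: by line 16 and the fact that line 5 leaves $\mathbf{v}_H$ unchanged while keeping $\mathbf{z}_\alpha^{n,\bar w}\in H_\alpha^\perp$ (so each $\delta^*_{H_\alpha}(\mathbf{z}_\alpha)=0$ is preserved), and by line 17 transferring the minorants, one has $F^{n+1,0}(\mathbf{z}^{n+1,0})=F^{n,\bar w}(\mathbf{z}^{n,\bar w})$. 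Then I would split
\[
h^{n+1}=h^{n,\bar w}=\bigl[-F^{n,\bar w}(\mathbf{z}^{n,\bar w})+F(\hat{\mathbf{z}}^{n,\bar w})\bigr]+\bigl[-F(\hat{\mathbf{z}}^{n,\bar w})+F(\mathbf{z}^*)\bigr],
\]
bounding the first bracket by \eqref{eq:subdiff-part-2} and the second by \eqref{eq:subdiff-for-h}.

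Next I would estimate each term. Using that all iterates are bounded (so $\|\mathbf{z}_\alpha^*-\hat{\mathbf{z}}_\alpha^{n,\bar w}\|$, $\|[\mathbf{x}^{n,p(n,i)}]_i-[\mathbf{z}_i^{n,p(n,i)}]_i\|$, $\|\bar{\mathbf{x}}-\mathbf{v}_A^{n,\bar w}\|$ and the local Lipschitz constants $L_i$ are all at most some $C$), together with \eqref{eq:two-sum-bdds-a}, \eqref{eq:two-sum-bdds-b}, \eqref{eq:v-hat-leashed}, \eqref{eq:gradient-sqrt-bdd} and \eqref{eq:t-norm-bound}, every summand is controlled either by $\sum_{i\in V_4}\sqrt{\Delta f_i}$ or by $\sqrt{\bar w}\sqrt{h^n-h^{n+1}}$. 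Because $\Delta f_i$ is bounded, the leftover linear terms $\Delta f_i$ and the quadratic $\|\mathbf{v}_A^{n,\bar w}-\hat{\mathbf{v}}_A^{n,\bar w}\|^2$-type contributions are absorbed into the $\sqrt{\Delta f_i}$ terms, yielding a constant $C_8$ with
\[
h^{n+1}\le C_8\Bigl[\sum_{i\in V_4}\sqrt{\Delta f_i}+\sqrt{h^n-h^{n+1}}\Bigr].
\]

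The crux is to charge $\sum_i\sqrt{\Delta f_i}$ against genuine progress, and I expect this to be the main obstacle. Here Assumption \ref{assu:to-start-subalg}(3) ($S_{n+1,1}=V_4$) ensures each $i\in V_4$ is re-linearized at the first inner step of iteration $n+1$; crucially, line 16 together with Assumption \ref{assu:to-start-subalg}(2) gives $[\mathbf{x}^{n+1,0}]_i=[\mathbf{x}^{n,p(n,i)}]_i$, so the gap driving that refinement is \emph{exactly} the same $\Delta f_i$ appearing in the bound above, with no prox-center mismatch — this matching is the delicate point. Applying Lemma \ref{lem:subdiff-decrease}(1) at each node, and using boundedness of $\Delta f_i$ to replace $\min\{\Delta f_i,(\Delta f_i)^2\}$ by $c_6(\Delta f_i)^2$, the separability of the subalgorithm over the disjoint coordinates of $V_4$ gives $\sum_{i\in V_4}c_6(\Delta f_i)^2\le h^{n+1}-h^{n+2}$. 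Two applications of Cauchy--Schwarz (or the power-mean inequality) then produce $\sum_i\sqrt{\Delta f_i}\le C_{11}(h^{n+1}-h^{n+2})^{1/4}$.

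Finally I would combine. Substituting into the displayed bound and using the monotonicity $h^{n+2}\le h^{n+1}$, the inequalities $h^{n+1}-h^{n+2}\le h^n-h^{n+2}$ and $h^n-h^{n+1}\le h^n-h^{n+2}$, and the boundedness $h^n-h^{n+2}\le h^1$ (so the $1/4$-power dominates the $1/2$-power), I obtain $h^{n+2}\le C_{12}(h^n-h^{n+2})^{1/4}$, that is, $h^{n+2}\le h^n-\gamma[h^{n+2}]^4$ with $\gamma=C_{12}^{-4}$; Lemma \ref{lem:Beck-recur} then converts this into the $O(1/n^{1/3})$ rate. For the case $V_4=\emptyset$ everything simplifies: $\hat{\mathbf{z}}^{n,\bar w}=\mathbf{z}^{n,\bar w}$ and $\bar{\mathbf{t}}_\alpha=\hat{\mathbf{t}}_\alpha$, so the first bracket and all $\Delta f_i$ terms vanish and only the $\|\mathbf{t}_\alpha-\bar{\mathbf{t}}_\alpha\|$ contribution survives, giving $h^{n+1}\le C_1\sqrt{\bar w}\sqrt{h^n-h^{n+1}}$; squaring yields $[h^{n+1}]^2\le C_1^2\bar w(h^n-h^{n+1})$, which is the standard recurrence forcing the $O(1/n)$ rate.
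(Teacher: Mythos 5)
Your proposal is correct and follows essentially the same route as the paper: the same decomposition of $h^{n+1}$ via \eqref{eq:subdiff-for-h} and \eqref{eq:subdiff-part-2}, the same absorption of the $\Delta f_i$ terms into $\sqrt{\Delta f_i}$ using boundedness, and the same charging of $\sum_i(\Delta f_i)^2$ against $h^{n+1}-h^{n+2}$ via Lemma \ref{lem:subdiff-decrease}(1) together with Assumption \ref{assu:to-start-subalg}(2)--(3), which is indeed the delicate matching point you identify. The only (harmless) difference is cosmetic: you merge the two contributions into a single bound $h^{n+2}\leq C_{12}(h^{n}-h^{n+2})^{1/4}$ using that the $1/4$-power dominates the $1/2$-power on a bounded range, whereas the paper splits into two cases according to which of $\sqrt{h^{n}-h^{n+1}}$ and $\sum_i\sqrt{\Delta f_i}$ dominates; both yield the same recurrence $h^{n+2}\leq h^{n}-\gamma[h^{n+2}]^{4}$.
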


\begin{proof}
We make the following claim:

\textbf{Claim: }Throughout Algorithm \ref{alg:Ext-Dyk}, the quantities
$\mathbf{v}_{A}^{n,w}$, $\mathbf{x}^{n,w}$ and $\hat{\mathbf{v}}_{A}^{n,w}$
are bounded. 

We recall that Theorem \ref{thm:convergence}(ii) implies that $\{\mathbf{v}_{A}^{n,w}\}$
is bounded. Since $\mathbf{x}^{n,w}\overset{\eqref{eq:x-from-v-A}}{=}\bar{\mathbf{x}}-\mathbf{v}_{A}^{n,w}$,
$\{\mathbf{x}^{n,w}\}$ is also bounded. From Lemma \ref{lem:subdiff-decrease}(1)
and the nonincreasingness of $\{-F(\mathbf{z}^{n,w})\}$ through Theorem
\ref{thm:convergence}(i), we deduce that $\Delta f_{i}$ is bounded.
From \eqref{eq:v-hat-leashed} and \eqref{eq:two-sum-bdds-b}, we
can deduce that $\{\hat{\mathbf{v}}_{A}^{n,w}\}$ is bounded.  $\hfill\triangle$

Note that $\{\mathbf{z}_{\alpha}^{n,w}\}$ is assumed to be bounded
for all $\alpha\in V\cup\bar{E}$ in the theorem statement. Combining
with the claim above with \eqref{eq_m:two-sum-bdds}, \eqref{eq:v-hat-leashed}
and \eqref{eq:t-norm-bound} shows us that there are nonnegative constants
$C_{1}$, $C_{2}$ and $C_{3}$ such that 
\begin{eqnarray}
h^{n+1} & \overset{\eqref{eq:def-h-n-values}}{=} & -[F^{n,\bar{w}}(\mathbf{z}^{n,\bar{w}})-F(\hat{\mathbf{z}}^{n,\bar{w}})]-[F(\hat{\mathbf{z}}^{n,\bar{w}})-F(\mathbf{z}^{*})]\label{eq:recurrence}\\
 & \overset{\eqref{eq:subdiff-for-h},\eqref{eq:subdiff-part-2}}{\leq} & \begin{array}{c}
C_{1}\sqrt{\bar{w}}\sqrt{h^{n}-h^{n+1}}+\underset{i\in V_{4}}{\overset{\phantom{i\in V_{4}}}{\sum}}[C_{2}\Delta f_{i}+C_{3}\sqrt{\Delta f_{i}}].\end{array}\nonumber 
\end{eqnarray}
 We now address the last statement of the theorem first. When $V_{4}=\emptyset$,
the formula \eqref{eq:recurrence} is reduced to $h^{n+1}\leq C_{1}\sqrt{\bar{w}}\sqrt{h^{n}-h^{n+1}}$,
which shows that $h^{n}\geq h^{n+1}+\frac{1}{C_{1}^{2}\bar{w}}[h^{n+1}]^{2}$.
This recurrence would give us an $O(1/n)$ convergence rate from \cite[Lemma 3.5]{Beck_Tetruashvili_2013}. 

We now continue to proving our main result. Let $L=\max_{i\in V_{4}}L_{i}$.
From Lemma \ref{lem:subdiff-decrease}(1) being applied to all coordinates
$i$ in $V_{4}$, we have the constraints 
\begin{eqnarray}
 &  & \begin{array}{c}
h^{n+1,1}\leq h^{n+1,0}-\frac{1}{2}\underset{i\in V_{4}}{\overset{\phantom{i\in V_{4}}}{\sum}}\theta_{i}^{2},\end{array}\label{eq:recur-1}\\
 & \mbox{and} & \begin{array}{c}
\frac{1}{2(L+1)^{2}}\theta_{i}^{2}+\theta_{i}\geq f_{i}([\mathbf{x}^{n+1,0}]_{i})-f_{i,n,\bar{w}}([\mathbf{x}^{n+1,0}]_{i})\mbox{ for all }i\in V_{4}.\end{array}\label{eq:recur-2}
\end{eqnarray}
Due to Assumption \ref{assu:to-start-subalg}(2) and \eqref{eq:def-delta-f-i},
the right hand side of \eqref{eq:recur-2} is $\Delta f_{i}$. From
the fact that $\{h^{n,w}\}$ is nonincreasing and bounded from below,
there is some constant $C_{4}$ such that $\theta_{i}\leq C_{4}$.
So 
\begin{equation}
\begin{array}{c}
\Delta f_{i}\overset{\eqref{eq:recur-2}}{\leq}\theta_{i}+\frac{1}{2(L+1)^{2}}\theta_{i}^{2}\leq\theta_{i}\left(1+\frac{C_{4}}{2(L+1)^{2}}\right)\leq C_{4}\left(1+\frac{C_{4}}{2(L+1)^{2}}\right).\end{array}\label{eq:recur-3}
\end{equation}
Letting $C_{5}$ be $(1+\frac{C_{4}}{2(L+1)^{2}})$ and $C_{6}=C_{4}C_{5}$,
we have 
\begin{equation}
\begin{array}{c}
C_{2}\Delta f_{i}+C_{3}\sqrt{\Delta f_{i}}\leq\sqrt{\Delta f_{i}}\left(C_{2}\sqrt{C_{6}}+C_{3}\right).\end{array}\label{eq:recur-4}
\end{equation}
Also, 
\begin{equation}
\begin{array}{c}
h^{n+2}\overset{\eqref{eq:recur-1}}{\leq}h^{n+1}-\frac{1}{2}\underset{i\in V_{4}}{\overset{\phantom{i\in V_{4}}}{\sum}}\theta_{i}^{2}\overset{\eqref{eq:recur-3}}{\leq}h^{n+1}-\frac{1}{2C_{5}^{2}}\underset{i\in V_{4}}{\overset{\phantom{i\in V_{4}}}{\sum}}[\Delta f_{i}]^{2}\leq h^{n+1}-\frac{1}{2C_{5}^{2}|V_{4}|}\Big[\underset{i\in V_{4}}{\overset{\phantom{i\in V_{4}}}{\sum}}\Delta f_{i}\Big]^{2}.\end{array}\label{eq:recur-5}
\end{equation}
Let $C_{7}=C_{2}\sqrt{C_{6}}+C_{3}$. We have, 
\begin{eqnarray}
h^{n+1} & \overset{\eqref{eq:recurrence},\eqref{eq:recur-4}}{\leq} & \begin{array}{c}
C_{1}\sqrt{\bar{w}}\sqrt{h^{n}-h^{n+1}}+\underset{i\in V_{4}}{\overset{\phantom{i\in V_{4}}}{\sum}}C_{7}\sqrt{\Delta f_{i}}\end{array}\label{eq:recur-6}\\
 & \leq & \begin{array}{c}
C_{1}\sqrt{\bar{w}}\sqrt{h^{n}-h^{n+1}}+C_{7}\sqrt{|V_{4}|}\sqrt{\sum_{i\in V_{4}}\Delta f_{i}}.\end{array}\nonumber 
\end{eqnarray}
We now split into two cases to find a recurrence. 

\textbf{Case 1:} If $h^{n+1}\leq2C_{1}\sqrt{\bar{w}}\sqrt{h^{n}-h^{n+1}}$,
then $h^{n}\geq h^{n+1}+\frac{1}{4C_{1}^{2}\bar{w}}[h^{n+1}]^{2}$.
There is some $\bar{h}$ such that $h^{n}\leq\bar{h}$ for all $n$,
which gives 
\begin{equation}
\begin{array}{c}
h^{n}\geq h^{n+1}+\frac{1}{4C_{1}^{2}\bar{w}}[h^{n+1}]^{2}\ge h^{n+2}+\frac{1}{4C_{1}^{2}\bar{h}_{1}^{2}\bar{w}}[h^{n+2}]^{4}.\end{array}\label{eq:recur-8-0}
\end{equation}

\textbf{Case 2:} If $h^{n+1}\geq2C_{1}\sqrt{\bar{w}}\sqrt{h^{n}-h^{n+1}}$,
then 
\begin{equation}
\begin{array}{c}
\sqrt{\underset{i\in V_{4}}{\sum}\Delta f_{i}}\overset{\eqref{eq:recur-6}}{\geq}\frac{1}{C_{7}\sqrt{|V_{4}|}}[h^{n+1}-C_{1}\sqrt{\bar{w}}\sqrt{h^{n}-h^{n+1}}]\geq\frac{1}{2C_{7}\sqrt{|V_{4}|}}h^{n+1}.\end{array}\label{eq:recur-7}
\end{equation}
This gives 
\begin{equation}
\begin{array}{c}
h^{n+2}\overset{\eqref{eq:recur-5},\eqref{eq:recur-7}}{\leq}h^{n+1}-\frac{1}{32C_{5}^{2}|V_{4}|^{3}C_{7}^{4}}[h^{n+1}]^{4}\leq h^{n}-\frac{1}{32C_{5}^{2}|V_{4}|^{3}C_{7}^{4}}[h^{n+2}]^{4}.\end{array}\label{eq:recur-8}
\end{equation}
The recurrences \eqref{eq:recur-8-0} and \eqref{eq:recur-8} ensure
that the conclusion holds. 
\end{proof}
The following result is adapted from the techniques in \cite{Beck_Tetruashvili_2013,Beck_alt_min_SIOPT_2015}.
\begin{lem}
\label{lem:Beck-recur}Suppose that a nonnegative sequence $\{a_{k}\}_{k=1}^{\infty}$
has the recurrence $a_{k}\geq a_{k+1}+\gamma a_{k+1}^{4}$. Then $a_{k}\leq\left(\frac{1}{a_{1}^{3}}+(k-1)3\gamma\left(3\gamma a_{1}^{3}+1\right)^{-1}\right)^{-1/3}$
for all $k\geq1$, which means that $\{a_{k}\}_{k}$ has a $O(1/k^{1/3})$
rate of convergence. 
\end{lem}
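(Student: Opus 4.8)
The plan is to turn the additive decrease of $\{a_k\}$ into an additive \emph{increase} of the reciprocal cubes $b_k := a_k^{-3}$, and then telescope. First I would record the two elementary consequences of the hypothesis $a_k \ge a_{k+1}+\gamma a_{k+1}^4$: that the sequence is nonincreasing (so $0 \le a_{k+1}\le a_k \le a_1$ for every $k$), and the rearranged form $a_k-a_{k+1}\ge \gamma a_{k+1}^4$. We may assume $a_k>0$ for all $k$, since once a term vanishes all later terms vanish and the asserted bound is trivial. The goal is to show that $b_{k+1}-b_k \ge c$ for every $k$, where $c := \dfrac{3\gamma}{1+3\gamma a_1^3}$ is exactly the per-step increment appearing in the claimed estimate.

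The key inequality comes from the factorization $x^3-y^3=(x-y)(x^2+xy+y^2)\ge 3y^2(x-y)$, valid for $x\ge y\ge 0$. Applying it with $x=a_k$, $y=a_{k+1}$ and then invoking the recurrence gives
\begin{equation*}
b_{k+1}-b_k=\frac{a_k^3-a_{k+1}^3}{a_k^3 a_{k+1}^3}\ge \frac{3a_{k+1}^2(a_k-a_{k+1})}{a_k^3 a_{k+1}^3}=\frac{3(a_k-a_{k+1})}{a_k^3 a_{k+1}}\ge \frac{3\gamma a_{k+1}^3}{a_k^3}.
\end{equation*}
Thus everything reduces to bounding the ratio $a_{k+1}^3/a_k^3$ from below by $(1+3\gamma a_1^3)^{-1}$.

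The main obstacle is that the hypothesis is one-sided: it forces $a_{k+1}\le a_k$ but places no upper bound on the gap $a_k-a_{k+1}$, so $a_{k+1}/a_k$ can be arbitrarily small and the estimate $3\gamma a_{k+1}^3/a_k^3$ alone cannot be bounded below by a positive constant. I would resolve this with a two-case argument. If $a_{k+1}^3/a_k^3 \ge (1+3\gamma a_1^3)^{-1}$, the displayed estimate already yields $b_{k+1}-b_k \ge c$. Otherwise $a_{k+1}^3 < a_k^3/(1+3\gamma a_1^3)$, which rearranges to $b_{k+1}=a_{k+1}^{-3} > (1+3\gamma a_1^3)a_k^{-3}$, whence
\begin{equation*}
b_{k+1}-b_k > (1+3\gamma a_1^3)a_k^{-3}-a_k^{-3}=\frac{3\gamma a_1^3}{a_k^3}\ge 3\gamma \ge c,
\end{equation*}
using $a_k\le a_1$ in the penultimate step and $1+3\gamma a_1^3\ge 1$ in the last. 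In both cases $b_{k+1}-b_k\ge c$.

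Finally I would telescope to get $b_k \ge b_1+(k-1)c = a_1^{-3}+(k-1)\dfrac{3\gamma}{1+3\gamma a_1^3}$, and invert to obtain
\begin{equation*}
a_k=b_k^{-1/3}\le \left(\frac{1}{a_1^3}+(k-1)\frac{3\gamma}{1+3\gamma a_1^3}\right)^{-1/3},
\end{equation*}
which is the assertion. Since the bracketed quantity grows linearly in $k$, the right-hand side is $O(k^{-1/3})$, giving the stated convergence rate. I expect the only delicate point to be the case split above; the rest is routine.
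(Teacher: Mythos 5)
Your proposal is correct and follows essentially the same route as the paper: both telescope the reciprocal cubes $a_k^{-3}$, both use the factorization $a_k^3-a_{k+1}^3\ge 3a_{k+1}^2(a_k-a_{k+1})$ together with the recurrence to get $a_{k+1}^{-3}-a_k^{-3}\ge 3\gamma\,a_{k+1}^3/a_k^3$, and both handle the possibility of a small ratio $a_{k+1}^3/a_k^3$ by a second bound exploiting $a_k\le a_1$, with the dichotomy resolved at the same threshold $(1+3\gamma a_1^3)^{-1}$. Your explicit two-case split is just a repackaging of the paper's observation that the maximum of the two lower bounds is minimized at their crossing point, so the arguments coincide in substance.
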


\begin{proof}
We have  
\begin{eqnarray*}
\begin{array}{c}
\frac{1}{a_{k+1}^{3}}-\frac{1}{a_{k}^{3}}\end{array} & = & \begin{array}{c}
\frac{a_{k+1}^{3}-a_{k}^{3}}{a_{k+1}^{3}a_{k}^{3}}=\frac{(a_{k+1}-a_{k})(a_{k+1}^{2}+a_{k+1}a_{k}+a_{k}^{2})}{a_{k+1}^{3}a_{k}^{3}}\end{array}\\
 & \geq & \begin{array}{c}
\frac{\gamma a_{k+1}^{4}3a_{k+1}^{2}}{a_{k+1}^{3}a_{k}^{3}}\geq3\gamma\frac{a_{k+1}^{3}}{a_{k}^{3}}.\end{array}
\end{eqnarray*}
Another bound is 
\[
\begin{array}{c}
\frac{1}{a_{k+1}^{3}}-\frac{1}{a_{k}^{3}}=\frac{1}{a_{k+1}^{3}}\left(1-\frac{a_{k+1}^{3}}{a_{k}^{3}}\right)\geq\frac{1}{a_{1}^{3}}\left(1-\frac{a_{k+1}^{3}}{a_{k}^{3}}\right).\end{array}
\]
It is elementary to calculate that $\max\left\{ 3\gamma\frac{a_{k+1}^{3}}{a_{k}^{3}},\frac{1}{a_{1}^{3}}\left(1-\frac{a_{k+1}^{3}}{a_{k}^{3}}\right)\right\} $
has a minimum value of $\left(3\gamma a_{1}^{3}+1\right)^{-1}3\gamma$
attained at $\frac{a_{k+1}^{3}}{a_{k}^{3}}=\left(3\gamma a_{1}^{3}+1\right)^{-1}$.
Then 
\[
\begin{array}{c}
\frac{1}{a_{k}^{3}}\geq\frac{1}{a_{1}^{3}}+(k-1)3\gamma\left(3\gamma a_{1}^{3}+1\right)^{-1},\end{array}
\]
which gives us the required conclusion.
\end{proof}
As a corollary of Theorem \ref{thm:CQ-gives-bdd-iters}, we have the
following.
\begin{cor}
Suppose that the condition \eqref{eq:CQ} holds. Then the iterates
$\{\mathbf{z}_{i}^{n,w}\}_{i\in V}$ of Algorithm \ref{alg:Ext-Dyk}
are bounded, which implies that Theorem \ref{thm:main-conv-rate}
can be applied to show the $O(1/n^{1/3})$ convergence rate of Algorithm
\ref{alg:Ext-Dyk}. 
\end{cor}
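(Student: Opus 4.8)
The plan is to derive the result in two stages: first observe that the boundedness assertion is exactly what Theorem \ref{thm:CQ-gives-bdd-iters} already provides, and then verify that this is enough to meet the hypotheses of Theorem \ref{thm:main-conv-rate}. Indeed, Theorem \ref{thm:CQ-gives-bdd-iters} states precisely that condition \eqref{eq:CQ} forces $\{\mathbf{z}_i^{n,w}\}_{i\in V}$ to be bounded, so the first half of the corollary requires no further argument beyond citing it. The remaining task is to confirm the boundedness of the \emph{full} family $\{\mathbf{z}_\alpha^{n,w}\}_{\alpha\in V\cup\bar E}$ demanded by Theorem \ref{thm:main-conv-rate}, i.e.\ to add in the edge variables $\alpha\in\bar E$, since it is exactly these that enter through the factor $\|\mathbf{z}_\alpha^*-\hat{\mathbf{z}}_\alpha^{n,\bar w}\|$ in \eqref{eq:subdiff-for-h}.

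For the edge variables I would proceed as follows. Theorem \ref{thm:convergence}(ii) bounds $\mathbf{v}_A^{n,w}$, and hence by \eqref{eq_m:all_acronyms} the full sum $\sum_{\alpha\in V\cup\bar E}\mathbf{z}_\alpha^{n,w}$ is bounded; subtracting the node variables $\sum_{i\in V}\mathbf{z}_i^{n,w}$, which are bounded by the first stage, shows that $\mathbf{v}_H^{n,w}=\sum_{\alpha\in\bar E}\mathbf{z}_\alpha^{n,w}$ is bounded uniformly in $(n,w)$. To pass from boundedness of this sum to boundedness of each summand I would use the structure of the algorithm across a single outer iteration $n$: line 5 resets the edge variables so that $\|\mathbf{z}_\alpha^{n,0}\|\le C_{reg}\|\mathbf{v}_H^{n,0}\|$ (already bounded), and each subsequent update of an edge variable in \eqref{eq:Dykstra-min-subpblm} replaces it by a projection onto $H_\alpha^\perp$ of the prox center $\bar{\mathbf{x}}-\sum_{\beta\notin S_{n,w}}\mathbf{z}_\beta^{n,w-1}$, which is controlled by the bounded primal estimate $\mathbf{x}^{n,w}$. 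Since each $\alpha$ is touched at most $\bar w$ times per outer iteration, the edge variables stay bounded throughout, and with both families bounded the hypothesis of Theorem \ref{thm:main-conv-rate} is met and the $O(1/n^{1/3})$ rate follows at once.

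The main obstacle is precisely the passage from the bounded sum $\mathbf{v}_H^{n,w}$ to bounds on the individual edge variables: when a subproblem set $S_{n,w}$ bundles several edges whose orthogonal complements $H_\alpha^\perp$ are linearly dependent (for instance edges forming a cycle in Example \ref{exa:distrib-dyk}), the minimizer of \eqref{eq:Dykstra-min-subpblm} only pins down $\sum_{\alpha\in S_{n,w}\cap\bar E}\mathbf{z}_\alpha$, not the individual pieces, so cancellation could in principle let a single $\mathbf{z}_\alpha$ grow. I would handle this either by appealing to a minimum-norm selection of the decomposition together with the Lemma \ref{lem:express-v} constant $C_{reg}$, or, in the typical implementation where each edge is processed in its own singleton subproblem, by the clean projection estimate above, which makes each per-step increment bounded and its accumulation over the at most $\bar w$ updates harmless.
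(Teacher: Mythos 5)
Your proposal matches the paper's own treatment of this corollary for the part the paper actually writes down: the paper supplies no proof beyond the phrase ``As a corollary of Theorem \ref{thm:CQ-gives-bdd-iters}'', so the intended argument is exactly your first stage --- condition \eqref{eq:CQ} gives boundedness of $\{\mathbf{z}_i^{n,w}\}_{i\in V}$ by that theorem (implicitly in the setting of Example \ref{exa:distrib-dyk}, where that theorem lives), and then Theorem \ref{thm:main-conv-rate} is invoked. Where you go beyond the paper is in noticing that Theorem \ref{thm:main-conv-rate} is stated under boundedness of the \emph{entire} family $\{\mathbf{z}_\alpha^{n,w}\}_{\alpha\in V\cup\bar{E}}$ --- the edge variables enter individually through $\|\mathbf{z}_\alpha^*-\hat{\mathbf{z}}_\alpha^{n,\bar{w}}\|$ in \eqref{eq:subdiff-for-h} and through the leave-one-out sums $\mathbf{t}_\alpha$, $\bar{\mathbf{t}}_\alpha$ --- whereas Theorem \ref{thm:CQ-gives-bdd-iters} only delivers the node variables. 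Your bridge is sound: Theorem \ref{thm:convergence}(ii) bounds $\mathbf{v}_A^{n,w}$, subtracting the now-bounded node variables bounds $\mathbf{v}_H^{n,w}$, the reset \eqref{eq:reset-z-i-j-3} bounds the edge variables at $w=0$ via $C_{reg}$, and an induction over the at most $\bar{w}$ inner steps, using that a singleton edge update in \eqref{eq:Dykstra-min-subpblm} is the projection onto $H_\alpha^\perp$ of the bounded point $\mathbf{x}^{n,w-1}+\mathbf{z}_\alpha^{n,w-1}$, bounds them for all $w$. The caveat you raise about a non-singleton $S_{n,w}$ containing edges with linearly dependent $\{H_\alpha^\perp\}$ is legitimate --- \eqref{eq:Dykstra-min-subpblm} then pins down only $\sum_{\alpha\in S_{n,w}\cap\bar{E}}\mathbf{z}_\alpha$ --- but it is resolved exactly as you suggest: fix a bounded selection of the decomposition (e.g.\ via the constant of Lemma \ref{lem:express-v}), and nothing in the convergence analysis depends on which selection is made. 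In short, your proof is correct, takes the same route as the paper, and in addition makes explicit the edge-variable boundedness that the paper leaves silent.
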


\section{Numerical experiments}

We present our numerical experiments. Since a distributed optimization
algorithm is designed to handle the distributed nature of the data
and keeping the communications between the nodes low, a distributed
algorithm would converge less quickly than a comparable centralized
algorithm. So we aim only to verify the theoretical rates obtained
in this paper.

Since the distributed Dykstra's algorithm extends the averaged consensus
algorithm, the kind of graph that the distributed Dykstra's algorithm
does best in is one where the degree of each node is relatively high
so that each node can actively seek neighbors to average their primal
variable with (which occurs when $S_{n,w}$ is the edge connecting
the two nodes). Nevertheless, we are keeping our experiments simple
by looking at the graph where $|V|=5$ and $E=\{\{1,2\},\{1,3\},\{1,4\},\{1,5\}\}$.
We look at the setting of Example \ref{exa:distrib-dyk} where $X_{i}=\mathbb{R}^{m}$
and $m=4$ for all $i\in V$, and look at halfspaces of the form 
\[
H_{(i,j)}=\{\mathbf{{x}}\in\mathbf{{X}}:[\mathbf{x}]_{i}=[\mathbf{\mathbf{x}}]_{j}\}
\]
instead of the halfspaces $H_{((i,j),k)}$ defined in \eqref{eq:H-alpha-subspaces}
to simplify computations. Let $\mathbf{e}$ be \texttt{ones(m,1)}.
First, we find $\{v_{i}\}_{i\in V}$ and $\bar{x}$ such that $\sum_{i\in V}v_{i}+|V|(\mathbf{e}-\bar{x})=0$.
We then find closed convex functions $f_{i}(\cdot)$ such that $v_{i}\in\partial f_{i}(\mathbf{e})$.
It is clear from the KKT conditions that $\mathbf{e}$ is the primal
optimum solution to \eqref{eq:distrib-dyk-primal-pblm} if $[\bar{\mathbf{x}}]_{i}$
are all equal to $\bar{x}$ for all $i\in V$. 

The $f_{i}(\cdot)$ can be defined as either smooth or nonsmooth functions,
or as the indicator functions of level sets of smooth or nonsmooth
functions. They are described using some Matlab functions below. 
\begin{itemize}
\item [(F-S)]$f_{i}(x):=\frac{1}{2}x^{T}A_{i}x+b_{i}^{T}x+c_{i}$, where
$A_{i}$ is of the form $vv^{T}+rI$, where $v$ is generated by \texttt{rand(m,1)},
$r$ is generated by \texttt{rand(1)}. $b_{i}$ is chosen to be such
that $v_{i}=\nabla f(\mathbf{e})$, and $c_{i}=0$. 
\item [(F-NS)]$f_{i}(x):=\max\{f_{i,1}(x),f_{i,2}(x)\}$, where $f_{i,j}(x):=\frac{1}{2}x^{T}A_{i}x+b_{i,j}^{T}x+c_{i,j}$
for $j\in\{1,2\}$, $A_{i}$ is of the form $vv^{T}+rI$, where $v$
is generated by \texttt{rand(m,1)}, $r$ is generated by \texttt{rand(1)},
$b_{i,1}$ and $b_{i,2}$ are chosen such that $v_{i}=\frac{1}{2}[\nabla f_{i,1}(\mathbf{e})+\nabla f_{i,2}(\mathbf{e})]$
but $v_{i}$ is neither $\nabla f_{i,1}(\mathbf{e})$ nor $\nabla f_{i,2}(\mathbf{e})$,
and $c_{i,1}$ and $c_{i,2}$ are chosen such that $f_{i,1}(\mathbf{e})=f_{i,2}(\mathbf{e})$. 
\end{itemize}
Our code is equivalent to $\bar{w}=8$ with 
\begin{align*}
 & S_{n,1}=\{(1,2)\},S_{n,2}=\{1,2\},S_{n,3}=\{(1,3)\},S_{n,4}=\{1,3\},\\
 & S_{n,5}=\{(1,4)\},S_{n,6}=\{1,4\},S_{n,7}=\{(1,5)\},\text{ and }S_{n,8}=\{1,5\}.
\end{align*}

From the analysis in \cite{Pang_Dist_Dyk,Pang_sub_Dyk} (which traces
its origins to \cite{Gaffke_Mathar}), the duality gap is bounded
from below by 
\begin{equation}
0\leq\frac{1}{2}\|\mathbf{x}^{n,w}-\mathbf{x}^{*}\|^{2}\leq\underbrace{\frac{1}{2}\|\mathbf{x}^{*}-\bar{\mathbf{x}}\|^{2}+\sum_{i\in V}\mathbf{f}_{i}(\mathbf{x})-F^{n,w}(\{\mathbf{z}_{\alpha}\}_{\alpha\in\bar{E}\cup V})}_{\scriptsize{\text{duality gap modeling \eqref{eq:def-h-n-values}}}}.\label{eq:num_exp_ineq}
\end{equation}
We will keep track of the values of $\frac{1}{2}\|\mathbf{x}^{n,w}-\mathbf{x}^{*}\|^{2}$
and the duality gap as marked. Note that the duality gap is monotonically
nonincreasing. 

We now report on the results of the numerical experiments, starting
with the case of smooth functions and see the effect of treating the
smooth functions $f_{i}(\cdot)$ as subdifferentiable functions (i.e.,
being in $V_{4}$) and as proximable functions (i.e., being in $V_{1}$).
The theory in our paper suggests linear convergence, which was observed.
One might expect that if we treat the $f_{i}(\cdot)$ as proximable
functions, the dual objective value in \eqref{eq:h-def} or its lower
estimate \eqref{eq:h-def-1} converges to the optimal value faster.
While this is mostly true, we have encountered settings where treating
the smooth functions as subdifferentiable can give faster decrease
in the dual objective value. We ran our experiments 40000 times, and
in 5002 times, treating the smooth function as a subdifferentiable
function results in a lower duality gap by the $200$th iteration.
This is illustrated in the first diagram in Figure \ref{fig:the-fig}. 

We now look at the nonsmooth case. For the case when we treat the
functions as subdifferentiable functions, a plot of the duality gap
over the iterations shows that the convergence rate of the duality
gap to zero is $O(1/n)$, which coincides with the theory. (See Figure
\ref{fig:the-fig}, 4th diagram.) We make two observations that cannot
be predicted by our theory so far. The first observation we see is
that $\frac{1}{2}\|\mathbf{x}^{n,w}-\mathbf{x}^{*}\|^{2}$ apparently
converges to zero at the rate of $O(1/n^{2})$. (Equivalently, $\|\mathbf{x}^{n,w}-\mathbf{x}^{*}\|$
converges to zero at the rate of $O(1/n)$. See Figure \ref{fig:the-fig},
5th diagram.) The next observation is that when we treat the functions
as proximable, the duality gap and the distance to the optimal solution
converges linearly to zero. We ran more than 300 experiments, and
found that this linear convergence always holds. (See Figure \ref{fig:the-fig},
2nd diagram.)

\begin{figure}
\begin{raggedright}
\includegraphics[scale=0.3]{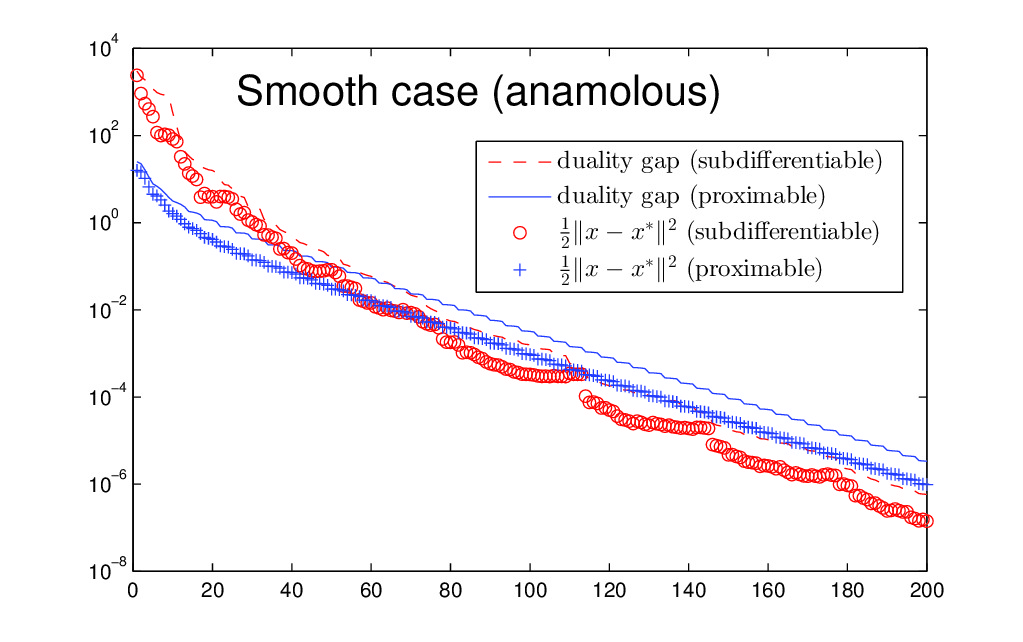}$\!\!\!\!\!\!\!$\includegraphics[scale=0.3]{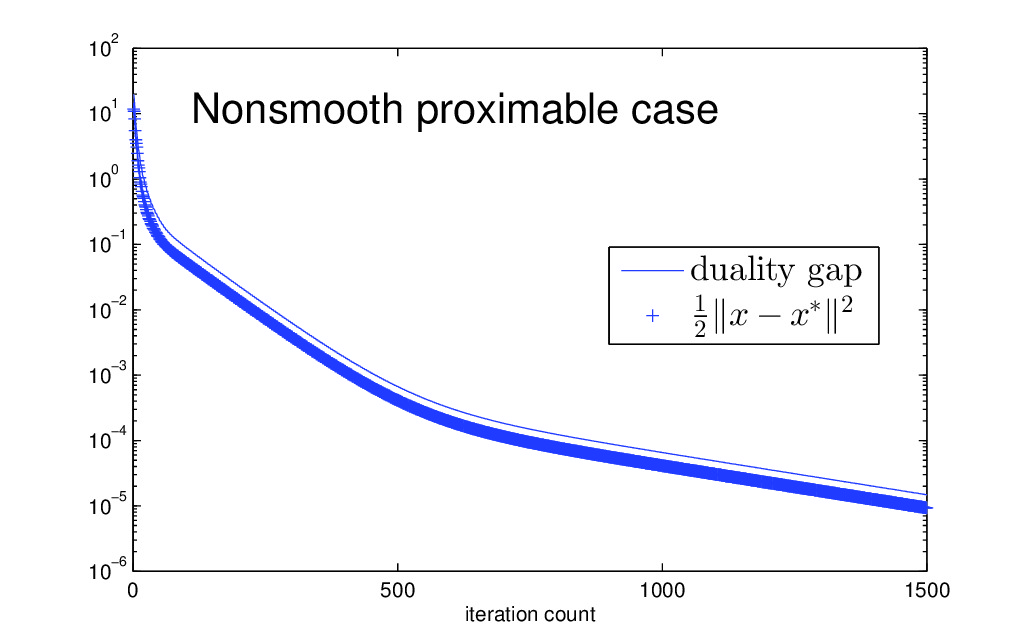}$\!\!\!\!\!\!\!$\includegraphics[scale=0.3]{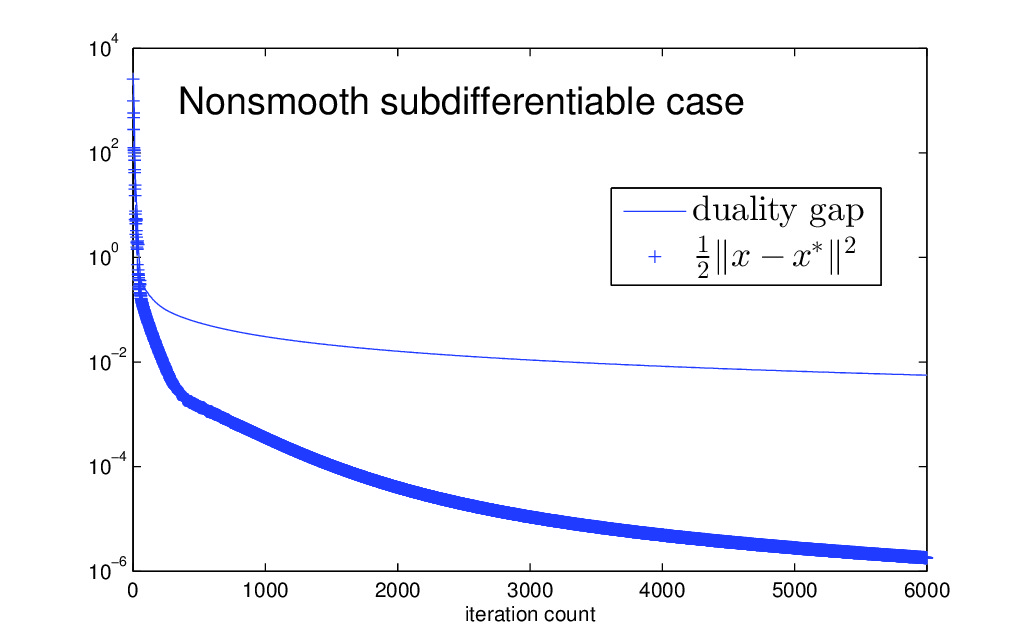}
\par\end{raggedright}
\begin{raggedright}
\includegraphics[scale=0.3]{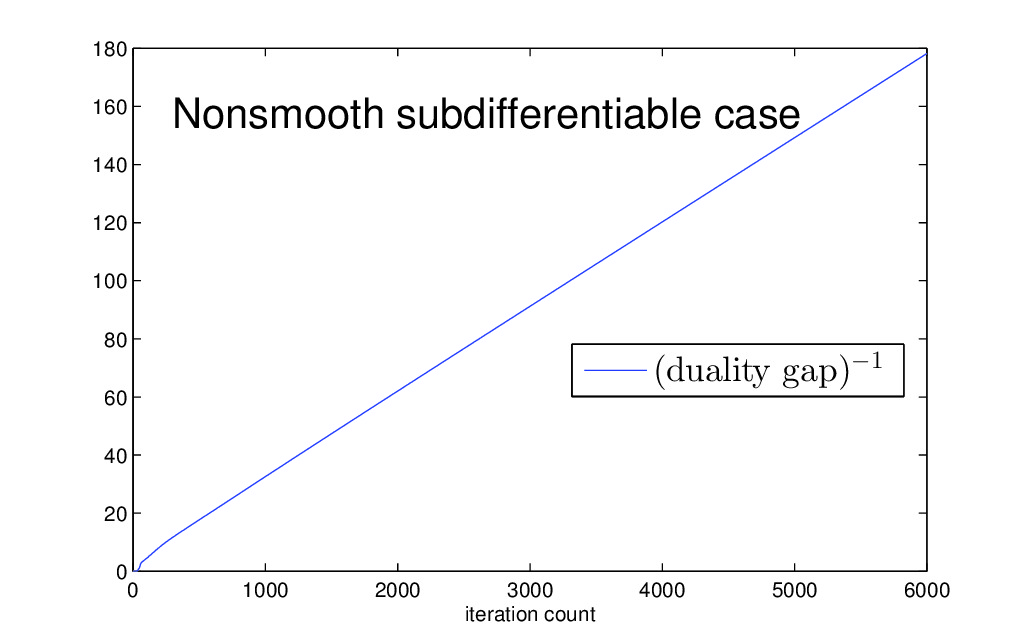}$\!\!\!\!\!\!\!$\includegraphics[scale=0.3]{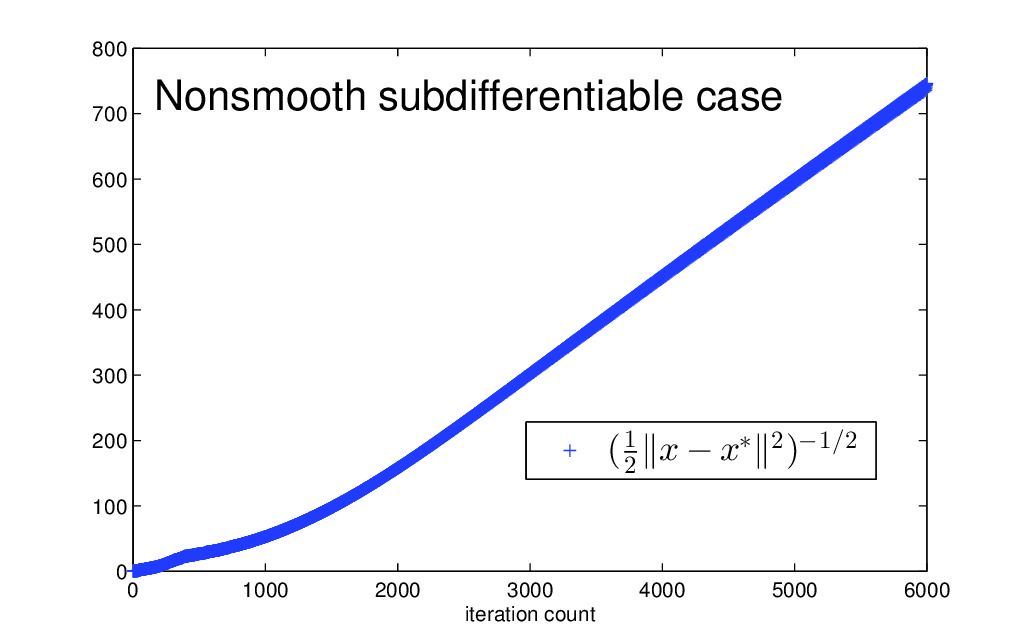}$\!\!\!\!\!\!\!$\includegraphics[scale=0.3]{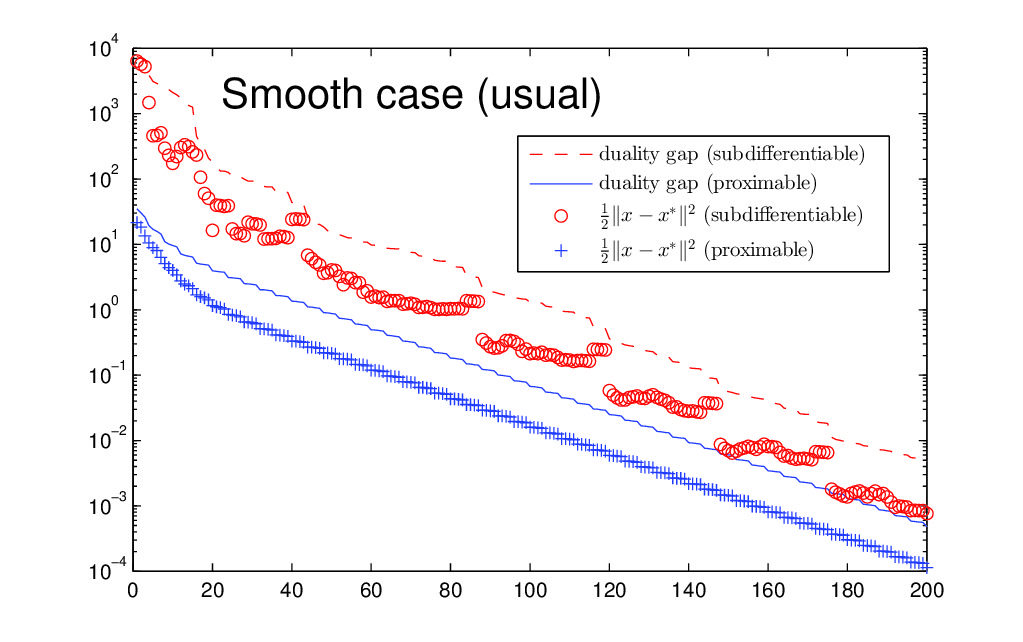}
\par\end{raggedright}
\caption{\label{fig:the-fig}This figure illustrates a sample run of our numerical
experiments. The first diagram shows the rate of linear convergence
for the smooth case when all (smooth) functions are treated as subdifferentiable
functions and when all functions are treated as proximable functions.
In this case, we see the anomalous case of when the algorithm actually
runs faster when the functions are treated as subdifferentiable. (The
last diagram shows what usually happens.) The second and third diagrams
are for the nonsmooth case, and illustrate the linear convergence
rate (at a much slower rate than the smooth case) when all functions
are treated as proximable, and the sublinear convergence rate for
when all functions are treated as subdifferentiable. The diagrams
suggest that the duality gap converges at a rate of $O(1/n)$ (as
is suggested by the theory) and $\frac{1}{2}\|\mathbf{x}-\mathbf{x}^{*}\|^{2}$
converges at a rate of $O(1/n^{2})$ (which is not covered by the
theory). }
\end{figure}

\section{Conclusion}

We proved what we have set out to do in Subsection \ref{subsec:Contributions}.
The linear convergence and $O(1/k)$ rates in Sections \ref{sec:Lin-conv}
and \ref{sec:O-1-k-conv-of-smooth-proximable-case} cannot be improved
to a faster rate, but it is unclear whether the $O(1/k^{1/3})$ rate
in Section \ref{sec:Sublin-conv} is optimal. Indeed, our numerical
experiments suggest a rate of $O(1/k)$, and there might be reasonable
conditions leading to the linear convergence observed for the nonsmooth
proximable case. These require further investigation.

\bibliographystyle{amsalpha}
\bibliography{../refs}

\end{document}